\DeclareMathAlphabet{\pazocal}{OMS}{zplm}{m}{n}
\newcommand{\C}{\mathcal{C}}
\newcommand{\MM}{\mathcal{M}}
\newcommand{\el}{\textit{l}}
\newcommand{\M}{{\rm\mathbf{M}}}
\def\f{\mathop{\mathfrak{f}}}
\DeclareMathOperator{\Z}{Z}
\DeclareMathOperator{\PP}{Path}
\DeclareMathOperator{\A}{Ass}
\DeclareMathOperator{\Imagen}{Im}
\DeclareMathOperator{\len}{length}
\DeclareMathOperator{\rad}{rad}
\DeclareMathOperator{\soc}{Soc}
\DeclareMathOperator{\lcm}{lcm}
\DeclareMathOperator{\sink}{Sink}
\DeclareMathOperator{\source}{Source}
\DeclareMathOperator{\ann}{Ann}
\def\remove#1{}
\newtheorem{lemma}{Lemma}[section]
\newtheorem{corollary}[lemma]{Corollary}
\newtheorem{theorem}[lemma]{Theorem}
\newtheorem{proposition}[lemma]{Proposition}
\newtheorem*{theorem*}{Theorem}
\theoremstyle{definition}
\newtheorem{remark}[lemma]{Remark}
\newtheorem{definition}[lemma]{Definition}
\newtheorem{example}[lemma]{Example}
\newtheorem{observation}[lemma]{Observation}
\title{Algebraic characterisations and structure of path algebras}
\author[Mart\'in]{Dolores Mart\'in Barquero} 
\address{D. Mart\'{\i}n Barquero: Departamento de Matem\'atica Aplicada, Escuela de Ingenier\'{\i}as Industriales, Universidad de M\'alaga, 
M\'alaga, Spain.}
\email{dmartin@uma.es}
\author[Mart\'in]{C\'andido Mart\'in Gonz\'alez} 
\address{C. Mart\'{\i}n Gonz\'alez:  Departamento de \'Algebra Geometr\'{\i}a y Topolog\'{\i}a, Fa\-cultad de Ciencias, Universidad de M\'alaga, 
M\'alaga, Spain.}
\email{candido\_m@uma.es}
\author[Ruiz]{Iv\'an Ruiz Campos}
\address{I. Ruiz Campos:  Departamento de \'Algebra Geometr\'{\i}a y Topolog\'{\i}a, Fa\-cultad de Ciencias, Universidad de M\'alaga, 
M\'alaga, Spain.}
\email{ivaruicam@uma.es}
\subjclass[2020] {16P20, 16P40, 16N60} 
\keywords{path algebra, finiteness conditions, perfection conditions, centroid, extended centroid, central closure, socle, Jacobson radical}
\begin{document}
\xdef\Rad{2}
\newcommand{\ARW}[2][]{%
    \foreach \ang in {#2}{%
        \draw[#1] (\ang:\Rad)--(\ang+1:\Rad) ;
    }
}

\maketitle
\begin{abstract}
In this work, we focus on studying the structure of path algebras over a field associated to arbitrary graphs. We characterise perfection (simplicity, primitivity, primeness and semiprimeness) and finitness conditions (Artinianity, semiartinianity and Noetherianity) in terms of geometric conditions in the associated graph. In order to do so, we also compute the socle and the Jacobson radical of a path algebra. In addition, we study the centroid of any path algebra and the extended centroid and central closure of the path algebra of a cycle. We obtain two structure theorems, one for semiprime path algebras, and another for Noetherian ones. Semiprime path algebras are direct sum of simple, prime and primitive algebras, and Noetherian path algebras modulo its radical will be isomorphic to upper triangular formal matrix algebras, they can also be seen as direct sums of path algebras of cycles and copies of the ground field itself. 
\end{abstract}

\section{Introduction}
The theory of path algebras is usually circunscripted to the study of representations of graphs. In this ground, path algebras play a preeminent roll usually linked to finite graphs. Our work plans to deal with path algebras by themselves, with their structure and finiteness/perfection properties. Finite graphs produce unital algebras so the realm of path algebras, in most studies focused to representation theory, is that of unital algebras. When one consider arbitrary graphs, possibly infinite, the corresponding path algebra is non unital and therefore we are taken to the nonunital setting, which requires some changes (in some cases the deviation from the classical arguments is deep). Concerning perfection properties we study semiprimeness, primeness and primitivity of path algebras. It turns out that any semiprime path algebras is a direct sum of simple, cycle path algebras and primitive ones. This justifies focusing on path algebras of cycles and on primitive ones.
On the other hand interesting finiteness  conditions that we study are Noetherianity and semiartianity. Noetherian path algebras have been characterized in terms of their graphs in \cite{quivlecs} but no study on their structure has been issued. These algebras turn out to be triangular and their structure modulo its radical is completely determined in this work. On the other hand semiartinian path algebras are related to the theory of ordinals, what makes of them a somewhat slippery soil.


In the literature of the so called \emph{finitness} and \emph{perfection} conditions in algebras associated to graphs, we can find  works like \cite{lpa}, \cite{quivlecs}, \cite{prop_semiprimo_mercedes}, \cite{paulsmith},  etc. The characterisation of Noetherian path algebras appears in \cite{quivlecs} for finite acyclic graphs (quivers). However, this characterisation is also valid for arbitrary graphs (Proposition \ref{prop_noetheriano}). 
Concerning semiprimeness, M. Siles in \cite[Proposition 2.1]{prop_semiprimo_mercedes} characterizes the semiprime character of $KE$, for a row finite graph $E$, taking advantage that this algebra can be embedded in the Leavitt Path algebra $L_K(E)$. Again, this result is also true for an arbitrary graph (in fact the proof in \cite{prop_semiprimo_mercedes} is also true in the general case, mutatis mutandis). We provide a new proof not relying on Leavitt theoretic arguments.
In \cite{paulsmith} we can also find the characterisation of a prime path algebra associated to a finite graph. Nevertheless, we think that it is necessary to extend the proof in order to ensure the result for arbitrary graphs. 
As far as we know the study of semiartinianity in the context of path algebras had not been addressed in the literature. The characterization of such path algebras take us to the very heart of the theory of ordinals (which might be the reason why these path algebras have not been considered previously).

In the path algebras context, these algebraic conditions are equivalent to some geometric characteristics in the corresponding graph. This allow us to easily construct a desired algebra verifying some of this finiteness and perfection conditions. Furthermore, some of them will be related. For example, semiprimeness is related with primitivity, primeness and simplicity. A semiprime path algebra will be a direct sum of prime, primitive and simple algebras.

This paper is organised as follows:  in Section \ref{sec_preliminares} we develop all the theoretic frame that will be needed. We define what is a collapse of a graph through vertices and edges and we describe the Peirce decomposition of a path algebra, in particular $KC_n$. In Section \ref{sec_centroide}, we compute the centroid of a path algebra of a connected graph (Theorem \ref{teo_centroide}) and, since the path algebra of a cycle with $n$ vertices is the only one with a nontrivial centroid, in Subsection \ref{susec_centroideextendido} we will study its extended centroid (Proposition \ref{prop_centroide_extendido_ciclo}) and its central closure (Theorem \ref{teorema_clausura_central_ciclo}),  following the construction given in \cite{baxter_central_closure}. We can make use of this construction because, as we will prove in Section \ref{sec_prime}, the path algebra of a cycle is prime (Proposition \ref{prop_caracterizacion_prima}) and it follows that all of its ideals are essential. We will characterise Artinian and semiartinian path algebras in Section \ref{sec_artinian} (Theorem \ref{prop_semiartiniana}). In order to do so, we will study the left and right socle of a path algebra. They will be related to the sources and sinks of the graph (Corollary \ref{corolario_zocalo_pathalgebra}). Furthermore, when we study the chain of socles we will generalise the concept of a source (resp. sink) of any ordinal $\alpha$ (Definition \ref{def_ifuente}). In addition, we will provide two examples of acyclic graphs, one which its corresponding path algebra is semiartinian (Figure \ref{fig_levigraph}) and other which its path algebra is not  (Figure \ref{fig_lineainfinita}). In Section \ref{sec_prime} we will focus on characterising semiprime, prime and primitive path algebras (Proposition \ref{prop_caracterizacion_semiprima}, Proposition \ref{prop_caracterizacion_prima} and Theorem \ref{teorema_primitiva}). These perfection conditions will be related to the type of connection the graph presents. In addition, this three conditions will be related as every semiprime path algebra is a direct sum of simple, primitive and prime (but not primitive) path algebras (Theorem \ref{teorema_estructura_semiprima}). Lastly, in Section \ref{sec_noetherian} we will prove that the path algebra of a cycle is Noetherian (Lemma \ref{lemma_ciclo_noetheriano}). We will make use of this result to prove the characterisation of a left (resp. right) Noetherian path algebra (Proposition \ref{prop_noetheriano}). Not only that, we will give an structure theorem that will define the behaviour of this type of algebras (Theorem \ref{teorema_structurenoetheriana}). 

\section{Preliminaries and basic results}\label{sec_preliminares}
A \emph{directed graph} consists of a $4$-tuple $E=(E^0, E^1, s_E, r_E)$ 
with $E^0$ and $E^1$ two disjoint sets and two maps $r_E, s_E: E^1 \to E^0$. The elements of $E^0$ are the \emph{vertices} and the elements of 
$E^1$ are the \emph{edges} of $E$.  Further, for $e\in E^1$, $r_E(e)$ and $s_E(e)$ are 
called the \emph{range} and the \emph{source} of $e$, respectively. If there is no confusion with respect to the graph we are considering, we simply write $r(e)$ and $s(e)$.

If $E$ is a directed graph, a vertex $v\in E^0$ such that $s^{-1}(v) = \emptyset$ is called a \emph{sink}. A vertex such that $r^{-1}(v) = \emptyset$ is a \emph{source}. We will denote by $\sink(E)$ and $\source(E)$ the set of sinks and sources of $E$ respectively. If $s^{-1}(v)$ is a finite set  for every vertex $v \in E^0$, then the graph is called \emph{row-finite}.

Let us consider a directed graph $E$. A \emph{path of length $n \geq 1$} in $E$ is a word $\mu = f_1f_2\cdots f_n$ with $f_i \in E^1$ for all $i = 1,\ldots,n$ and $r(f_i) = s(f_{i+1})$ for every $i = 1,\ldots, n-1$. The set of paths of length $n$ is denoted by $E^n$, with $E^0$ as the set of paths of length $0$, or \textit{trivial paths}, and $E^1$ as the set of paths of length $1$.

The set of all paths of $E$ is denoted by $\PP(E)$ and, as it is obvious, $\PP(E) = \bigsqcup_{n =0}^\infty E^n$.

Once we acknowledge the concept of path, we are able to define the source and the range of a path and some interesting sets we can consider in a certain path.

Let us consider a directed graph $E = (E^0,E^1,s,r)$ and a path $\mu = f_1f_2\cdots f_n \in \PP(E)$. We can extend the applications $s,r \colon \PP(E) \to E^0$ as
\begin{equation*}
\begin{split}
        & s(\mu) := s_E(f_1), \ r(\mu):= r_E(f_n) \ \text{if} \ {\rm length}(\mu)\geq 1 \\
        & s(u) = r(u) := u, \ u \in E^0.
\end{split}
\end{equation*}
And for a path $\mu$ we define the sets $\mu^{0} = \{s(f_1), r(f_i) \colon  1 \leq i \leq n\}$ and $ \mu^{(1)} = \{f_1,f_2,\ldots,f_n\}$.

In a directed graph there are different types of paths depending of whether it ends at the same vertex it has started or if all the vertices that the paths goes through are different. We will define them in the following. 

 Let $E$ be a directed graph.
\begin{enumerate}
    \item Let us consider a path $\mu = f_1f_2\cdots f_n \in \PP(E)$ with $n \geq 1$. If $s(\mu) = r(\mu) = v$, then $\mu$ is called a \emph{closed path based in $v$}.
    \item A \emph{simple path} is a path $\mu = f_1f_2\cdots f_n$ such that $s(f_i) \neq s(f_j) $ for $i\neq j$. 
    \item A \emph{closed simple path based at $v$} is a closed path $\mu =f_1f_2\cdots f_n$ based at $v$ such that $s(f_j) \neq v$ for every $j>1$.
    \item If $\mu = f_1f_2\cdots f_n$ is a closed path  based at $v$ with $s(f_i) \neq s(f_j)$ for $i \neq j$, then $\mu$ is a \emph{cycle based at} $v$. Notice that a cycle is a closed simple path based on any of its vertices, but a simple closed path based at a certain vertex $v$ may not be a closed simple path based at another vertex.
    \item Consider the path $\mu = f_1f_2\cdots f_n$, we say that $e$ is an \emph{exit} of $\mu$ if there is $i$ ($1\leq i \leq n$) with $s(f_i) = s(e)$ and $e \neq f_i$. On the other hand, it is possible to define the dual concept. We say that $e$ is an \emph{entry} of $\mu$ if there is a $j$ ($1\leq j\leq n$) with $r(f_j) = r(e)$ and $f_j \neq e$.
    \item A directed graph $E$ is a \emph{tree} if there is no closed paths in $\PP(E)$.
    \end{enumerate}

Considering a certain graph $E$, we can construct an associative $K$-algebra with $K$ a field from it.

Let us consider a directed graph $E$ and a field $K$. We define the \emph{path algebra of $E$}, denoted by $KE$, as the $K$-algebra generated by $\PP(E)$ as a $K$-vector space with the product defined as follows:
\begin{equation*}
\left \{
\begin{array}{ll}
     u\cdot v := \delta_{u,v}u, & \text{for $u,v \in E^0$}\\
     u \cdot f := \delta_{u,s(f)}f, &  \text{for $u \in E^0$ and $f\in E^1$}\\
     f \cdot v := \delta_{r(f),v}f, & \text{for $v \in E^0$ and $f\in E^1$} \\
     f \cdot e := \delta_{r(f),s(e)}fe, &  \text{for $f,e \in E^1$}
\end{array} \right .
\end{equation*}
with $\delta_{u,v} = 0$ if $u \neq v$ and $\delta_{u,v} = 1$ if $u=v$. In general, we will denote the product by yuxtaposition. 

Given a graph $E$ and its corresponding path algebra $KE$, we can obtain new path algebras by eliminating edges and vertices in the previous graph. This process of elimination is obtained thanks to a quotient by a certain ideal. The following result is not difficult to prove and it is well known
\begin{proposition} \label{prop_colapso}
    Let $E = (E^0,E^1,s,r)$ be a directed graph and $K$ a field. Consider a subset $X \subset E^0$ and the subgraph $F = (F^0,F^1,s,r)$ with $F^0 = E^0\setminus X$ and $F^1 = E^1 \setminus (s^{-1}(X) \cup r^{-1}(X))$. If $\left (X \right )$ is the two-sided ideal in $KE$ generated by the vertices in $X$, then $KE/(X) \cong KF$ as $K$- algebras. The graph $F$ is denoted as \emph{the collapse of $E$ through the set of vertices $X$}.

Consider the subset $Y \subset E^1$ and the subgraph $G = (G^0,G^1,s,r)$ with $G^0 = E^0$ and $G^1 = E^1 \setminus Y$. If $(Y)$ is the two-sided ideal in $KE$ generated by the set $Y$ of edges, then $KE / (Y) \cong KG$ as $K$-algebras. The graph $G$ is denoted as \emph{the collapse of $E$ through the set of edges $Y$}.
\end{proposition}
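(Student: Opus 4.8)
The plan is to exhibit, in each case, an explicit surjective $K$-algebra homomorphism out of $KE$ whose kernel is precisely the stated ideal, and then invoke the first isomorphism theorem. For the vertex collapse, define $\phi\colon KE \to KF$ on the path basis by $\phi(\mu)=\mu$ when $\mu\in\PP(F)$ and $\phi(\mu)=0$ otherwise, extended $K$-linearly. The elementary observation underlying everything is that a path $\mu$ of $E$ lies in $\PP(F)$ exactly when $\mu^0\cap X=\emptyset$: for a trivial path $u$ this says $u\notin X$, and for a path of positive length it is the condition that each of its edges avoids $s^{-1}(X)\cup r^{-1}(X)$. Since every path of $F$ is in particular a path of $E$ that $\phi$ fixes, $\phi$ is surjective.

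To see that $\phi$ is multiplicative it is enough to check $\phi(\mu\nu)=\phi(\mu)\phi(\nu)$ for basis paths $\mu,\nu$. If both $\mu,\nu\in\PP(F)$, then either $r(\mu)\neq s(\nu)$ and both sides vanish, or $r(\mu)=s(\nu)$, in which case the concatenation $\mu\nu$ is again a path of $F$ (its vertex set is $\mu^0\cup\nu^0$, which still avoids $X$), so both sides equal $\mu\nu$. If instead $\mu\notin\PP(F)$ (the case $\nu\notin\PP(F)$ is symmetric), then $\phi(\mu)=0$; and if $\mu\nu\neq 0$ in $KE$ the product is the concatenation, whose vertex set contains $\mu^0$ and hence still meets $X$, so $\phi(\mu\nu)=0$ as well. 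Thus $\phi$ is a $K$-algebra homomorphism.

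It remains to identify $\ker\phi$ with $(X)$. Since $\phi(u)=0$ for every $u\in X$ and $\ker\phi$ is a two-sided ideal, $(X)\subseteq\ker\phi$. Conversely, $\ker\phi$ is spanned by the basis paths $\mu$ with $\mu\notin\PP(F)$, i.e. those possessing a vertex $v\in\mu^0\cap X$. Such a $\mu$ factors in $KE$ as $\mu=\alpha v\beta$, where $\alpha$ is the initial segment of $\mu$ ending at $v$ and $\beta$ the terminal segment of $\mu$ starting at $v$ (one of them being the trivial path $v$ when $v=s(\mu)$ or $v=r(\mu)$); hence $\mu\in(X)$. Therefore $\ker\phi=(X)$, and $KE/(X)\cong KF$.

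The edge collapse is entirely parallel: define $\psi\colon KE\to KG$ by $\psi(\mu)=\mu$ if $\mu^{(1)}\cap Y=\emptyset$ and $\psi(\mu)=0$ otherwise. Exactly the same case analysis shows $\psi$ is a surjective $K$-algebra homomorphism, and a basis path using an edge $e\in Y$ factors as $\mu=\alpha e\beta$ with $\alpha,\beta$ the segments before and after that occurrence of $e$, so $\ker\psi=(Y)$ and $KE/(Y)\cong KG$. There is no genuine obstacle in this argument; the only points needing care are the routine bookkeeping that a subword of a path of $F$ (resp. $G$) is again such a path, and that every basis path lying outside $\PP(F)$ (resp. $\PP(G)$) visibly factors through a generator of the relevant ideal.
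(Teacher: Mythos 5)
Your proof is correct and complete. The paper itself does not prove Proposition \ref{prop_colapso} --- it is stated with the remark that it ``is not difficult to prove and it is well known'' --- so there is no in-paper argument to compare against; your route (the evaluation map that fixes paths of the subgraph and kills all others, checked to be a surjective algebra homomorphism with kernel exactly the stated ideal, followed by the first isomorphism theorem) is the standard one, and you correctly handle the two points where care is needed: that a concatenation of surviving paths survives, and that every killed basis path factors through a generator of the ideal.
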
 
Given a graph $E$ and a subset $X\subseteq E^0$ ( $Y \subseteq E^1$) we have constructed a particular subgraph $F$ of $E$ that allow us to see a quotient in a path algebra as the path algebra of the graph $F$. 

\begin{definition}\label{def_innertree}
Let $E$ be a directed graph and $A = KE$ the corresponding path algebra over the field $K$. If we consider $X \subset E^0$ the set of vertices that belong to any cycle in $\PP(E)$ we call the \emph{skeleton of $E$}, denoted by $
\Sigma(E) := E \setminus X$, to the collapse of $E$ through $X$.
\end{definition}
 If we consider a set $I$, we define the set of square matrices $\MM_{|I|}(A)$ with entries in $A$ as the square matrices with $|I|$ rows and columns and the elements are of the form $(a_{i,j})$ with $a_{i,j}\in A$ for every $i,j \in I$ and there is only a finite number of nonzero $a_{i,j}$.

If $E$ is a graph with a finite number of vertices, that is, $|E^0| < \infty$, then the path algebra $ KE$ over the field $K$ is unital. Not only that, but $E^0$ is a complete orthogonal system, $1 = \sum_{v \in E^0} v$ and $uv = \delta_{uv} v$. Thus, we can construct the Peirce decomposition of $A = KE$. If we order all the vertices $E^0 = \{v_1,v_2,\ldots,v_n\}$, then $ A = \bigoplus_{i,j = 1}^n A_{i,j}$ being $A_{i,j}:= v_i A v_j$. It is well known that each $A_{i,i}$ are subalgebras of $A$ and $A_{i,j}$ are  $(A_{i,i},A_{j,j})$-bimodules. These structures are compatible with a matrix product and then we can represent 
\begin{equation*}
    A = \left ( \begin{array}{cccc}
        A_{1,1} & A_{1,2} & \cdots & A_{1,n}  \\
        A_{2,1} & A_{2,2} & \cdots & A_{2,n} \\
        \vdots & \vdots & \ddots & \vdots \\
        A_{n,1} & A_{n,2} & \cdots & A_{n,n}
    \end{array}\right ).
\end{equation*}
Each $A_{i,j}$ is the vector space generated by all the paths connecting $v_i$ with $v_j$.  

The set $E^0$ is not the only complete orthogonal set of idempotent elements. If we take a partition of the vertices $E^0 = V_1 \sqcup V_2 \sqcup \cdots \sqcup V_m$ and define $e_1 = \sum_{v \in V_1}v, \cdots, e_m = \sum_{v \in V_m} v$.
As we mention before, this corresponds to a complete orthogonal system of $m$ idempotent elements. In this context, we can construct another Peirce decomposition
\begin{equation*}
      A = \left ( \begin{array}{cccc}
        B_{1,1} & B_{1,2} & \cdots & B_{1,m}  \\
        B_{2,1} & B_{2,2} & \cdots & B_{2,m} \\
        \vdots & \vdots & \ddots & \vdots \\
        B_{m,1} & B_{m,2} & \cdots & B_{m,m}
    \end{array}\right )
\end{equation*}
with $B_{i,j} := e_iAe_j$. This time $B_{i,j}$ is the vector space generated by all the paths that connect one vertex of $V_i$ to a vertex of $V_j$.

One of the applications of the Peirce decomposition of path algebras is the description of the path algebra of a cycle with an arbitrary number of vertices. This will allow us to understand it as a subalgebra of the matrix algebra over the ring of polynomials in one variable. Knowing the behaviour of this structure will be key when we prove the characterisation of the Noetherian path algebras as it will allow us, together with the Peirce decomposition, to study its radical.

\subsection{Cycle with \texorpdfstring{$n$}{n} vertices}\label{ejemplo_ciclo}
For the purpose of this subsection all the subscripts will be modulo $n$. Consider the graph $C_n$
with $C_n^0= \{v_0,v_1,\ldots,v_{n-1}\}$ and $C_n^1 = \{f_0, f_1, \ldots, f_{n-1}\}$ where $s(f_i) = v_i$ for $0 \leq i \leq n-1$, $r(f_i) = v_{i+1}$ for $0 \leq i \leq n-1$. Besides, we define the paths $\mu_{i,i} = v_i$, $\mu_{i,j} = f_if_{i+1} \cdots f_{j-1}$ and $c_i = f_if_{i+1} \cdots f_{i-1}$ for $0\leq i,j \leq n-1$. In this case, if we consider the $K$-algebra $A = KC_n$, we have that $A_{i,j} = K[c_i]\mu_{i,j}$
with $K[c_i]$ the polynomial algebra over the variable $c_i$. This leads us to
\begin{equation*}
    A = \left ( \begin{array}{ccccc}
        K[c_0] & K[c_0]\mu_{0,1} & \cdots & K[c_0]\mu_{0,n-2} &  K[c_0]\mu_{0,n-1} \\
        K[c_1]\mu_{1,0} & K[c_1] & \cdots & K[c_1]\mu_{1,n-2} &K[c_1]\mu_{1,n-1} \\
        K[c_2]\mu_{2,0} & K[c_2]\mu_{2,1} & \cdots & K[c_2]\mu_{2,n-2} & K[c_2]\mu_{2,n-1} \\
        \vdots & \vdots & \ddots & \vdots & \vdots \\
        K[c_{n-1}]\mu_{n-1,0} & K[c_{n-1}]\mu_{n-1,1} & \cdots & K[c_{n-1}]\mu_{n-1,n-2} &  K[c_{n-1}]
    \end{array}\right ).
\end{equation*}

We have the relation $c_if_i=f_ic_{i+1}$. This, immediately implies that
$p(c_i)f_i=f_ip(c_{i+1})$ for any polynomial $p$. And consequently, $\mu_{i,j}p(c_k)=\delta_{j,k}p(c_i)\mu_{i,j}$ where again $p$ is a polynomial and $\delta$ is the Kronecker's Delta.

Notice that for $i>0$ we have $\mu_{0,i}\mu_{i,0}=c_0$ and $\mu_{i,0}\mu_{0,i}=c_i$.
Furthermore:
$$\mu_{i,j}\mu_{j,k}=\begin{cases}\mu_{i,k} & \hbox{ if } i\le j\le k\\
c_i \mu_{i,k} & \hbox{ if } i\le k<j \hbox{ or } j<i\le k\hbox{ or } k<j<i.\\
\end{cases}$$
So, we claim that $\mu_{i,j}\mu_{j,k}=c_i^{n(i,j,k)}\mu_{i,k}$ where $n(i,j,k)=1$ if and only if 
$(i\le k<j)$ or $(j<i\le k)$ or $(k<j<i)$ and $n(i,j,k)=0$ otherwise.

The Peirce decomposition implies that $A = \bigoplus_{i,j=0}^{n-1}K[c_i]\mu_{i,j}$ and a general element is of the form $\sum_{i,j=0}^{n-1} p_{i,j}(c_i)\mu_{i,j}$. The product of two elements is then

\begin{equation*}
\begin{split}
\left (\sum_{i,j=0}^{n-1} p_{i,j}(c_i)\mu_{i,j} \right )\left (\sum_{i,j=0}^{n-1} q_{i,j}(c_i)\mu_{i,j} \right )& =\sum_{i,j,k,l=0}^{n-1}p_{i,j}(c_i)\mu_{i,j}q_{k,l}(c_k)\mu_{k,l}= \\
      = \sum_{i,j,k,l=0}^{n-1}\delta_{j,k}p_{i,j}(c_i)q_{k,l}(c_i)\mu_{i,j}\mu_{k,l} & =\sum_{i,j,l=0}^{n-1}p_{i,j}(c_i)q_{j,l}(c_i)\mu_{i,j}\mu_{j,l}= \\
     & = \sum_{i,j,l=0}^{n-1}p_{i,j}(c_i)q_{j,l}(c_i)c_i^{n(i,j,l)}\mu_{i,l}.
\end{split}
\end{equation*}

Define next the numbers $m(i,j)=1$ if $i>j$ and $m(i,j)=0$ else. Then enable us
to define $\tau\colon KC_n\to \MM_n(K[x])$ such that $\tau \left (\sum_{i,j}p_{i,j}(c_i)\mu_{i,j} \right ):=\sum_{i,j}p_{i,j}(x)x^{m(i,j)}E_{i,j}$ where $E_{i,j}$ is the usual elementary matrix. This map is a monomorphism of $K$-vector spaces. We reproduce here a table with the values of $n(i,j,k)$ and various $m(i,j)$'s for the possible scenarios of the triplet $(i,j,k)$:
\begin{center}
    \begin{tabular}{|c|c|c||c|c|}
    \hline
    & $n(i,j,k)$ & $m(i,k)$ & $m(i,j)$ & $m(j,k)$ \\
    \hline
    $i\le j\le k$ & $0$ & $0$ & $0$ & $0$\\
    $i\le k<j$ & $1$ & $0$ & $0$ & $1$\\
    $j\le i\le k$ & $1$ & $0$ & $1$ & $0$\\
    $j\le k<i$ & $0$ & $1$ & $1$ & $0$\\
$k\le i\le j$ & $0$ & $1$ & $0$ & $1$\\
$k\le j<i$ & $1$ & $1$ & $1$ & $1$\\
    \hline
    \end{tabular}
\end{center}

\noindent consequently, we have $n(i,j,k)+m(i,k)=m(i,j)+m(j,k)$ for any $i,j,k\in\{1,\ldots,n\}$. It can be proved that $\tau$ is a $K$-algebra monomorphism $\tau\colon KC_n\to \MM_n(K[x])$. Thus 
$$K C_n\cong \Imagen(\tau) = \left ( \begin{array}{ccccc}
        K[x] & K[x] & \cdots & K[x]&  K[x] \\
        xK[x] & K[x] & \cdots &K[x]&  K[x] \\
        xK[x] & xK[x] & \cdots & K[x] & K[x] \\
        \vdots & \vdots & \ddots & \vdots & \vdots  \\
        xK[x] & xK[x] & \cdots &  xK[x] & K[x]
    \end{array}\right ).
$$

    \section{Centroid of a path algebra, extended centroid and central closure of  \texorpdfstring{$KC_n$}{KCn}}\label{sec_centroide}

In this section we will focus our attention in the \emph{centroid} of a path algebra. Not only we will study its elements, but also we will describe its structure as an algebra. In relation to this, we will work with the \emph{center} of an algebra. Both concepts are really close and even are the same (in the sense of isomorphisms) when the algebra is unitary. However, when the graph have infinite vertices (that is, the path algebra is not unitary) we can find examples of algebras with trivial center and nontrivial centroid. That is why we are interested in this object, it generalise the center and coincides with it when the algebra is unitary. 
\begin{definition}
Consider a $K$-algebra $A$. The \emph{centroid} of $A$, denoted by $\C(A)$, is the $K$-vector space of all linear maps $T\colon A\to A$ such that $T(xy)=T(x)y=xT(y)$ for any $x,y\in A$. Furthermore, the centroid $\C(A)$ is a $K$-algebra.  The elements of $\C(A)$  will be called \emph{centralizers}.
\end{definition}
If $A$ is prime, it is known that $\C(A)$ is a domain. In addition, if $A$ is a simple algebra, then $\C(A)$ is a field.

Instead of speaking of general $K$-algebras, we can now focus on the path algebras context. If we consider $A = KE$ the path algebra of the graph $E$ over the field $K$ and choose $T \in \C(A)$. In order for this element to be a centralizer, it needs to verify the following equalities:
\begin{equation*}\label{eq_condicion_centralizable}
\left \{
\begin{array}{ll}
      T(u) = T(u)u = uT(u) &  \forall u\in E^0,\\
      0 = uT(v) = T(u)v & \forall u,v \in E^0,\  u\neq v,\\
      T(u)f= T(fv) = fT(v) & \forall f \in E^1 \text{ with } u = s(f), v = r(f).
\end{array} \right .
\end{equation*}
\begin{definition}
Let $E$ be a directed graph and $A = KE$ the corresponding path algebra over the field $K$. We say that an application $T_0 \colon E^0 \to KE$ is \emph{centralisable} if it verifies the conditions given in Equations \eqref{eq_condicion_centralizable}.
\end{definition}

It is not difficult to notice that if $T \in \C(A)$, then $\left .T \right \vert_{E^0}$ is centralisable. Furthermore, the converse is also true. Let $T_0 \colon E^0 \to KE$ be a centralisable application and define the linear application $T\colon KE \to KE$ over the paths as $T(\mu) = T_0(s(\mu))\mu$, then $T \in \C(A)$. In particular, there is a one to one relation between the set of elements of $\C(A)$ and the set of centralisable applications. In other words every centralizer $T \in \C(A)$ with $A$ a path algebra is characterised by its restriction over the set of vertices. 

 If $E$ is a directed graph with $\{E_i\}_{i\in I}$ its connected components we have that $KE = \bigoplus_{i\in I}KE_i$. As a direct consequence, it is true that $  \C(KE) \cong \bigoplus_{i\in I} \C(KE_i)$.
In conclusion, if we characterise the centroid of a path algebra with the corresponding graph $E$ connected, we will be able to describe the centroid of any path algebra known just by studying its connected components. 

\begin{remark}
\label{lema_anulador}
Let $E$ be a connected directed graph and $A = KE$ the path algebra over the field $K$. If we consider an edge $f\in E^1$ with $r(f)=v$ (resp. $s(f)=v$) Then $\text{\rm Rann}_{A}(f)\cap vA=0$ (resp. $\text{\rm Lan}_{A}(f)\cap Av=0$). This can be proven using \cite[Lemma 1]{centro_algebra_caminos}.
\end{remark}
\begin{proposition}\label{corollary_camino_identidad1}
Let $E$ be a connected directed graph and $A = KE$ the path algebra over the field $K$. Let take $u\in E^0$ and $T \in \C(A)$ verifying $T(u) = ku$ for some $k \in K$. If there is a path $\mu$ with $s(\mu) = u$ (resp. $r  (\mu) = u)$ and $r(\mu) = v \in E^0$ (resp. $s(\mu) = v)$, then $T(v)=kv$.
\end{proposition}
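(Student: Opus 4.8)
The plan is to reduce to the case of a single edge and then iterate along the path. First I would suppose $\mu = f_1 f_2 \cdots f_n$ is a path with $s(\mu) = u$ and $r(\mu) = v$, and set $w_0 = u$, $w_i = r(f_i)$ for $1 \le i \le n$, so that $w_n = v$ and each $f_i$ is an edge from $w_{i-1}$ to $w_i$. It suffices to prove the claim for a path of length one, i.e.\ for a single edge $f$ with $s(f) = u$ and $r(f) = v$; the general statement then follows by an obvious induction on $n$, applying the length-one case successively to $f_1, f_2, \ldots, f_n$. The dual statement (with $r(\mu) = u$) is handled symmetrically, using the right-annihilator version of Remark~\ref{lema_anulador}, so I would only write out one side.

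So assume $f \in E^1$ with $s(f) = u$, $r(f) = v$, and $T(u) = ku$. The key identity is the third line of the centralisability conditions in Equation~\eqref{eq_condicion_centralizable}: $T(u)f = T(fv) = fT(v)$. Since $T(fv) = T(f)$ (as $fv = f$) and also $T(f) = T(uf) = T(u)f$ because $T$ is a centralizer and $uf = f$, we get $fT(v) = T(u)f = (ku)f = k(uf) = kf$. Now write $T(v) = \sum_j \lambda_j \nu_j$ as a $K$-linear combination of distinct paths; then $fT(v) = kf$ together with $T(v) = vT(v)$ (first line of the conditions, so every $\nu_j$ with nonzero $\lambda_j$ satisfies $s(\nu_j) = v$) forces $f\bigl(T(v) - kv\bigr) = 0$, i.e.\ $T(v) - kv \in \mathrm{Rann}_A(f)$. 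But $T(v) - kv \in vA$ since $T(v) = vT(v)$ and $v = vv$. By Remark~\ref{lema_anulador}, $\mathrm{Rann}_A(f) \cap vA = 0$, hence $T(v) = kv$, as desired.

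The only real subtlety is making sure the membership $T(v) - kv \in vA$ is justified and that Remark~\ref{lema_anulador} is being applied in the correct direction (the edge $f$ has $r(f) = v$, which is exactly the hypothesis of that remark). Everything else is bookkeeping: the reduction to a single edge, the symmetric treatment of the "$r(\mu) = u$" case, and the fact that $T(u)f = T(u f) = (ku) f$ uses only that $T$ is a centralizer together with $u = s(f)$. I expect no genuine obstacle here; the proposition is essentially an unwinding of the defining relations of a centralizer combined with the faithfulness statement of Remark~\ref{lema_anulador}.
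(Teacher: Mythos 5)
Your proposal is correct and follows essentially the same route as the paper's own proof: reduce to a single edge, compute $T(f)=T(uf)=T(u)f=kf$ and $T(f)=T(fv)=fT(v)$ to get $kv-T(v)\in\mathrm{Rann}_A(f)\cap vA$, and conclude via Remark~\ref{lema_anulador}. The only difference is that you spell out the induction along the path and the justification that $T(v)\in vA$ slightly more explicitly, which the paper leaves as ``the rest follows directly.''
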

\begin{proof}
In order to prove this result, it is only necessary to verify this for edges, the rest follows directly. Also, the dual is completely analogous. Under the previous conditions, taking into account that $T \in \C(A)$, we can verify that $T(f) = T(uf) = T(u)f = kuf = kf = f(kv)$ and $T(f) = T(fv) = fT(v)$. As a consequence $f(kv - T(v)) = 0$, then $kv-T(v) \in {\rm Rann}_A(f)$. It is true that $T(v) = T(vv) = vT(v) \in vA$ and $v = vv \in vA$, then $kv-T(v) \in vA$. Finally $kv - T(v) \in {\rm Rann}_A(f) \cap vA$ and considering Remark \ref{lema_anulador} $kv-T(v) = 0$ and $T(v) = kv$.
\end{proof}
\begin{corollary}\label{theorem_conexo_ku_kId}
Let $E$ be a connected directed graph and $A = KE$ the path algebra over the field $K$. Consider $T \in \C(A)$, if there is some $u \in E^0$ that verifies $T(u) = ku$ for a $k \in K$, then $T = k \rm{Id}$.
\end{corollary}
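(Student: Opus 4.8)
The plan is to propagate the relation $T(w)=kw$ from the single vertex $u$ to every vertex of $E^{0}$, using that $E$ is connected, and then to conclude from the fact (established earlier in the text) that a centralizer of a path algebra is completely determined by its restriction to the set of vertices. Concretely, since $E$ is connected as an undirected graph, for any vertex $w\in E^{0}$ there is a finite sequence of vertices $u=u_{0},u_{1},\dots,u_{m}=w$ such that for each $i$ there is an edge $f_{i}\in E^{1}$ connecting $u_{i}$ and $u_{i+1}$ in one of the two directions, i.e. with either $s(f_{i})=u_{i},\ r(f_{i})=u_{i+1}$ or $s(f_{i})=u_{i+1},\ r(f_{i})=u_{i}$; equivalently, there is a path in $\PP(E)$ of length one joining $u_{i}$ and $u_{i+1}$.

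Next I would argue by induction on $i$ that $T(u_{i})=ku_{i}$. The base case $i=0$ is the hypothesis. For the inductive step, assume $T(u_{i})=ku_{i}$. If there is an edge $f$ with $s(f)=u_{i}$ and $r(f)=u_{i+1}$, then the path $\mu=f$ has $s(\mu)=u_{i}$, $r(\mu)=u_{i+1}$, and Proposition \ref{corollary_camino_identidad1} gives $T(u_{i+1})=ku_{i+1}$; if instead $s(f)=u_{i+1}$ and $r(f)=u_{i}$, the dual case of Proposition \ref{corollary_camino_identidad1} (applied with $r(\mu)=u_{i}$, $s(\mu)=u_{i+1}$) yields the same conclusion. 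Hence $T(w)=T(u_{m})=kw$ for every $w\in E^{0}$.

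Finally, recall that every $T\in\C(A)$ satisfies $T(\mu)=T\bigl(s(\mu)\,\mu\bigr)=T(s(\mu))\,\mu$ for each path $\mu\in\PP(E)$. Since $T(s(\mu))=k\,s(\mu)$, we obtain $T(\mu)=k\,s(\mu)\,\mu=k\mu$ for every $\mu\in\PP(E)$, and as $\PP(E)$ spans $A$ over $K$ this gives $T=k\,\mathrm{Id}$ by linearity. The only point requiring a little care — the "main obstacle", though it is a mild one — is the translation of connectedness of the directed graph into a vertex walk $u_{0},\dots,u_{m}$ along which Proposition \ref{corollary_camino_identidad1} and its dual can be invoked one step at a time; once this bookkeeping is in place the induction and the final spanning argument are immediate.
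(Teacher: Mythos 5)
Your proof is correct and follows exactly the route the paper intends: Proposition \ref{corollary_camino_identidad1} (in both its direct and dual forms) propagates $T(w)=kw$ along an undirected walk from $u$ to any vertex, and the earlier observation that a centralizer satisfies $T(\mu)=T(s(\mu))\mu$ then forces $T=k\,\mathrm{Id}$ on all of $A$. The paper leaves this argument implicit by labelling the statement a corollary, but your write-up supplies precisely the missing bookkeeping.
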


Before entering in the characterisation of the centroid, we need to recall the concept of \emph{unital associative algebra} generated by a certain set. This will allow us to study the corner $uAu$ in a different way. The following lemmas will be key in the characterisation of the centroid of the path algebra.

\begin{lemma}\label{prop_uAu_Ass}
 Let $E$ be a directed graph. Consider $u \in E^0$ and $X_u \subseteq \PP(E)$ the set of all paths $\lambda = f_1f_2\cdots f_n$ with $s(\lambda) = r(\lambda) = u$ and $r(f_i) \neq u$ for all $i=1,\ldots,n-1$. If $A = KE$ then $uAu \cong {\rm Ass}(X_u)$.
\end{lemma}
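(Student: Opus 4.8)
The statement asserts that the corner algebra $uAu$ is isomorphic to the free associative (non-unital, or unital — need to check which convention "$\mathrm{Ass}$" uses) algebra on the set $X_u$ of "first-return" closed paths at $u$. The natural strategy is to exhibit an explicit algebra homomorphism $\varphi\colon \mathrm{Ass}(X_u)\to uAu$ sending each generator $\lambda\in X_u$ to itself (viewed as an element of $uAu$), and then prove it is bijective. Surjectivity should be the easy direction, and injectivity (i.e. "no hidden relations") the direction requiring an argument about path concatenation.

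First I would set up notation: every element of $uAu$ is a $K$-linear combination of paths $\mu$ with $s(\mu)=r(\mu)=u$, together with the vertex $u$ itself (the trivial path), since $uAu$ is spanned by $u\cdot\mathrm{Path}(E)\cdot u$. The key combinatorial observation is a \emph{unique factorisation} statement: any nontrivial path $\mu$ with $s(\mu)=r(\mu)=u$ can be written uniquely as a concatenation $\mu=\lambda_1\lambda_2\cdots\lambda_m$ with each $\lambda_i\in X_u$. Indeed, reading $\mu=f_1f_2\cdots f_n$ from the left, let $i_1$ be the first index with $r(f_{i_1})=u$; then $f_1\cdots f_{i_1}\in X_u$ by definition of $X_u$, and the remainder $f_{i_1+1}\cdots f_n$ is again a closed path at $u$ (possibly trivial), so we recurse. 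Uniqueness is immediate because the decomposition is forced by the positions of the return-visits to $u$. This bijection between $\{$nontrivial closed paths at $u\} \cup \{u\}$ and the free monoid on $X_u$ (with $u$ corresponding to the empty word) is exactly what identifies $uAu$ with $\mathrm{Ass}(X_u)$: define $\varphi$ on the monomial basis of $\mathrm{Ass}(X_u)$ by sending a word $\lambda_{i_1}\cdots\lambda_{i_m}$ to the path obtained by concatenation (and the empty word to $u$, if $\mathrm{Ass}$ is unital with unit serving as $u$), extend linearly, and check multiplicativity — which holds because the product in $KE$ of two closed paths at $u$ is just their concatenation (no Kronecker delta kills it, since ranges and sources all equal $u$), matching the product in the free algebra.

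The main obstacle — really the only nontrivial point — is verifying that concatenation of the $\lambda_i$'s never collapses unexpectedly in $KE$, i.e. that $\lambda_{i_1}\lambda_{i_2}\cdots\lambda_{i_m}$ is genuinely a path (nonzero) and that distinct words give distinct paths; this is precisely the unique-factorisation claim above, so once that is nailed down, bijectivity of $\varphi$ follows and $\varphi$ is an algebra isomorphism. I would also double-check the edge cases: whether $X_u$ is empty (then $uAu\cong Ku$, matching $\mathrm{Ass}(\emptyset)$), and the role of the trivial path $u$ as the identity of $uAu$, which should correspond to the identity of the unital free algebra $\mathrm{Ass}(X_u)$ — consistent with the earlier remark in the paper that $uAu$ is a unital subalgebra (a "corner") even when $KE$ itself is non-unital.
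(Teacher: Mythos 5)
Your proposal is correct and follows essentially the same route as the paper: both define the map on generators by $\lambda\mapsto\lambda$ (and $1\mapsto u$), observe surjectivity, and reduce injectivity to the unique factorisation of closed paths at $u$ into first-return segments of $X_u$, which together with the linear independence of distinct paths in $KE$ yields the isomorphism. The paper's uniqueness argument (counting the visits to $u$ and comparing lengths of the leading factors) is just a spelled-out version of your observation that the factorisation is forced by the positions of the returns to $u$.
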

\begin{proof}
If we consider the morphism ${\rm Ass}(X) \to uAu$ as $1 \mapsto u$ and $\lambda \mapsto \lambda$. It is obvious that this morphism is surjective. In order to prove that it is injective, we consider any polynomial $p \in {\rm Ass}(X)$ with $\sigma(p) = 0$. In order to prove that $p = 0$, we need to verify that if $q = \lambda_1\lambda_2\cdots \lambda_n$ and $q'=\lambda'_1\lambda'_2\cdots \lambda'_m$ are two monomials such that $\sigma(q) = \sigma(q')$ then $q = q'$. If $\alpha = \lambda_1\lambda_2\cdots \lambda_n  = \lambda'_1\lambda'_2\cdots \lambda'_m = \beta$ (as paths) then these paths go through $u$ the same amount of times. The paths $\alpha $ crosses $n+1$ times, while $\beta$ crosses $m+1$ times, then $n = m$, because $\alpha = \beta$. Suppose that $\el(\lambda_1) \geq \el(\lambda'_1)$. As $\alpha = \beta$, this means that $\lambda_1 = \lambda'_1\gamma$, but $\lambda_1$ goes through $u$ twice while $\lambda'_1\gamma$ three times unless $\gamma = u$ and $\lambda_1 = \lambda'_1$. Then, we have that $\lambda_2\lambda_3\cdots \lambda_n = \lambda'_2\lambda'_3\cdots \lambda'_n$. Repeating this argument we have that $\lambda_i = \lambda'_i$ for $i=1,\ldots,n$ and $q = q'$. If $p = \sum_{i=1}^n a_iq_i$ with $q_i$ monomials in ${\rm Ass}(X)$ with $q_i \neq q_j$ and $a_i \neq 0$, we have that $\sigma(p) = \sum_{i=1}a_i \sigma(q_i)$ with $\sigma(q_i) \in \PP(E)$ and $\sigma(q_i) \neq \sigma(q_j)$ by the previous result. Finally, $\sigma(p) = 0$ if and only if $a_i = 0$ for every $i=1,\ldots,n$ and this implies that $p = 0$.
\end{proof}

\begin{lemma} \label{lemma_centro_Ass}
If $X$ is a set with $|X| > 1$, then $\Z(\A_K(X)) = K1$.
\end{lemma}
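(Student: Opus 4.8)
The plan is to show that the free associative (unital) $K$-algebra $\A_K(X)$ on a set $X$ with at least two distinct elements has trivial center by a direct degree/leading-monomial argument. Recall that $\A_K(X)$ has a $K$-basis consisting of all words (monomials) $x_{i_1}x_{i_2}\cdots x_{i_m}$ in the letters of $X$, including the empty word $1$, and multiplication is concatenation of words extended bilinearly. Since $K1 \subseteq \Z(\A_K(X))$ is clear, the whole content is the reverse inclusion.

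First I would pick two distinct elements $x, y \in X$ and take an arbitrary central element $z \in \Z(\A_K(X))$, written as a finite $K$-linear combination $z = \sum_{w} a_w\, w$ over words $w$, with $a_w \in K$ almost all zero. From $xz = zx$ I would compare the words appearing on both sides: every word on the left starts with the letter $x$, and every word on the right ends with $x$. Carefully, for any word $w$ with $a_w \neq 0$, the word $xw$ appears in $xz$ and hence must appear in $zx$, which forces $xw = w'x$ for some word $w'$ with $a_{w'} \neq 0$; iterating (or arguing by length, which is preserved) one deduces that every word $w$ occurring in $z$ is a power $x^{k}$ for some $k \geq 0$. Symmetrically, using $yz = zy$, every such word is a power of $y$. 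The only words that are simultaneously a power of $x$ and a power of $y$ (for $x \neq y$ distinct letters) are the empty word $1$; hence $z = a_1 1 \in K1$.

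The one point that needs a little care — and the place I would expect to do the only real work — is the passage from ``$xw = w'x$ for some occurring word $w'$'' to ``$w$ is a power of $x$''. I would organize this by induction on the length $\len(w)$ of $w$: length $0$ gives $w = 1 = x^0$; for the inductive step, $xw = w'x$ with $\len(w') = \len(w)$ forces $w$ to begin with $x$, say $w = xv$, and then $x(xv) = w'x$ gives $w' = xv'$ with $xv = v'x$, i.e.\ $v = v'$ has the same property and strictly smaller length, so $v = x^{k}$ and $w = x^{k+1}$. (Equivalently, one can match letters positionally: the $j$-th letter of $xw$ equals the $j$-th letter of $w'x$, and chasing indices shows all letters of $w$ equal $x$.) A symmetric induction handles the $y$ side. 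Everything else is bookkeeping about comparing coefficients of a basis of free words, so no further obstacle is anticipated.
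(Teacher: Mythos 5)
Your overall strategy (extract the two relations $xz=zx$ and $yz=zy$, show each one forces every word occurring in $z$ to be a power of the corresponding letter, and intersect) is sound and, incidentally, far more elementary than the paper's proof, which invokes the centralizer theorem for free algebras (${\rm C}(z)=K[z]$ for every $z$) and concludes via $\Z(\A_K(X))=\bigcap_z K[z]=K1$. However, the step you yourself single out as ``the only real work'' is carried out incorrectly, and the statement you reduce it to is false. From a single equation $xw=w'x$ between words of length $n+1$ one can only conclude that $w$ \emph{ends} in $x$, that $w'$ \emph{begins} with $x$, and that $w'=xw_1\cdots w_{n-1}$ is the right shift of $w=w_1\cdots w_n$; it does not follow that $w$ begins with $x$, nor that $w$ is a power of $x$. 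Concretely, $w=yx$ and $w'=xy$ satisfy $xw=xyx=w'x$, so your inductive step (``$xw=w'x$ forces $w=xv$'') already breaks at length $2$, and the positional letter-matching in your parenthetical yields only the shift relation, not $w=x^n$.

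The argument can be repaired, but only by using that $w'$ is again a word occurring in $z$ (with the same coefficient, since the words $xw$, respectively $wx$, are pairwise distinct basis elements), and then iterating. Writing $\sigma(w):=w'$, one has $\sigma^k(w)=x^k w_1\cdots w_{n-k}$, and each $\sigma^k(w)$ occurs in $z$; since every occurring word of length $n$ must end in $x$ (compare position $n+1$ of $xw=w'x$), applying this to $\sigma^k(w)$ for $k=0,\dots,n-1$ gives $w_{n-k}=x$ for every such $k$, i.e. $w=x^n$. With that replacement your proof closes: the symmetric argument with $y$ and the observation that $x^k=y^m$ forces $k=m=0$ give $z\in K1$. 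So the gap is localized but real: the single-pair induction is the wrong induction, and what actually drives the conclusion is the finiteness of the support of $z$ together with the stability of that support under the shift map.
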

\begin{proof}
In order to prove this result we define the centralizer of an element $z \in \A_K(X)$ as the set ${\rm C}(z) := \{a \in \A_K(X) \vert \ az = za  \}$. It is not difficult to see that $\Z(\A_K(X)) = \bigcap_{z \in \A_K(X)} {\rm C}(z)$. Following Theorem \cite[5.3]{centralizadores} we can affirm that ${\rm C}(z) = K[z]$ for every $z \in \A_K(X)$. As a consequence, $\Z(\A_K(X)) = \bigcap_{z \in \A_K(X)}K[z] = K1$.
\end{proof}
\begin{lemma}\label{lemma_centralizador_en_centro}
Let $E$ be a connected directed graph and $A = KE$ the path algebra over the field $K$. Consider $T: A \to A$ with $T \in \C(A)$. If $u \in E^0$, then $T(u) \in \Z(uAu)$.
\end{lemma}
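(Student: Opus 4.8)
The statement to prove is that for $T \in \C(A)$ and $u \in E^0$, the element $T(u)$ commutes with everything in the corner $uAu$. First I would record the basic identities that any centralizer satisfies on idempotents: since $u$ is idempotent, $T(u) = T(u\cdot u) = T(u)u = uT(u)$, so $T(u) \in uAu$ already. It therefore remains to show $T(u)$ is central \emph{within} $uAu$, i.e.\ $T(u)a = aT(u)$ for all $a \in uAu$.

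The plan is to reduce to a generating set of $uAu$ and then use the defining property $T(xy) = T(x)y = xT(y)$ repeatedly. By Lemma \ref{prop_uAu_Ass}, $uAu \cong \A_K(X_u)$, so $uAu$ is generated as a unital algebra by the paths $\lambda \in X_u$ (closed paths at $u$ not passing through $u$ internally), with $u$ itself playing the role of the unit. Hence it suffices to check $T(u)\lambda = \lambda T(u)$ for each such $\lambda$; commutation with arbitrary elements then follows by linearity and multiplicativity of the relation "$x$ commutes with $T(u)$". For a fixed $\lambda \in X_u$ I would compute $T(u)\lambda$ by feeding it through the centralizer identity along the factorisation $\lambda = u\lambda u$: namely $T(u)\lambda = T(u)(u\lambda) = T(u\cdot u\lambda)$? — more directly, $T(u)\lambda = T(u\lambda) = T(\lambda u) = \lambda T(u)$, where the outer equalities use $T(x)y = T(xy)$ and $xT(y) = T(xy)$ with $x = u$, $y = \lambda$ in one order and $x = \lambda$, $y = u$ in the other, and the middle equality is just $u\lambda = \lambda = \lambda u$ since $s(\lambda) = r(\lambda) = u$. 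This gives exactly $T(u)\lambda = \lambda T(u)$.

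The only genuine point needing care is that these manipulations take place \emph{inside} $uAu$ and that the isomorphism of Lemma \ref{prop_uAu_Ass} is compatible with them — but since that isomorphism sends $1 \mapsto u$ and $\lambda \mapsto \lambda$ and is an algebra map, $\Z(uAu)$ corresponds to $\Z(\A_K(X_u))$ and the computation above is intrinsic to $uAu$, so no subtlety arises. I do not expect a serious obstacle here: the whole content is the observation that $u$ acts as a two-sided identity on $uAu$, so the centralizer condition $T(u)a = T(ua) = T(au) = aT(u)$ holds for every $a \in uAu$ directly, without even needing to pass to generators. (The reference to Lemma \ref{prop_uAu_Ass} and Lemma \ref{lemma_centro_Ass} is presumably to set up the \emph{next} step, where one wants to conclude $T(u) \in K u$ when $X_u$ has more than one element; for the present lemma the identity-element argument alone suffices.)
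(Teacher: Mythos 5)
Your proof is correct and, once you strip away the unnecessary detour through the generating set $X_u$, it is exactly the paper's argument: $T(u)=T(uu)=T(u)u=uT(u)$ puts $T(u)$ in $uAu$, and for $a\in uAu$ the identities $T(u)a=T(ua)=T(au)=aT(u)$ give centrality directly, since $u$ is a two-sided identity on the corner. Your own closing observation that the generator reduction is not needed matches what the paper does.
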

\begin{proof}
Under the previous conditions, in order to prove $T(u) \in \Z(uAu)$ it suffices to show that $T(u) \in uAu$ and $T(u)a = aT(u)$ for every $a \in uAu$, which is immediate under the definition of an element of the centroid.
\end{proof}

Now we are in conditions to characterise the centroid of a path algebra in case that the graph is connected. We recall that we focus in this setting because the centroid commutes with the direct sum of the connected components of the graph. In \cite{quivlecs} W. Crawley gives a characterisation of the centroid for a finite graph. We will proof this result for arbitrary graphs.

\begin{theorem}\label{teo_centroide}
Let us consider $E$ a connected directed graph and $A = KE$ the corresponding path algebra over a field $K$. If $E$ is a cycle, then $\C(A) \cong K[x]$, in other case $\C(A) = K\rm{Id}_{A}$.
\end{theorem}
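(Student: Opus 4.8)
The plan is to prove the easy inclusion $K\,\mathrm{Id}_A\subseteq\C(A)$ for free, and to obtain the reverse characterisation by analysing an arbitrary $T\in\C(A)$ through its restriction to $E^{0}$. For $u\in E^{0}$ one has $T(u)=uT(u)u\in uAu$, and by Lemma~\ref{lemma_centralizador_en_centro} in fact $T(u)\in\Z(uAu)$; via Lemma~\ref{prop_uAu_Ass} this is $\Z({\rm Ass}(X_u))$. The first step is to dispose of every vertex with $|X_u|\neq 1$: if $X_u=\emptyset$ then $uAu=Ku$, while if $|X_u|\ge 2$ then $\Z({\rm Ass}(X_u))=K1=Ku$ by Lemma~\ref{lemma_centro_Ass}; in either case $T(u)=ku$ for some $k\in K$, and Corollary~\ref{theorem_conexo_ku_kId} forces $T=k\,\mathrm{Id}$. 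Since a cycle has $|X_u|=1$ at every vertex, this already handles every non-cycle $E$ that possesses a vertex with $|X_u|\neq 1$.

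It then remains to treat the case $|X_u|=1$ for every $u\in E^{0}$. Here I would first observe that the unique $\lambda_u\in X_u$ must be an honest cycle: if $\lambda_u=f_1\cdots f_n$ had a repeated internal range $r(f_i)=r(f_j)$ with $1\le i<j\le n-1$, then $f_1\cdots f_if_{j+1}\cdots f_n$ would be a strictly shorter member of $X_u$, a contradiction. So $u$ lies on the cycle $c:=\lambda_u$, and $|X_u|=1$ gives $uAu\cong{\rm Ass}(X_u)\cong K[t]$ with $t$ the closed path $c$ based at $u$; in particular $T(u)=p(t)$ for some polynomial $p$. If $E$ is itself a cycle, identify it with $C_n$ as in Subsection~\ref{ejemplo_ciclo}. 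Then $v_iAv_i=A_{i,i}=K[c_i]$, so $T(v_i)=p_i(c_i)$ for polynomials $p_i$; the centralizer identity on $f_i$ gives $p_i(c_i)f_i=f_iT(v_{i+1})=f_ip_{i+1}(c_{i+1})$, while the relation $p(c_i)f_i=f_ip(c_{i+1})$ gives $p_i(c_i)f_i=f_ip_i(c_{i+1})$, hence $f_i\bigl(p_i(c_{i+1})-p_{i+1}(c_{i+1})\bigr)=0$ and, by Remark~\ref{lema_anulador} with $r(f_i)=v_{i+1}$, $p_i=p_{i+1}$ as polynomials. Thus a single polynomial $p$ satisfies $T(v_i)=p(c_i)$ for all $i$; conversely every such assignment extends (via $T(\mu)=T_0(s(\mu))\mu$) to a centralizer, and since $T_p(c_i^{\,k})=p(c_i)c_i^{\,k}$ one checks $T_p\circ T_q=T_{pq}$, so $p\mapsto T_p$ is a $K$-algebra isomorphism $K[x]\xrightarrow{\ \sim\ }\C(A)$.

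Finally, the remaining configuration: $E$ connected, not a cycle, yet $|X_u|=1$ for all $u$. Fix $u$ with cycle $c=\lambda_u$. Since $c$ is a proper subgraph of the connected graph $E$, there is an edge $h\in E^{1}\setminus c^{(1)}$ incident to $c^{0}$ (otherwise no edge would join $c^{0}$ to its complement, forcing $E=c$). Say $s(h)=x\in c^{0}$ (the case $r(h)\in c^{0}$ is symmetric). Writing $T(x)=p(t)$ with $t=c$ based at $x$, and $T(r(h))=q(r)$ with $r$ the cycle through $r(h)$, the identity $T(x)h=T(h)=hT(r(h))$ reads $\sum_j a_j\,t^{j}h=\sum_k b_k\,hr^{k}$ where $p=\sum a_jx^{j}$, $q=\sum b_kx^{k}$. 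Every monomial $hr^{k}$ begins with the edge $h$, whereas for $j\ge 1$ the monomial $t^{j}h$ begins with the first edge of $c$ based at $x$, which lies in $c^{(1)}$ and therefore differs from $h$; comparing the distinct paths occurring on the two sides forces $a_j=b_k=0$ for $j,k\ge 1$ and $a_0=b_0$. Hence $p$ is constant, $T(x)\in Kx$, and Corollary~\ref{theorem_conexo_ku_kId} yields $T=k\,\mathrm{Id}$.

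I expect this last case to be the real obstacle. The ``corner''/centre argument is by itself insufficient there — it fails for configurations such as two disjoint cycles joined by a single edge, where every corner $uAu$ is a commutative polynomial ring and hence equals its own centre — so one genuinely has to exploit the defining identity of a centralizer on an edge leaving the cycle, which is precisely what the monomial comparison above does.
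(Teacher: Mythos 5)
Your proof is correct and follows essentially the same strategy as the paper: the case split on $|X_u|$, the corner/centre lemmas combined with Corollary~\ref{theorem_conexo_ku_kId}, and the path-comparison identity $T(u)f=fT(v)$ on an edge leaving a cycle in the remaining case. The only divergence is that for $E=C_n$ you compute $\C(KC_n)\cong K[x]$ directly (forcing all the $p_i$ to coincide via Remark~\ref{lema_anulador} and checking $T_p\circ T_q=T_{pq}$), whereas the paper invokes $\C(A)\cong\Z(A)$ for unital $A$ and cites the known description of $\Z(KC_n)$; your version is self-contained, and your first-edge comparison also avoids the paper's explicit reduction to disjoint cycles.
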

\begin{proof}
If we consider the sets $E^0$ and $X_u$ as in Lemma \ref{prop_uAu_Ass} for every $u \in E^0$, there are two possibilities. 
There is a vertex $u \in E^0$ such as $|X_u| \neq 1$. In this case we have another two possibilities.

If $|X_u| = 0$, that is, $X_u = \emptyset$ we have by Lemma \ref{prop_uAu_Ass} that  $uAu \cong K$. In particular, $uAu = Ku$. Consider $T \in \C(A)$, by Lemma \ref{lemma_centralizador_en_centro}, $T(u) \in \Z(uAu) = \Z(Ku) = Ku$. Then, $T(u) = ku$. Corollary \ref{theorem_conexo_ku_kId} implies that $T = k\rm{Id}$ and $\C(A) = K\rm{Id}$ as a consequence. 

In case that $|X_u|> 1$, we know from the Lemma \ref{lemma_centro_Ass} that $\Z(\A_K(X_u)) = K1$. In the same way as before, if $T \in \C(A)$, then $T(u) \in \Z(uAu) = Ku$ and $T(u) = ku$. We apply Corollary \ref{theorem_conexo_ku_kId} once more and we have that $T = k\rm{Id}$. This leads us to $\C(A) = K\rm{Id}$. 

Suppose that for all $u \in E^0$ we have that $|X_u| = 1$. In this case $uAu \cong \A_K(X_u) \cong K[x]$. In particular, $uAu = K[c_u]$ being $c_u$ the only closed path that starts and ends in $u$. In this case we have two more possibilities. 

Suppose that $E$ is not a cycle. Consider $u \in E^0$ and $c_u$ the closed path that starts in $u$ and ends in $u$. Without loss of generality, we can suppose that there is an edge $f$ that connects $u$ with  $v\not\in c_u^0$. Furthermore, there is another cycle $c_v$ that starts and ends in $v$. Otherwise, it would not be true that $|X_u| = 1$ for every $u \in E^0$.

If $c_u^0\cap c_v^0 \neq \emptyset$ it would not be true that $|X_u| = 1$ for every $u \in E^0$ because $w \in c_u^0 \cap c_v^0$ would verify $|X_w| \geq 2$. As a consequence $c_u^0\cap c_v^0 = \emptyset$. Without loss of generality we can suppose $s(f) = u$ y $r(f) = v$. Consider $T \in \C(A)$. From Lemma \ref{lemma_centralizador_en_centro} we have that $T(u) \in \Z(uAu) = K[c_u]$ and $T(u) = k_0 u + \sum_{i = 1}^n k_i c_u^i$ with $k_i \in K$. On the other hand, we have that $T(v) = l_0v + \sum_{j = 1}^m l_j c_v^j$. If we are familiar with the properties of the elements of $\C(A)$ we have that $T(f) = T(uf) = T(u)f = k_0 f + \sum_{i=1}^n k_i c_u^if$ and $T(f) = T(fv) = fT(v) = l_0 f + \sum_{j=1}^ml_i fc_v^j$ with $l_j \in K$. Thus
\begin{equation*}
            (k_0-l_0)f + \sum_{i=1}^nk_ic_u^if - \sum_{j = 1}^m l_j fc_v^j = 0.
\end{equation*}

From previous hypothesis we have that all the paths implied in this last linear combination are different, and so $k_0 = l_0 = k$, $k_i = 0$ for every $i = 1, \ldots,n$ and $l_j = 0$ for all $j = 1,\ldots,m$. This allows us to prove that $T(u) = ku$ and using again Corollary \ref{theorem_conexo_ku_kId}, we have that $T = k \rm{Id}$. This give us $\C(A) = K\rm{Id}$. 

The last possibility is that $E$ is a cycle $c$ with a finite number of vertices and this implies that $\C(A) \cong \Z(A)$. Following \cite[Theorem 1]{centro_algebra_caminos} we have that $\Z(A) \cong K[x]$, then $\C(A) \cong K[c]$.
\end{proof}

 \subsection{Extended centroid and central clousure of \texorpdfstring{$KC_n$}{KCn}}\label{susec_centroideextendido}
 
 Since the centroid of the path algebra $KC_n$, with $C_n$ a cycle with $n$ vertices, is not trivial, one step forward would be to study its \emph{extended centroid} and its \emph{central closure}. As we already know from Subsection \ref{ejemplo_ciclo}, we can consider $KC_n$ as a subalgebra of $\MM_n(K[x])$ which is really useful because this means that $KC_n$ is a polynomial identity algebra thanks to Amitsur-Levitzki Theorem in \cite[Theorem 1]{amitsur_levitzki}. In addition, \cite[Theorem 4.1]{PI_algebras} establishes that the extended centroid of PI-algebras is isomorphic to the field of fractions of its center. In this case, we know that $\Z(KC_n) \cong K[x]$ (\cite[Proposition 3]{centro_algebra_caminos}) and the extended centroid is $\Gamma(KC_n) \cong K(x)$. However, knowing the extended centroid will not be enough. For the aim of using the construction of the central closure given in \cite{baxter_central_closure} we will need the explicit isomorphism $\Gamma(KC_n) \to K(x)$.

 The algebra $KC_n$ is a prime algebra, as we will prove in Proposition \ref{prop_caracterizacion_prima}, then we are in conditions to make use of the construction of the extended centroid and central closure given in \cite{baxter_central_closure}. First, let us consider the pair $(f,I)$ with $I$ a nontrivial ideal of $KC_n$ and $f\colon I \to KC_n$ a morphism of $KC_n$-bimodules. This pair will be called admissible and we can define a equivalence relation in the set of admissible pairs. We define it as $(f,I) \sim (g,J) \iff \left . f \right \vert_{I\cap J} =  \left . g \right \vert_{I\cap J}$.
 We define the $K$-algebra given by the set $\Gamma(KC_n) = \{(\overline{f,I}) \colon (f,I) \text{ is an admissible pair}\}$ with the operations
 \begin{equation*}
 \begin{split}
      k \cdot (\overline{f,I}) & := (\overline{kf,I}), \\
      (\overline{f,I})+ (\overline{g,J}) & := (\overline{f+g, I\cap J}), \\
       (\overline{f,I}) \cdot (\overline{g,J}) & := (\overline{f\circ g, g^{-1}(I)}).
 \end{split}
 \end{equation*}
   Denote $A = KC_n$, if we consider an admissible pair $(f,I)$, we have that $I_{i,i} := I \cap A_{i,i}$ is an ideal of $A_{i,i} = K[c_i]$. Then, $I_{i,i} = (q_i(c_i))$, the ideal generated by $q_i(c_i) \in K[c_i]$. Consider $q (x) = \lcm \{q_i(x)\}  \in K[x]$ we define the ideal $J := \bigoplus_{i,j=1}^n J_{i,j}$ with $J_{i,j} := (q(c_i))\mu_{i,j}$. The ideal $J$ is the maximum nontrivial ideal contained in $I$ such that $J \cap A_{i,i} = (p(c_i))$ for $p(x) \in K[x]$. It is obvious that $(f,I) \sim (f,J)$. We have that $f(q(c_i)) = f(v_iq(c_i)v_i) = v_if(q(c_i))v_i \in v_iAv_i = K[c_i]$. This means that $f(q(c_i)) = p_i(c_i)$. On one hand, we have that $f(q(c_i)\mu_{i,j})= f(q(c_i))\mu_{i,j}=p_i(c_i) \mu_{i,j}$, whereas, $f(q(c_i)\mu_{i,j}) = f(\mu_{i,j}q(c_j)) = \mu_{i,j}f(q(c_j)) = \mu_{i,j}p_j(c_j)= p_j(c_i)\mu_{i,j}$.  Thus, we have that $p_i(c_i) = p_j(c_i) = p(c_i)$ for every $0\leq i,j \leq n-1$ and as a consequence $f(q(c_i))= p(c_i)$ for every $i = 0,\ldots, n-1$. This will allow us to consider for an admissible pair $(f,I)$ a quotient of polynomials $\frac{p(x)}{q(x)}$.  
 \begin{proposition}\label{prop_centroide_extendido_ciclo}
 If we consider $A = KC_n$ the path $K$-algebra of a cycle of $n$ indexed vertices over $\{0,\ldots, n-1\}$. The correspondence $\Omega \colon \Gamma(A) \to K(x)$ given by
 \begin{equation*}
    (\overline{f,I}) \mapsto \Omega\left ( (\overline{f,I}) \right ) :=\frac{p(x)}{q(x)}
 \end{equation*}
 is an isomorphism, in other words, $\Gamma(A) \cong K(x)$.
 \end{proposition}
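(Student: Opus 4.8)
The plan is to show that $\Omega$ is a well-defined $K$-algebra isomorphism by checking, in order: well-definedness and independence of representative, injectivity, surjectivity, and compatibility with the three operations defining $\Gamma(A)$. First I would verify that $\Omega$ does not depend on the choice of admissible pair in an equivalence class: given $(f,I) \sim (g,J)$, one passes to the maximal ideals $J_f \subseteq I$ and $J_g \subseteq J$ of the form described in the preamble (generated in each corner by $q(c_i)\mu_{i,j}$ with $q = \lcm\{q_i\}$), notes that $f$ and $g$ agree on $J_f \cap J_g$, which contains $(r(c_i))\mu_{i,j}$ for $r = \lcm\{q_f, q_g\}$, and then the computation already carried out in the preamble (that $f(q(c_i)) = p(c_i)$ with a single polynomial $p$ independent of $i$) shows the two fractions $p_f/q_f$ and $p_g/q_g$ are equal in $K(x)$ because they agree after multiplying into a common denominator. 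One also has to observe the fraction $p(x)/q(x)$ is taken in lowest terms, or at least that the map to $K(x)$ is insensitive to common factors, which follows from the same essential-ideal computation.

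Next I would prove injectivity: if $\Omega(\overline{f,I}) = 0$ then $p(x) = 0$, so $f$ vanishes on the ideal $J = \bigoplus (q(c_i))\mu_{i,j}$; since $A = KC_n$ is prime (Proposition \ref{prop_caracterizacion_prima}) every nonzero ideal is essential, so a bimodule map vanishing on the nonzero ideal $J$ must vanish on $I$ as well — indeed if $f(a) \neq 0$ for some $a \in I$, then $JaJ \neq 0$ lies in $J$ and $f(JaJ) = Jf(a)J \neq 0$ by primeness, a contradiction. Hence $(f,I) \sim (0, A)$. For surjectivity, given $p(x)/q(x) \in K(x)$ with $q \neq 0$, I would exhibit the admissible pair consisting of $J = \bigoplus_{i,j}(q(c_i))\mu_{i,j}$ together with the bimodule map $f$ determined by $q(c_i)\mu_{i,j} \mapsto p(c_i)\mu_{i,j}$; one checks this is well-defined (the corner $J_{i,j}$ is a free $K[c_i]$-module of rank one on $q(c_i)\mu_{i,j}$, using the multiplication rules $\mu_{i,j}p(c_j) = p(c_i)\mu_{i,j}$ and $\mu_{i,j}\mu_{j,k} = c_i^{n(i,j,k)}\mu_{i,k}$ from Subsection \ref{ejemplo_ciclo}) and that it is an $A$-bimodule homomorphism, and then $\Omega(\overline{f,J}) = p(x)/q(x)$ by construction.

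Finally I would check that $\Omega$ respects the operations. For scalars this is immediate since $kf \mapsto kp/q$. For addition, representing $(\overline{f,I})$ and $(\overline{g,J})$ by their canonical ideals with denominators $q_1, q_2$, the sum is represented on $I \cap J$, whose canonical ideal has denominator $\lcm(q_1,q_2)$, and the numerator computation gives exactly $p_1/q_1 + p_2/q_2$. For multiplication $(\overline{f,I})\cdot(\overline{g,J}) = (\overline{f \circ g, g^{-1}(I)})$ one reduces to the canonical ideals and uses that $g$ acts on the corner $J_{i,j}$ as multiplication (on the $\mu_{i,j}$-component) by $p_2(c_i)/q_2(c_i)$ in the sense that $g(q_2(c_i)\mu_{i,j}) = p_2(c_i)\mu_{i,j}$, so composing with $f$ multiplies numerators and the denominator of $g^{-1}(I)$ works out to give $p_1 p_2/(q_1 q_2)$; this is the place where one must be a little careful about which ideal $g^{-1}(I)$ actually is, and I expect this multiplicativity check to be the main technical obstacle, since it requires tracking how the $\lcm$-denominators interact under preimage. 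Everything else is a direct consequence of the one-dimensionality of the corners $v_i A v_i = K[c_i]$ and the bimodule commutation relations established in Subsection \ref{ejemplo_ciclo} and in the discussion preceding the proposition.
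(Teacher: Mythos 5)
Your proposal is correct and follows essentially the same route as the paper: pass to the canonical ideals with corners $(q(c_i))\mu_{i,j}$, read off the fraction $p/q$, and verify independence of representative, the two operations, surjectivity and injectivity exactly as the paper does (your extra primeness/essentiality detour for injectivity is harmless but unnecessary, since $f(J)=0$ already gives $(\overline{f,I})=(\overline{0,J})$ straight from the equivalence relation). The multiplicativity step you flag as the main obstacle is resolved in the paper simply by restricting to the ideal $L$ with $L_{i,j}=(q(c_i)q'(c_i))\mu_{i,j}$, which is checked to lie in $g^{-1}(I)$ since $g(q(c_i)q'(c_i))=q(c_i)p'(c_i)\in J\subseteq I$ --- precisely the reduction to canonical ideals you anticipate.
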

 \begin{proof}
 In order to prove that the application is well defined we need to verify that the image of $(\overline{f,I})$ is the same independently of the element we use of the equivalence class. If we consider $(\left . f \right \vert_{I'},I')$ with $I' \subseteq I$ we have that their corresponding $J' \subseteq I' \subseteq I$ and $J'$ an ideal of $KC_n$, but $J$ is the maximum ideal verifying this property. Thus, $J'\subseteq J$ and this means, in particular, that $J'_{i,i} = (q'(c_i)) \subseteq (q(c_i)) = J_{i,i}$. This implies that $q'(x) = r(x)q(x)$. And $f(q'(c_i)) = f(r(c_i)q(c_i)) = r(c_i)p(c_i)$. Finally, this admissible pair is in correspondence with $\frac{r(x)p(x)}{r(x)q(x)} = \frac{p(x)}{q(x)}$. 
 For the second part, we need to prove that this application is an morphism of $K$-algebras. The fact that $\Omega$ behaves well with the scalar multiplication is straightforward.
 Let us take $(\overline{f,I}), (\overline{g,I'}) \in \Gamma(A)$ with $f(q(c_i)) = p(c_i)$ and $g(q'(c_i)) = p'(c_i)$. If we consider $q_0(x) = \lcm\{q(x),q'(x)\}$ and $J_0$ an ideal with $(J_0)_{i,j} = (q_0(c_i))\mu_{i,j}$. It is not difficult to check that, by construction, $J_0 \subseteq J \subseteq I$ and $J_0 \subseteq J' \subseteq I'$, thus, $J_0 \subseteq I\cap I'$. Denoting $Q(x) = \gcd\{q(x),q'(x)\}$ we have that $(f+g)(q_0(c_i)) = f(q_0(c_i)) +g(q_0(c_i)) = f(r(c_i)q(c_i))+g(r'(c_i)q'(c_i)) = r(c_i)p(c_i)+r'(c_i)p'(c_i)$ with $r(x) = \frac{q'(x)}{Q(x)}$ and $r'(x) = \frac{q(x)}{Q(x)}$ (because $q_0(x)Q(x) = q(x)q'(x)$). If we do some computation it is easy to verify that
 \begin{equation*}
 \begin{split}
      \Omega((\overline{f,I})+(\overline{g,I'})) & = \Omega((\overline{f+g,J_0})) =  
     \Omega((\overline{f,I}))+ \Omega((\overline{g,I'})).
 \end{split}
 \end{equation*}
 Now, if we consider $L$ a two-sided ideal with $L_{i,j} = (q(c_i)q'(c_i))\mu_{i,j}$ we have that $L \subseteq g^{-1}(I)$ because $g(q(c_i)q'(c_i)) = q(c_i)g(q'(c_i)) = q(c_i)p'(c_i) \in J \subseteq I$. Then $f(g(q(c_i)q'(c_i))) = f(q(c_i)p'(c_i)) = f(q(c_i))p'(c_i) = p(c_i)p'(c_i)$ and 
 \begin{equation*}
     \Omega((\overline{f,I})\cdot(\overline{g,I'})) = \Omega((\overline{f\circ g,L})) = \frac{p(x)p'(x)}{q(x)q'(x)} = \Omega((\overline{f,I})) \Omega((\overline{g,I'})).
 \end{equation*}
 Finally, it is obvious that this morphism is surjective. For an element $\frac{p(x)}{q(x)}$, we just need to consider the element $(\overline{f,J})$ with $J$ an ideal verifying $J_{i,j} = (q(c_i))\mu_{i,j}$ and $f \colon J \to A$ defined as $f(q(c_i)) = p(c_i)$. If we consider $(\overline{f,I})$ with $\Omega((\overline{f,I})) = 0$ this means that there is an ideal $J\subseteq I$ with $J_{i,j} = (q(c_i))\mu_{i,j}$ and $f(q(c_i)) = 0$ for $i = 0,\ldots, n-1$. This means that $f(J) = 0$ and $(\overline{f,I}) = (\overline{0,J}) = 0$.
 \end{proof}

 Now that we know how the isomorphism $\Omega \colon \Gamma(A) \to K(x)$ behaves, we are in conditions to construct the \emph{central closure} of $A$.  An element $x \in A\otimes \Gamma(A)$ is vanishing if there is a representation $\sum_{k=1}^m a_k \otimes \lambda_k$ with $a_k \in A$ and $\lambda_k = (\overline{f_k,I})$ for every $k = 1,\ldots,m$, verifying $\sum_{k = 1}^m P(a_k)f_k(z) = 0$
 for every $z \in I$ and $P \in \M(A)$ where $\M(A)$ is the algebra generated by the identity and the morphisms $L_a$ and $R_b$ with $a,b\in A$. Since $A$ is an associative algebra, it is easy to see that $\M(A) = \left \{\sum_{a,b} L_aR_b \colon a,b \in A \right \}$ (the sums are finite). Following \cite{baxter_central_closure}, the central closure of $A$ is $\widehat{A} :=  \frac{A\otimes \Gamma(A)}{M}$ with $M$ the set formed by vanishing elements in $A\otimes \Gamma(A)$. The set $M$ is an ideal of $A \otimes \Gamma(A)$ as a $\Gamma(A)$-algebra (\cite[Lemma 2.6]{baxter_central_closure}) and we can form the quotient. 
 Even though we have a construction of the central closure of $A$, Proposition \ref{prop_centroide_extendido_ciclo} will let us characterise the central closure of $A$ as a better understanding algebra. 
 
 \begin{theorem}\label{teorema_clausura_central_ciclo}
 If we consider the path algebra $A = KC_n$, then the central closure of $A$, that is, $\widehat{A}$ is isomorphic to $\MM_n(K(x))$. 
 \end{theorem}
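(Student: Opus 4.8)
The plan is to build an explicit $K$-algebra epimorphism $\Psi\colon A\otimes\Gamma(A)\to\MM_n(K(x))$, to show that $\ker\Psi$ is exactly the ideal $M$ of vanishing elements, and then to read off the isomorphism $\widehat{A}=(A\otimes\Gamma(A))/M\cong\MM_n(K(x))$. All the maps needed are already at hand: the monomorphism $\tau\colon A=KC_n\to\MM_n(K[x])\subseteq\MM_n(K(x))$ of Subsection~\ref{ejemplo_ciclo}, and the isomorphism $\Omega\colon\Gamma(A)\to K(x)$ of Proposition~\ref{prop_centroide_extendido_ciclo}. Since $K(x)$ is the centre of $\MM_n(K(x))$, the rule $a\otimes\lambda\mapsto\Omega(\lambda)\tau(a)$ is $K$-bilinear, $K$-balanced and multiplicative, so it does define such a $\Psi$.

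Surjectivity I expect to be routine: from $\tau(\mu_{i,j})=x^{m(i,j)}E_{i,j}$ one gets every matrix unit $E_{i,j}$ in $\Imagen(\Psi)$ by choosing $\lambda$ with $\Omega(\lambda)=x^{-m(i,j)}$, and $\tau(1_A)=I_n$ yields every scalar matrix; as $\Imagen(\Psi)$ is a subalgebra it then contains every $r(x)E_{i,j}$ with $r\in K(x)$, hence all of $\MM_n(K(x))$. Equivalently, clearing the denominators of a general matrix over $K(x)$ by a suitable multiple of $x$ (to absorb the lower triangular factors $x^{m(i,j)}$) exhibits a preimage directly.

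The real work is $\ker\Psi=M$. First I would fix a representative $(f,I)$ of a class $\lambda=\overline{(f,I)}$ and recall, from the discussion preceding Proposition~\ref{prop_centroide_extendido_ciclo}, the nonzero $q\in K[x]$ and the ideal $J$ with $J_{i,j}=(q(c_i))\mu_{i,j}\subseteq I$ on which $f(q(c_i)r(c_i)\mu_{i,j})=p(c_i)r(c_i)\mu_{i,j}$, where $\Omega(\lambda)=p(x)/q(x)$; translating through $\tau$ gives the key identity $\tau(f(z))=\Omega(\lambda)\tau(z)$ for all $z\in J$. Then, for a general $\sum_k a_k\otimes\lambda_k\in A\otimes\Gamma(A)$ I would pass to a common ideal $I$ for the finitely many classes (nonzero, since $A$ is prime) and the associated common $J$, and compute, for an arbitrary $P=\sum L_{a'}R_{b'}\in\M(A)$, that $\tau\bigl(\sum_k P(a_k)f_k(z)\bigr)=\bigl(\sum\tau(a')\,Y\,\tau(b')\bigr)\tau(z)$ for all $z\in J$, where $Y=\sum_k\Omega(\lambda_k)\tau(a_k)$. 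Because $\tau$ is injective, this shows that $Y=0$ forces $\sum_k a_k\otimes\lambda_k\in M$, i.e. $\ker\Psi\subseteq M$; conversely, if $\sum_k a_k\otimes\lambda_k\in M$, then taking $P={\rm Id}$ gives $Y\tau(z)=0$ for all $z\in J$, and since $\tau(J)$ contains the nonzero scalar multiples $q(x)x^{m(i,j)}E_{i,j}$ of all matrix units, this yields $Y=0$, i.e. $M\subseteq\ker\Psi$. Hence $\widehat{A}\cong\MM_n(K(x))$.

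The hardest part will be the bookkeeping in that last step: arranging a single ideal $J$ that serves a whole finite family of admissible pairs, checking $\tau(f(z))=\Omega(\lambda)\tau(z)$ on all of $J$ rather than only on the corners $A_{i,i}$ (where it is essentially already visible), and verifying that annihilating $\tau(J)$ really pins down $Y=0$. As a sanity check and an alternative route I would keep in mind that $A$ is a prime PI algebra (Proposition~\ref{prop_caracterizacion_prima} together with Amitsur--Levitzki), so that its central closure must coincide with the central localization $A\otimes_{\Z(A)}\Gamma(A)$; with $\Z(A)\cong K[x]$ and $\Gamma(A)\cong K(x)$ this is simply $A\cong\Imagen(\tau)$ with all nonzero polynomials inverted, which a one-line denominator computation identifies with $\MM_n(K(x))$.
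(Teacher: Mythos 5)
Your proposal is correct, and it follows the same overall strategy as the paper: the same map $a\otimes\lambda\mapsto\tau(a)\Omega(\lambda)$ into $\MM_n(K(x))$, essentially the same surjectivity argument (clearing denominators against $\tau(KC_n)$), and essentially the same computation for $M\subseteq\ker$. The one genuine divergence is in the inclusion $\ker\subseteq M$: the paper gets it for free from \cite[Lemma 2.11]{baxter_central_closure}, which characterises $M$ as the unique maximal ideal containing the basic vanishing elements and meeting $A\otimes 1$ trivially, so that it only needs to check $\ker\theta\cap(A\otimes 1)=0$ via injectivity of $\tau$. You instead prove directly that every element of the kernel is vanishing, by reducing each class to a common ideal $J$ with $J_{i,j}=(q(c_i))\mu_{i,j}$, establishing $\tau(f_k(z))=\Omega(\lambda_k)\tau(z)$ on all of $J$, and factoring $\tau\bigl(\sum_k P(a_k)f_k(z)\bigr)=\sum\tau(a')\,Y\,\tau(b')\,\tau(z)$ with $Y=\sum_k\Omega(\lambda_k)\tau(a_k)$; the centrality of $\Omega(\lambda_k)$ and the injectivity of $\tau$ then do the rest. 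This is more self-contained (no appeal to the maximality lemma) at the cost of the bookkeeping you flag; it is legitimate because $(\overline{f_k,I})=(\overline{f_k|_J,J})$ in $\Gamma(A)$, so the vanishing condition need only be verified on $J$. Your closing remark about identifying $\widehat{A}$ with the central localization of a prime PI algebra is a sound sanity check but, as stated, is not a substitute for the argument, since it invokes an identification you do not prove or cite precisely.
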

  
 \begin{proof}
 Let us consider the morphism of $\Gamma(A)$-algebras $\theta \colon A \otimes \Gamma(A) \to \MM_n(K(x))$ such that $ \theta\left ( \sum_{k=1}^m a_k \otimes \lambda_k \right ) := \sum_{k=1}^m \tau(a_k)\Omega(\lambda_k)$, with $\tau$ the isomorphism given in Subsection \ref{ejemplo_ciclo} and $\Omega$ the isomorphism given in Proposition \ref{prop_centroide_extendido_ciclo}. It will only be necessary to prove that $\ker(\theta) = M$ and that $\theta$ is surjective. 
  
  Let us consider a vanishing element, that is, $\sum_{k=1}^m a_k \otimes \lambda_k$ with $\lambda_k = (\overline{f_k,I})$ and $\sum_{k=1}^m P(a_k)f_k(z) = 0$ for all $P \in \M(A)$ and $z \in I$. If we consider the ideal $J \subset I$ with $J_{i,j} = (q(c_i))\mu_{i,j}$ and $f_k(q(c_i)) = p_k(c_i)$, we have that $  \sum_{k=1}^m a_kf_k\left (\sum_{i=1}^nq(c_i)  \right ) =  \sum_{k=1}^m a_k\sum_{i=1}^n p_k(c_i)  = 0$.
  If we apply the morphism $\tau \colon A \to \MM_n(K[x])$ to this element, we have that $\tau\left (\sum_{k=1}^m a_k\left( \sum_{i=1}^n p_k(c_i) \right ) \right ) =  \sum_{k=1}^m \tau(a_k)\tau\left( \sum_{i=1}^n p_k(c_i) \right )= \sum_{k = 1}^m \tau(a_k)p_k(x) = 0$. In consequence, $\theta \left (\sum_{k=1}^m a_k \otimes \lambda_k \right ) =\sum_{k = 1}^m \tau(a_k)\Omega(\lambda_k) = \sum_{k = 1}^m \tau(a_k)\frac{p_k(x)}{q(x)} = 0$. This proves that $M \subseteq \ker \theta$. 
  
  In order to prove the other contention we will make use of the maximality of $M$ given in Lemma \cite[2.11]{baxter_central_closure}. This Lemma establish that $M$ is the unique ideal of $A\otimes \Gamma(A)$ maximal with  respect to the property that $I \subseteq M$ and $M \cap (A\otimes 1) = 0$. Here, $I$ denotes the ideal generated by some particular vanishing elements. Then, the only thing we need to verify is that $\ker \theta \cap (A \otimes 1) = 0$. If $a \otimes 1 \in \ker \theta$ we have that $\theta(a\otimes 1) = \tau(a)\Omega(1) = \tau(a) = 0$ and because of $\tau$ is injective it is true that $a = 0$ and $a\otimes 1 = 0$.
  
  Finally, we need to prove that this morsphism is surjective. This is not difficult to obtain since all the matrix in $\MM_n(K(x))$ can be written as $\frac{1}{q(x)}B$ with $B \in \tau(KC_n)$. Then this matrix is the image of the element $a \otimes \lambda$, with $a = \sum_{i,j = 1}^n r_{i,j}(c_i)\mu_{i,j}$ where $r_{i,j} \in K[x]$ and $\lambda = (\overline{f,I})$ with $I_{i,j} = ((q(c_i))\mu_{i,j}$ and $f(q(c_i)) = v_i$ for $1\leq i,j \leq n$.
 \end{proof}

Describing the central closure of the path algebra of a cycle as an algebra of matrices, with coefficients in the field of fractions of polynomial, instead of as a quotient of a tensor product, eases the computations and the understanding of the behaviour of this algebra. Also, working with algebras of matrices over a field will allow to use, under certain conditions, well known theorems such as Wedderburn-Artin Theorem. 
We can say that the central closure and the extended centroid are good definitions because they keep being isomorphic for isomorphic algebras. Checking this fact is not difficult but it needs of some computations. In addition, the extended centroid and the central closure commute with direct sums. Again, this is not difficult but it can result tedious and far from the main objective in this work.

\section{Artinian and semiartinian path algebras}\label{sec_artinian}
The purpose of this section is the characterisation of Artinian and semiartinian path algebras. As we will prove, this will depend on several conditions of the own graph, like the number of vertices and edges or the existence of closed paths.

The stationary descending chain condition of an Artinian algebra can be easier studied if we know the behaviour of the involved ideals. That is the reason why being able to describe certain ideals of the path algebras in terms of their generators will make the description of Artinian path algebras easier

\begin{proposition}\label{prop_caracterizacion_artiniano}
Let $E$ be a graph and $A = KE$ the corresponding path algebra over the field $K$. The following statements are equivalent
\begin{enumerate}
    \item The algebra $A$ is left (right) Artinian.
    \item $E$ does not contain closed paths and $|E^0 \cup E^1| < \infty $.
    \item The algebra $A$ is finite dimensional. 
\end{enumerate}
\end{proposition}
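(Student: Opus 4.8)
The plan is to prove the cycle of implications $(3)\Rightarrow(1)\Rightarrow(2)\Rightarrow(3)$, which is the most economical route. The implication $(3)\Rightarrow(1)$ is immediate: a finite-dimensional $K$-algebra is both left and right artinian, since any descending chain of left (or right) ideals is in particular a descending chain of $K$-subspaces, and these stabilise by dimension count.

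For $(1)\Rightarrow(2)$, I would argue by contraposition, splitting into the two ways $(2)$ can fail. First, suppose $E$ contains a closed path; then it contains a closed simple path, and collapsing all vertices of $E$ outside this closed path via Proposition \ref{prop_colapso} exhibits a quotient of $A$ which contains (a copy of) the path algebra of a cycle $C_m$, hence contains $K[x]\cong K[c]\subseteq KC_m$ as a subalgebra via the corner $v K C_m v$. Since $K[x]$ is not artinian (the chain $(x)\supsetneq(x^2)\supsetneq\cdots$), and a corner $eAe$ of a left artinian algebra is left artinian, while quotients of left artinian algebras are left artinian, we get a contradiction. Second, suppose $E$ has no closed path but $|E^0\cup E^1|=\infty$. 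If $|E^0|=\infty$, the vertices $\{v\}_{v\in E^0}$ are orthogonal idempotents, and one builds a strictly descending chain of left ideals using infinitely many of them (e.g. using that in a left artinian ring every set of orthogonal idempotents with pairwise-incomparable left ideals must be finite; concretely, pick a countable sequence $v_1,v_2,\dots$ and consider the left ideals $\sum_{k\ge n}Av_k$, checking strict descent using that $E$ is acyclic so distinct vertices generate independent pieces). If $|E^0|<\infty$ but $|E^1|=\infty$, then since $E$ is acyclic yet finite in vertices, some vertex $v$ must have infinitely many edges into or out of it, but then an argument on path lengths — acyclicity bounds the length of any path by $|E^0|$, so $\dim_K KE<\infty$ would follow from $|E^1|<\infty$; conversely infinitely many edges force either an infinite antichain of left ideals generated by edges, or (more carefully) one observes $A$ cannot be artinian because $\dim_K vAv$ or $\dim_K vAw$ becomes infinite, contradicting that artinian corners are finite-dimensional over... — here I would instead just route through $\dim_K A$ directly, see below.

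For $(2)\Rightarrow(3)$: if $E$ is acyclic, every path is simple, so its length is at most $|E^0|-1$; hence $\PP(E)=\bigsqcup_{n=0}^{|E^0|-1}E^n$ and each $E^n$ is finite because $|E^0|,|E^1|<\infty$ bound the number of words. Therefore $\dim_K KE=|\PP(E)|<\infty$. This closes the cycle.

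The main obstacle I anticipate is the $|E^0|=\infty$ acyclic sub-case of $(1)\Rightarrow(2)$: one must produce a genuine strictly descending chain of one-sided ideals. A clean way is this: since $E$ is acyclic, define on $E^0$ the reachability preorder and extract an infinite antichain or an infinite chain; in the chain case $v_1\to v_2\to\cdots$ the right ideals $r_nA$ with $r_n=$ (sum of paths of length $\ge n$ starting along the chain) — or more simply the left ideals $A(\sum_{i>n}v_{i})$ fail to be comparable, contradicting artinianity since a left artinian ring has no infinite set of orthogonal idempotents with the property that none of the partial sums' left ideals contains another (this is a standard fact: in a left artinian ring $1$—or an approximate identity—decomposes into finitely many primitive idempotents). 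Rather than invoke an identity (the algebra is nonunital when $|E^0|=\infty$), I would note directly that $A\supseteq\bigoplus_{v\in E^0}Av$ with infinitely many nonzero summands, and $\bigoplus_{v\in E^0}Av$ contains the strictly descending chain of left ideals $\bigoplus_{v\notin\{v_1,\dots,v_n\}}Av$, for any fixed enumeration $v_1,v_2,\dots$ of a countable subset — strictness holds because $Av_n\neq 0$ and $Av_n\cap\bigoplus_{v\neq v_n}Av=0$ by acyclicity and the Peirce decomposition. This contradicts $(1)$, completing the contrapositive.
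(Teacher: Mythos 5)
Your overall architecture is sound and, for most of it, runs parallel to the paper's: the implication $(2)\Rightarrow(3)$ via the bound $\len(\mu)\le |E^0|-1$ for paths in an acyclic graph is exactly the ``well known'' fact the paper invokes without proof, and your strictly descending chain $\bigoplus_{v\notin\{v_1,\dots,v_n\}}Av$ in the infinite-vertex case is literally the paper's chain. For the closed-path obstruction you take a different route: the paper simply exhibits the strictly descending chain $A\gamma\supset A\gamma^2\supset\cdots$ (strict because multiplying by $\gamma$ raises the length of every path in a linear combination), whereas you collapse onto the cycle and pass to the corner $v(\cdot)v$, using that quotients and corners of left artinian algebras are left artinian. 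That works --- note only that the collapsed graph may retain edges besides those of the cycle, so the corner is a free algebra ${\rm Ass}(X_v)$ on at least one generator rather than $K[x]$ itself; it is still never left artinian --- but it is heavier machinery than the paper needs.

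The genuine gap is the sub-case ``$E$ acyclic, $|E^0|<\infty$, $|E^1|=\infty$''. Your text there trails off and falls back on ``route through $\dim_K A$ directly'', but infinite dimensionality by itself does not rule out artinianity (an infinite field extension $L/K$ is an infinite-dimensional artinian $K$-algebra), and invoking $(1)\Leftrightarrow(3)$ at that point would be circular. The case is nonvacuous (two vertices joined by infinitely many parallel edges) and must be treated. The fix is the same device as in the vertex case: choose distinct edges $f_1,f_2,\dots$ and consider the left ideals $I_n=\sum_{i\ge n}Af_i$. Every element of $Af_i$ is a linear combination of paths of length at least $1$ whose final edge is $f_i$, and a path determines its final edge; hence $f_n\in I_n\setminus I_{n+1}$ and the chain is strictly descending, contradicting $(1)$. (This is what the paper's ``the other case is completely analogous'' amounts to.) With that inserted, your proof is complete.
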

\begin{proof}
We will only prove the equivalence between the first and the second statement. The equivalence between the second and third statement is well known in the path algebra theory.

If $|E^0 \cup E^1| < \infty $ and $E$ does not contain closed paths, then Proposition \ref{prop_caracterizacion_artiniano} implies that $A$ is finite dimensional and it is well known that this means left (right) Artinian. 
Reciprocally, let us consider that $A$ is left Artinian, the proof is analogous for right Artinian. If $|E^0\cup E^1| = \infty$, then $|E^0| = \infty$ or $|E^1| = \infty$. Let us suppose that $|E^0| = \infty$ (the other case is completely analogous). We can consider the descending chain of ideals.
\begin{equation*}
    \sum_{u \in E^0} Au \supset \sum_{u \in E^0\setminus \{v_1\}} Au \supset \sum_{u \in E^0\setminus\{v_1,v_2\}} Au \supset \cdots \supset \sum_{u \in E^0\setminus \{v_1,\ldots,v_n\}} Au \supset \cdots 
\end{equation*}
As we will see, this chain is strictly descending. Let us suppose that $$\sum_{u \in E^0\setminus \{v_1,\ldots,v_n\}} Au= \sum_{u \in E^0\setminus \{v_1,\ldots,v_n,v_{n+1}\}} Au.$$ In particular, this means that $v_{n+1} \in \sum_{u \in E^0\setminus \{v_1,\ldots,v_n,v_{n+1}\}} Au$ and that is not possible. 
On the other hand, if we consider that there is a closed path $\gamma \in A$. We construct the strictly descending chain of ideals
\begin{equation*}
    A\gamma \supset A\gamma^2 \supset \cdots \supset A\gamma^n \supset \cdots .
\end{equation*}
Thanks to the same arguments that we used before, we can observe that this is a strictly descending chain (it is not stationary), and that can not be possible. 
\end{proof}
\begin{remark}
As a consequence of Proposition \ref{prop_caracterizacion_artiniano}, in the path algebras context, left Artinian algebra and right Artinian algebra are equivalent. In fact, it can be proven (in the same way as Proposition \ref{prop_caracterizacion_artiniano}) that it is enough to verify the condition of descending chain for two-sided ideals. 
\end{remark}

In relation to Artinian algebras we have, in a less restricted way, semiartinian algebras. These algebras are described in terms of the socle.

\begin{definition}
    Let us consider a $K$-algebra $A$ and $M$ a left (resp. right) $A$-module. We define \emph{the left (resp. right) socle of $M$}, denoted by $\soc(M)$ as the sum of all the minimal left (resp. right) submodules of $M$. In the path algebras context, we define $\soc_l(A) := \soc (_AA)$ (resp. $\soc_r(A):= \soc(A_A)$), that is, the sum of all the minimal left (resp. right) ideals of $A$.
\end{definition}

\begin{definition}
    We say that the $K$-algebra $A$ is \textit{left (resp. right) semiartinian} if for every proper left (resp. right) ideal $I$ we have that $\soc(A/I) \neq \{0\}$.
\end{definition}
The definition of semiartinian algebras makes use of the socle of a module. Then, we will need to know the behaviour of the minimal ideals. As we will acknowledge in the following proposition, they are related to the sources and the sinks. When we try to build a minimal left (resp. right) ideal, the first we think of is to construct a one dimensional ideal. This ideal must take the form of $I = Kx$ with $x$ a certain element in $KE$. If we do not choose wisely our generator of the ideal, there might be an element $y \in KE$ such that $yx \neq kx$ (resp. $xy = 0$) and $I$ is not one dimensional. However, if we consider $x$ a source (resp. a sink) there is no possible path (or element) in $y \in KE$ that makes $yx \neq kx$. This means that sources and sinks will be related to the minimal ideals that we can construct in $KE$. We will illustrate this with more details in the next proposition.
\begin{proposition}\label{prop_zocalo_pathalgebra}
    Let us consider $E$ a directed graph. The left ideal $I$ is minimal if and only if $I = K x$ with $x = \sum_{i=1}^n k_i \lambda_i$ ($k_i \in K^{\times}$, $\lambda_i \in \PP(E)$ and $\lambda_i \neq \lambda_j$ if $i\ne j$) and $s(\lambda_i) = v$ with $v \in \source(E)$, for every $i=1,\ldots,n$. In addition, if $x = \sum_{i=1}^n k_i \lambda_i$ and $x' = \sum_{j=1}^m l_i \mu_j$, we have $Ix \cong Ix'$ as a left modules if and only if $s(\lambda_i) = s(\mu_j)$ for every $1\leq i,j \leq n,m$. 
\end{proposition}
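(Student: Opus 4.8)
The plan is to characterise minimal left ideals by reducing to the case of a principal ideal $Kx$ and then analysing when a single path (or linear combination of paths) generates a one-dimensional left ideal. First I would establish the "only if" direction: suppose $I$ is a minimal left ideal. Any nonzero element $z \in I$ can be written $z = \sum k_i \lambda_i$ with the $\lambda_i$ distinct paths. I would show that minimality forces $I = Kz$ for a suitable such $z$, and moreover that all the $\lambda_i$ must share a common source which is a source vertex of $E$. The key observation is this: if $v = s(\lambda_i)$ is not a source, pick an edge $f$ with $r(f) = v$; then left-multiplication by $f$ sends the paths starting at $v$ to strictly longer paths $f\lambda_i$, which are linearly independent of the original ones, so $fz \notin Kz$ unless $fz = 0$. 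A careful bookkeeping on path lengths (grouping the $\lambda_i$ by source vertex and looking at the longest surviving paths) shows that if $I = Kz$ then in fact all $\lambda_i$ must emanate from a single source vertex $v \in \source(E)$; otherwise one produces, via multiplication by an idempotent $u \in E^0$ or by an edge, a nonzero proper submodule. Conversely, for the "if" direction, if $x = \sum k_i\lambda_i$ with all $s(\lambda_i) = v \in \source(E)$, then for any path $\mu$ one has $\mu x = \delta_{r(\mu),v}\,\mu x$, but since $v$ is a source, $r(\mu) = v$ forces $\mu = v$, so $\mu x \in \{0, x\}$; extending linearly, $Ax = Kx$, hence $Kx$ is a (one-dimensional, thus minimal) left ideal.

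For the isomorphism statement, the plan is to exhibit the explicit bijection. Given $x = \sum_{i=1}^n k_i\lambda_i$ and $x' = \sum_{j=1}^m l_j\mu_j$ with respective common sources $v = s(\lambda_i)$ and $w = s(\mu_j)$, both in $\source(E)$, I would note that $Ax = Kx \cong K \cong Kx' = Ax'$ as $K$-vector spaces trivially, so the content is the $A$-module isomorphism. If $\varphi\colon Kx \to Kx'$ is an $A$-module isomorphism, write $\varphi(x) = c\,x'$ for some $c \in K^\times$. For any $u \in E^0$ we then need $\varphi(ux) = u\varphi(x)$, i.e. $\delta_{u,v}\,c\,x' = \delta_{u,w}\,c\,x'$, which forces $v = w$. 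Conversely, if $v = w$, the map $x \mapsto x'$ extends to a $K$-linear isomorphism $Kx \to Kx'$, and since $ux = \delta_{u,v}x$ and $ux' = \delta_{u,v}x'$ for all $u \in E^0$ (and these idempotents, together with scalars, determine the action of all of $A$ on these one-dimensional spaces because $\mu x \in \{0,x\}$), it is an $A$-module map.

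The main obstacle I expect is the "only if" direction of the first claim — specifically, proving that a minimal left ideal must be one-dimensional of the stated form, rather than just containing such an element. The subtlety is that an arbitrary element $z = \sum k_i\lambda_i$ of a minimal ideal need not itself generate a one-dimensional ideal; one must argue that $Az$ minimal forces, after possibly replacing $z$ by $uz$ for a vertex $u$ or by $fz$ for an edge $f$, an element whose component paths all start at a genuine source. The cleanest way is: among all nonzero elements of $I$, choose one, $z = \sum k_i\lambda_i$, minimising the total number of paths $n$; then show $n = 1$ is impossible to avoid only if all sources coincide and are source vertices, by applying the length-increasing argument above and invoking minimality of $I = Az$. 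I would also need the small lemma that distinct paths are $K$-linearly independent in $KE$ (immediate from the definition of the path algebra as having $\PP(E)$ as a basis), and that $f\lambda$ for an edge $f$ and a path $\lambda$ with $r(f) = s(\lambda)$ is again a basis path distinct from $\lambda$. These are routine; the combinatorial heart is the source-coincidence argument.
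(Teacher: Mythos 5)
Your proposal is correct and follows essentially the same route as the paper's proof: reduce to $I=Ax$, cut down by a vertex idempotent so that all component paths share a common source $v$, use an incoming edge $f$ together with the length-increasing/linear-independence argument to force $v$ to be a source of $E$ (whence $Av=Kv$ and $I=Kx$), and detect the isomorphism class by evaluating the module map at $vx$. The only cosmetic difference is that you phrase the contradiction as $fz\notin Kz$ via length bookkeeping, while the paper derives it from the impossible relation $yfx=x$ given by minimality; these are the same mechanism.
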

\begin{proof}
    Let us consider $A = KE$ and $I$ a minimal left ideal. We can consider $I = Ax$ with $x \in A$ because  $I$ is minimal. If we write $x = \sum_{i=1}^n k_i \lambda_i$ with $k_i \in K^{\times}$ and $\lambda_i \in \PP(E)$ with $\lambda_i \neq \lambda_j$ for $i\neq j$, there is a vertex $v \in E^0$ with $vx \in Ax \setminus \{0\}$, then $I = Avx$ and without loss of generality we can consider $x = vx$, that is, all the paths starts in $v$. If $v$ is not a source this means that there is an edge $f \in E^1$ such that $fv = f$. We can consider then $fx \in Ax \setminus \{0\}$. Because $I$ is minimal, we have that $I = Afx = Ax$, this implies that there is $y \in A$ such that $yfx = x$. Then we have that $  \sum_{i,j=1}^{n,m} l_jk_i\mu_jf\lambda_i = \sum_{i=1}^n k_i \lambda_i$ with $l_j \in K^{\times}$ and $\mu_j \in \PP(E)$. This implies that $\sum_{i,j=1}^{n,m} l_jk_i\mu_jf\lambda_i - \sum_{i=1}^n k_i \lambda_i = 0$ which is a linear combination of different paths which can not be possible. Thus, $v$ must be a source and $Av = Kv$. Finally, $I = Ax = Avx = Kv x = K x$. 
    
    Observe that the reverse is trivial because the ideal $I = Kx$ is one dimensional. 

    Let us now consider $\varphi \colon Kx \to Kx'$ an isomorphism of left modules (left ideals). If we have that $v = s(\lambda_i)$ for $i = 1,\ldots,n$ then $kx' = \varphi(x) = \varphi(vx) = v\varphi(x) = kvx' $  which implies that $v = s(\mu_j)$. For the converse, we just have to consider the isomorphism $\theta \colon Kx \to Kx'$ such that $\theta(x) = x'$. 
\end{proof}
    \begin{remark}\rm
    The dual of this result is also true. That is, if $I$ is a nonempty minimal right ideal of $KE$,  then $I = Kx$ with $x$ a linear combination of paths that end in the same sink. In addition, if $x = \sum_{i=1}^n k_i \lambda_i$ and $x'=\sum_{j=1}^m l_j \mu_j$ we have that $Kx \cong Kx'$ if and only if $r(\lambda_i) = r(\mu_j) = v \in \sink(E)$ for every $1\leq i,j \leq n,m$.  
\end{remark}
Once we know how the minimal ideals in $KE$ behave we are in conditions to describe the left and right socles. We have that $\soc_l(KE) = \sum Kx$ with $x$ such as Proposition \ref{prop_zocalo_pathalgebra}. If continue computing we have that $\soc_l(KE) = \sum Kx = \bigoplus_{s(\lambda) \in \source(E)} K\lambda $ with $\lambda \in \PP(E)$. If we group each of the elements in the sums by the isomorphic class, we have that $vA = \bigoplus_{s(\lambda) = v} K\lambda$ with $ v \in \source(E)$. Thus, $\soc_l(KE) = \bigoplus_{v \in \source(E)}vA$. 
\begin{corollary}\label{corolario_zocalo_pathalgebra}
    Let us consider $E$ a directed graph. Then left socle and right socle of $A = KE$ are respectively $\soc_l(A) = \bigoplus_{v \in \source(E)} vA$ and $ \soc_r(A) = \bigoplus_{v \in \sink(E)} Av$. That is, left socle is the ideal generated by $\source(E)$ and the right socle is the ideal generated by $\sink(E)$ (both of them two-sided ideals). 
\end{corollary}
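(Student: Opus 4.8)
My plan is to extract the left socle directly from Proposition \ref{prop_zocalo_pathalgebra}, reorganise the resulting collection of one-dimensional ideals according to their common source vertex, and then note that directness of all the sums involved is forced by the linear independence of distinct paths in $KE$; the assertion about $\soc_r(A)$ will follow by the dual argument, using the Remark that follows Proposition \ref{prop_zocalo_pathalgebra}.

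First I would identify precisely which one-dimensional left ideals appear. If $v\in\source(E)$ then $r^{-1}(v)=\emptyset$, so no path of positive length ends at $v$ and hence $Av=Kv$. Consequently, for any path $\lambda$ with $s(\lambda)=v$ we have $A\lambda=(Av)\lambda=Kv\lambda=K\lambda$, so $K\lambda$ is a one-dimensional --- hence minimal --- left ideal. Conversely, Proposition \ref{prop_zocalo_pathalgebra} says that every minimal left ideal has the form $Kx$ with $x$ a $K$-combination of paths sharing a common source $v\in\source(E)$, and any such $x$ lies in the span of $\{\lambda\in\PP(E):s(\lambda)=v\}$. Taking the sum over all minimal left ideals therefore gives
\[
\soc_l(A)=\sum_{v\in\source(E)}\ \sum_{\substack{\lambda\in\PP(E)\\ s(\lambda)=v}}K\lambda .
\]

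Next I would make these sums direct. Since $\PP(E)$ is by construction a $K$-basis of $KE$, the span of the paths with source $v$ is exactly $vA=\bigoplus_{s(\lambda)=v}K\lambda$, and for distinct vertices $v$ these subspaces are spanned by disjoint subsets of that basis, so their sum is direct; this gives $\soc_l(A)=\bigoplus_{v\in\source(E)}vA$. To see this coincides with the two-sided ideal generated by $\source(E)$, observe that $\bigoplus_{v\in\source(E)}vA$ is a left ideal and also a right ideal (each $vA$ is right-invariant), and that $Av=Kv$ at a source gives $AvA=vA$, whence $\sum_{v\in\source(E)}AvA=\bigoplus_{v\in\source(E)}vA$. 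For $\soc_r(A)$ I would run the mirror-image argument: a sink $v$ has $s^{-1}(v)=\emptyset$, so $vA=Kv$ and $\lambda v$ spans a minimal right ideal for every path $\lambda$ ending at $v$; the Remark after Proposition \ref{prop_zocalo_pathalgebra} supplies the converse, and grouping by terminal sink yields $\soc_r(A)=\bigoplus_{v\in\sink(E)}Av$.

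The whole argument is essentially bookkeeping once Proposition \ref{prop_zocalo_pathalgebra} is available. The two points that need a moment of care are the computation $Av=Kv$ at a source (which is what makes a single path starting there already generate a left ideal, rather than merely lie inside one) and the observation that directness of every sum in sight is inherited from the $K$-linear independence of distinct paths in $KE$; neither is a genuine obstacle.
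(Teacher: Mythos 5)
Your argument is correct and follows essentially the same route as the paper: read off the minimal left ideals from Proposition \ref{prop_zocalo_pathalgebra}, group them by their common source vertex to get $\bigoplus_{v\in\source(E)}vA$ (directness coming from the fact that $\PP(E)$ is a $K$-basis), and dualise for the right socle. The only additions are the explicit checks that $Av=Kv$ at a source and that $\bigoplus_{v}vA$ coincides with the two-sided ideal generated by $\source(E)$, which the paper leaves implicit.
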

The sum of all isomorphic (as $KE$-modules) minimal left ideals is a two-sided ideal that (following the classical theory) we might call \textit{homogeneous component}.

As we know, the left socle of an algebra $A$ is a two-sided ideal. If we consider the quotient $A/\soc_l(A)$ we obtain a new algebra and we can try to compute again the left socle of it. Once again, $\soc_l(A/\soc_l(A))$ is a two-sided ideal in $A/\soc_l(A)$ and the ideals in $A/\soc_l(A)$ come from ideals in $A$ that contain $\soc_l(A)$. That is, $\soc_l(A/\soc_l(A)) = S/\soc_l(A)$ with $S$ an ideal in $A$ with $\soc_l(A) \subseteq S$. We can continue this process of computing left socles of different quotient until we obtain an ascending chain of ideals. In this sense we can define the following.
\begin{definition}\label{def_chainsocle}
    Let $A$ be a $K$-algebra. We define the \emph{left (resp. right) ascending socle chain of ideals} as the well-ordered chain of two sided ideals
    \begin{equation*}
        0 \subset S_1 \subset S_2 \subset \cdots \subset S_\alpha \subset \cdots 
    \end{equation*}
    with $S_{\alpha+1}/S_\alpha = \soc_l(A/S_\alpha)$ (resp. $S_{\alpha+1}/S_\alpha = \soc_r(A/S_\alpha)$) and $S_\omega = \cup_{\alpha < \omega} S_\alpha$ if $\omega$ is a limit ordinal.
\end{definition}

In the path algebras context, Corollary \ref{corolario_zocalo_pathalgebra} let us compute the left and right chain of socles. If we consider a directed graph $E$ and its corresponding path algebra $KE$, we know that $\soc_l(KE)$ is the ideal generated by $\source(E)$. As we know, the quotient $KE/\soc_l(KE)$ is isomorphic to the path algebra $K(E\setminus \source(E))$ and if we want to compute the socle of this new path algebra we need the sources of the graph $E\setminus \source(E)$. The reader might not find difficult to prove that, following the notation in Definition \ref{def_chainsocle}, $S_2$ is the ideal generated by the vertices $\source(E) \cup \source(E \setminus \source(E))$. We can keep this reasoning and obtain the whole ascending socle chain. Although all of these constructions are given in terms of the left socles and the sources of a graph, it is possible to replicate all this process in terms of the right socles and the sinks. In fact, during the rest of this section, we will focus on the left socles but all the results provided here also hold for the right socles.

In order to ease the notation we establish the next definition.
\begin{definition}\label{def_ifuente}
    Consider a directed graph $E$, we define $\f(E) := \source(E)$. In general, $\f^0(E) := \emptyset$, $\f^1(E) := \f(E)$ and $\f^i(E) := \f \left (E \setminus \bigcup_{j < i} \f^j(E) \right )$ for every ordinal $i >0$. We denote \emph{the set of $i$-sources} by $\f^i(E) \subset E^0$ .
\end{definition}
{
With this new notation it is not difficult to check that $S_\alpha$ is the ideal generated by $\bigcup_{i \leq \alpha} \f^i(E)$. Besides, it will ease the characterisation of the semiartinian path algebras. As we will check, left semiartinian path algebras are highly connected to the $i$-sources. We first need these two key results.
}

\begin{lemma}\label{prop_conexion_isources}
      If  $u \in \f^i(E)$ with $i > 1$ there is an edge $e \in E^1$ such that $r(e) = u$ and $s(e) \in \f^j(E)$ with $j<i$. In addition, if $i$ is not a limit ordinal, we can consider $j = i-1$.
\end{lemma}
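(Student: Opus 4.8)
The statement asserts that any $i$-source $u$ (for $i>1$) is the range of some edge whose source is a $j$-source for some $j<i$, with $j=i-1$ when $i$ is a successor ordinal. The plan is to recall how $\f^i(E)$ is built: by Definition \ref{def_ifuente}, $\f^i(E) = \source\bigl(E\setminus\bigcup_{j<i}\f^j(E)\bigr)$, where the collapse through a set of vertices also removes all edges incident to those vertices (Proposition \ref{prop_colapso}). So let $F = E\setminus\bigcup_{j<i}\f^j(E)$ and note $u\in\source(F)$, i.e.\ $r_F^{-1}(u)=\emptyset$. Since $u$ itself survives into $F$, in particular $u\notin\f^j(E)$ for any $j<i$; in particular $u\notin\source(E)=\f^1(E)$, so $r_E^{-1}(u)\neq\emptyset$: there exists at least one edge $e\in E^1$ with $r_E(e)=u$.

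\textbf{Main step.} Every such edge $e$ with $r_E(e)=u$ must have been deleted in forming $F$ (otherwise $e\in F^1$ would contradict $r_F^{-1}(u)=\emptyset$, since $u\in F^0$). By the description of the collapse, an edge is deleted precisely when its source or its range lies in $\bigcup_{j<i}\f^j(E)$. The range is $u$, which is not in that union, so it must be the source: $s_E(e)\in\bigcup_{j<i}\f^j(E)$, i.e.\ $s_E(e)\in\f^j(E)$ for some $j<i$. This gives the first assertion for \emph{every} edge into $u$.

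\textbf{Successor case.} Now suppose $i=i_0+1$ is a successor ordinal. We want an edge $e$ into $u$ with $s_E(e)\in\f^{i_0}(E)$. Consider the graph $G = E\setminus\bigcup_{j<i_0}\f^j(E) = E\setminus\bigcup_{j\le i_0}\f^j(E)$ wait — more carefully, let $G=E\setminus\bigcup_{j<i_0}\f^j(E)$, so that $\f^{i_0}(E)=\source(G)$ and $F = G\setminus\source(G)$. The vertex $u$ lies in $F^0\subseteq G^0$ and $u\notin\source(G)$ (since $u\notin\f^{i_0}(E)$), hence $r_G^{-1}(u)\neq\emptyset$: pick $e\in G^1$ with $r_G(e)=u$. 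This $e$ comes from an edge of $E$ into $u$; since $e$ survives in $G$, its $E$-source is not in $\bigcup_{j<i_0}\f^j(E)$. But by the first part $s_E(e)\in\f^j(E)$ for some $j<i$, so the only possibility left is $j=i_0$, i.e.\ $s_E(e)\in\f^{i_0}(E)$, as desired.

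\textbf{Expected obstacle.} The reasoning is essentially bookkeeping about which edges survive successive collapses, so the only real care needed is at \emph{limit} ordinals: there $\bigcup_{j<i}\f^j(E)$ is a directed union and one must make sure that ``$e$ deleted in passing to $F$'' indeed means $s_E(e)$ entered the union at some stage $j<i$ — which is immediate since the union is over all $j<i$ and deletion of an edge happens at the first stage one of its endpoints is removed. No genuinely hard analytic point arises; the subtlety is purely in not conflating ``$j<i$'' with ``$j=i-1$'' when $i$ is a limit, which is exactly why the last clause of the lemma is restricted to successor ordinals.
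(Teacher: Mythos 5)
Your proof is correct and follows essentially the same route as the paper: both derive the existence of an edge into $u$ from $u\notin\source(E)$, observe that any such edge must have been deleted in forming $E\setminus\bigcup_{j<i}\f^j(E)$ so its source lies in some $\f^j(E)$ with $j<i$, and settle the successor case by examining stage $i-1$. The only differences are cosmetic: the paper handles the successor case by contradiction (if no edge into $u$ had source in $\f^{i-1}(E)$, then $u$ would already be an $(i-1)$-source), whereas you argue directly by picking an edge into $u$ surviving in $E\setminus\bigcup_{j<i-1}\f^j(E)$; you should also delete the mid-sentence self-correction (``wait --- more carefully'') before submitting.
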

  \begin{proof}
      Let us consider $u \in \f^i(E)$ with $i > 1$, and denote by $F_i =  E \setminus \cup_{j < i}\f^j (E)$. If $r(e) \neq u$ for every $e \in E^1$ then $u \in \source(E) = \f^1(E)$ and this enter in contradiction with our hypothesis. Thus, there is an edge $e \in r^{-1}(u)$. Furthermore, we have that $ e \notin F_i^1$, if not $u \notin \f \left ( F_i\right ) = \f^i(E)$. This leads to $s(e) \notin F_i^0$ which implies that $s(e) \in \cup_{j < i}\f^j (E)$. 

      Let us also consider that $i$ is not a limit ordinal and suppose that $s(e) \notin \f^{i-1}(E)$ for every $e \in r^{-1}(u)$. Thus, we have that $\{s(e) \colon r(e) = u\} \subseteq \cup_{j<i-1}\f^j(E)$ and $u \in \f\left ( E \setminus\cup_{j< i-1}\f^j(E)\right ) = \f^{i-1}(E)$ which is not true.
  \end{proof}

\begin{proposition}
\label{lema_semiartinian}
    Let $E$ be a directed graph and $KE$ its corresponding path algebra. Consider a left ideal $I$ such that $\soc_l(KE/I) = \{0\}$, then $\{\lambda \in \PP(E) \colon s(\lambda) \in \f^i(E)\} \subset I$ for every ordinal $i$. In particular, $\f^i(E) \subset I$ for every ordinal $i$. 
\end{proposition}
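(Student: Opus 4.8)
The plan is to prove the statement by transfinite induction on the ordinal $i$, after first isolating the case of genuine sources as a self-contained sub-lemma. \emph{Step 1 (the source case).} I would first establish: if $G$ is \emph{any} directed graph, $J$ a left ideal of $KG$ with $\soc_l(KG/J)=\{0\}$, and $\lambda\in\PP(G)$ has $s(\lambda)=v\in\source(G)$, then $\lambda\in J$. Indeed, since $v$ is a source no path of $G$ ends at $v$ except the trivial one, so $KGv=Kv$; as $\lambda=v\lambda$ this gives $KG\lambda=(KGv)\lambda=K\lambda$, a one-dimensional — hence minimal — left ideal (this is the situation of Proposition \ref{prop_zocalo_pathalgebra}). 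In the quotient, $(KG/J)\overline{\lambda}=\overline{KG\lambda}=K\overline{\lambda}$ is $\{0\}$ when $\lambda\in J$ and otherwise a nonzero minimal left ideal of $KG/J$; the latter is impossible, as it would lie inside $\soc_l(KG/J)=\{0\}$. Hence $\lambda\in J$.

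\emph{Step 2 (the induction).} Now fix $I$ with $\soc_l(KE/I)=\{0\}$ and prove $P(i)$: ``$\{\lambda\in\PP(E):s(\lambda)\in\f^i(E)\}\subseteq I$'' by induction on $i$. For $i=0$ this is vacuous, as $\f^0(E)=\emptyset$. Let $i\geq 1$ and assume $P(j)$ for every $j<i$; put $X:=\bigcup_{j<i}\f^j(E)\subseteq E^0$. First, $(X)\subseteq I$: the two-sided ideal $(X)$ is spanned by the paths $\rho$ that visit some $w\in X$, and any such $\rho$ factors (by concatenation) as $\rho=\rho_1\rho_2$ with $s(\rho_2)=w\in\f^j(E)$ for some $j<i$, so $\rho_2\in I$ by $P(j)$ and $\rho=\rho_1\rho_2\in KE\cdot I\subseteq I$. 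By Proposition \ref{prop_colapso}, $KE/(X)\cong KF$ with $F:=E\setminus X$ the collapse of $E$ through $X$, and Definition \ref{def_ifuente} gives $\source(F)=\f(F)=\f^i(E)$. Since $(X)\subseteq I$, the image $I/(X)$ is a left ideal of $KE/(X)\cong KF$ and $KF/(I/(X))\cong KE/I$, so $\soc_l(KF/(I/(X)))=\{0\}$. Applying Step 1 with $G=F$ and $J=I/(X)$, every path of $F$ starting at a vertex of $\source(F)=\f^i(E)$ lies in $I/(X)$; that is, every $\lambda\in\PP(E)$ with $s(\lambda)\in\f^i(E)$ and $\lambda^0\cap X=\emptyset$ lies in $I$. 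Finally, an arbitrary $\lambda\in\PP(E)$ with $s(\lambda)\in\f^i(E)$ either has $\lambda^0\cap X\neq\emptyset$, whence $\lambda\in(X)\subseteq I$, or avoids $X$, whence $\lambda\in I$ by the previous sentence. This proves $P(i)$, completing the induction. The ``in particular'' is immediate: each $v\in\f^i(E)$ is the trivial path $v$ with $s(v)=v\in\f^i(E)$, so $v\in I$.

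The step requiring the most care is the reduction to the collapse in Step 2: one must observe that the induction hypothesis forces the \emph{whole} generated ideal $(X)$ into $I$ (not just the paths literally starting in $X$), since this is what legitimises forming $I/(X)$ as a left ideal of $KF$; and one must use the tautological identity $\source(E\setminus\bigcup_{j<i}\f^j(E))=\f^i(E)$ from Definition \ref{def_ifuente}, which is precisely what collapses the general ordinal step back onto the source case of Step 1. I would stress that the argument is uniform for all $i\geq 1$ and needs no separate treatment of limit versus successor ordinals, because $X=\bigcup_{j<i}\f^j(E)$ and the defining formula for $\f^i(E)$ are the same in both cases; in particular, for this approach Lemma \ref{prop_conexion_isources} is not required.
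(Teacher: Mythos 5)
Your proof is correct, and it organises the transfinite induction differently from the paper. The paper works entirely inside $KE$: at each stage it takes a path $\lambda$ with $s(\lambda)\in\f^{i}(E)$, analyses $I\cap A\lambda$, and in the successor step extracts $\lambda$ from an element $\sum_j k_j\lambda_j\lambda+k\lambda$ of $I\cap A\lambda$ by feeding the prefixed paths $\lambda_j\lambda$ back into the induction hypothesis (which is where Lemma \ref{prop_conexion_isources} is implicitly doing work), and then asserts the limit case is analogous. You instead prove the source case once, as a lemma valid for an arbitrary graph, and at each ordinal stage collapse $E$ through $X=\bigcup_{j<i}\f^j(E)$ via Proposition \ref{prop_colapso}, using the observation that the induction hypothesis forces the whole two-sided ideal $(X)$ into $I$ so that $I/(X)$ is a legitimate left ideal of $KF\cong KE/(X)$ with the same (trivial) quotient socle; the identity $\source(E\setminus X)=\f^i(E)$ then reduces everything to the base lemma. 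What your route buys is uniformity — successor and limit ordinals are treated identically, Lemma \ref{prop_conexion_isources} is not needed, and the limit case (which the paper dispatches with a one-line remark) is genuinely covered. What the paper's route buys is that it never leaves $KE$ and gives slightly finer information about the elements of $I\cap A\lambda$. One small point worth making explicit in your write-up: in Step 1, the nonzero one-dimensional submodule $K\overline{\lambda}$ of $KG/J$ is simple as a left $KG$-module because $v\overline{\lambda}=\overline{\lambda}\neq 0$ (so it is not a trivial module), which is what places it inside $\soc_l(KG/J)$; this is the same tacit step the paper takes when it writes $(I+A\lambda)/I\cong K\lambda$.
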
 
\begin{proof}
    We will approach this proof by transfinite induction in the order of the sources. Consider $A = KE$ and take a path $\lambda$ with $s(\lambda) \in f^1(E)$ and consider $I\cap A\lambda$. We have two possibilities
\begin{itemize}
    \item If $I\cap A\lambda = \{0\}$ then $(I+A\lambda) / I \cong A\lambda / \{0\} \cong K\lambda$ and this means that $\soc_l(A/I) \neq \{0\}$ which is not possible. 
    \item If $I \cap A\lambda \neq \{0\}$ then $\lambda \in I$ by the minimality of $A\lambda$.
\end{itemize}
Consider that $I$ contains all the all the paths that start in a $i$-source. Let us take a path $\lambda$ such that $s(\lambda) \in \f^{i+1}(E)$ and consider $I \cap A\lambda$. 
\begin{itemize}
    \item If $I\cap A\lambda = \{0\}$ then $(I+A\lambda) /I \cong A\lambda$ and $\soc_l(A/I) \neq \{0\}$ and this is not possible.
    \item If $I \cap A\lambda \neq \{0\}$ then we have two possibilities. If there is $x \in I \cap A\lambda$ with $x = \sum_{j=1}^n k_j\lambda_j\lambda + k\lambda$ with $\lambda_j \neq \lambda_{j'}$, $k,k_j \in K$ and $k \neq 0$, then $\lambda = k^{-1} (x- \sum_{j=1} k_j \lambda_j\lambda) \in I$ because $\lambda_j\lambda \in I$ for every $j = 1,\ldots,n$ by our induction hypothesis. If there is no such element, this means that $K\lambda \cong A\lambda / (I\cap A\lambda) \cong (I+As(\lambda)) /I $ and $\soc_l(A/I) \neq \{0\}$ and this means that option is not possible. 
\end{itemize}
Finally, for $v \in \f^\alpha(E)$ with $\alpha$ a limit ordinal, the proof is completely analogous because Lemma \ref{prop_conexion_isources} is for sources of every order.
\end{proof}
Now we have the tools to characterise the semiartinianity of a path algebra in terms of geometric conditions of its graph. 
\begin{theorem}\label{prop_semiartiniana}
    Let $E$ be a directed graph and $A = KE$ its corresponding path algebra over the field $K$. The algebra $A$ is left semiartinian if and only if for every $u \in E^0$ there is an ordinal number $\alpha$ such that $u \in \f^\alpha (E)$.
\end{theorem}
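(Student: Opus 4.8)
The statement asserts that $A = KE$ is left semiartinian if and only if every vertex $u \in E^0$ is an $\alpha$-source for some ordinal $\alpha$. I would prove the two implications separately, using the ascending socle chain $\{S_\alpha\}$ of Definition \ref{def_chainsocle} together with the identification (noted after Definition \ref{def_ifuente}) that $S_\alpha$ is the ideal generated by $\bigcup_{i \leq \alpha} \f^i(E)$.

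\textbf{The ``only if'' direction.} Suppose $A$ is left semiartinian but some vertex $u$ is not an $i$-source for any ordinal $i$. The ascending socle chain must stabilise at some ordinal $\gamma$ (since $A$ has only a set's worth of two-sided ideals), so $S_\gamma = S_{\gamma+1}$ and hence $\soc_l(A/S_\gamma) = \{0\}$. If $S_\gamma \neq A$, semiartinianity is violated immediately, so $S_\gamma = A$; in particular $u \in S_\gamma$, the ideal generated by $\bigcup_{i \leq \gamma}\f^i(E)$. Now I would argue that the only vertices lying in this ideal are the vertices of $\bigcup_{i\le\gamma}\f^i(E)$ themselves: a vertex $u$, being an idempotent, lies in the ideal $(X)$ generated by a set $X$ of vertices only if $u\in X$ (this follows from Proposition \ref{prop_colapso}, since $KE/(X)\cong K(E\setminus X)$ kills exactly the vertices in $X$ and their incident edges, and $u\ne 0$ in $K(E\setminus X)$ when $u\notin X$). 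This forces $u \in \f^i(E)$ for some $i \leq \gamma$, contradicting our assumption.

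\textbf{The ``if'' direction.} Assume every vertex is an $\alpha$-source for some $\alpha$. Let $I$ be a proper left ideal; I must show $\soc_l(A/I) \neq \{0\}$. By Proposition \ref{lema_semiartinian} (used contrapositively), if $\soc_l(A/I) = \{0\}$ then every path $\lambda$ with $s(\lambda) \in \f^i(E)$ lies in $I$, for every ordinal $i$; in particular every vertex of $E$ lies in $I$ (since every vertex is some $\f^i(E)$, and a vertex is a length-zero path starting at itself). But then $I$ contains $\bigcup_{v\in E^0} Av = A$ (every path $\mu$ satisfies $\mu = \mu\, r(\mu) \in I$, or more simply $\mu = s(\mu)\mu$ with $s(\mu)\in I$), so $I = A$, contradicting properness. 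Hence $\soc_l(A/I) \neq \{0\}$, and $A$ is left semiartinian.

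\textbf{Main obstacle.} The heart of the argument is really Proposition \ref{lema_semiartinian}, which is already established in the excerpt, so the remaining work is the bookkeeping linking ``every vertex is an $\alpha$-source'' to ``the socle chain exhausts $A$.'' The one point requiring genuine care is the claim that a vertex $u$ belongs to the two-sided ideal generated by a set of vertices $X$ only when $u \in X$ — i.e.\ that distinct vertices stay $K$-linearly independent and non-annihilated modulo such ideals; this is where I would invoke the collapse description of Proposition \ref{prop_colapso} rather than attempt a direct computation. Everything else is transfinite bookkeeping and the observation that the ascending socle chain, being a strictly increasing chain of two-sided ideals of a fixed algebra, must terminate.
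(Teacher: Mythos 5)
Your proof is correct and follows essentially the same route as the paper: the ``if'' direction is the paper's argument verbatim (Proposition \ref{lema_semiartinian} forces $E^0\subseteq I$, hence $I=A$), and your ``only if'' direction --- stabilisation of the socle chain at $S_\gamma$, which is the ideal generated by all the $\alpha$-sources, plus the collapse argument that a vertex lies in an ideal generated by a set of vertices only if it belongs to that set --- is a repackaging of the paper's choice of the ideal generated by $Y=\bigcup_\alpha \f^\alpha(E)$ together with the observation that $E\setminus Y$ has no sources, so $\soc_l(KE/(Y))=0$. One cosmetic slip: once $E^0\subseteq I$ with $I$ a \emph{left} ideal, the correct reason a path $\mu$ lies in $I$ is your first phrasing $\mu=\mu\,r(\mu)\in A\,r(\mu)\subseteq I$; the ``more simply'' alternative $\mu=s(\mu)\mu$ would require $I$ to be a right ideal.
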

\begin{proof}
    Consider that $KE$ is left semiartinian and suppose that there is a set of vertices $X \subset E^0$ such that for every $u \in X$ there is not any ordinal number $\alpha$ such that $u \in \f^\alpha (E)$. If we consider $I$ the ideal generated by the vertices $Y = E^0 \setminus X$ we have that $ KE/I \cong K (E\setminus Y)$. There are not any sources in the graph $E \setminus Y$ and by Corollary \ref{corolario_zocalo_pathalgebra} we have that $\soc_l(KE/I) \cong \soc_l(K(E\setminus Y)) = \{0\}$ but this is only possible if $X = \emptyset$.

    For the converse, consider a left ideal $I$ such that $\soc_l(KE/I) = \{0\}$. By Proposition \ref{lema_semiartinian} the ideal $I$ contains all the $\alpha$-sources, in particular, $E^0 \subset I$ by our hypothesis which implies that $I = KE$. Proving that $KE$ is left semiartinian. 
\end{proof}
\begin{remark}\rm
If the algebra $KE$ is left semiartinian, we will see that this implies that the graph $E$ has no cycles: if we consider the ideal $I$ generated by all the vertices that do not belong to a cycle, denoted by $X$, we have that $KE/I \cong K(E\setminus X)$ has a trivial socle since $E\setminus X$ is a graph without sources (each connected component in the graph is strongly connected). The only possible way for this to happen is that $I = KE$, that is, $X = E^0$. In addition, this result, together with Theorem \ref{prop_semiartiniana}, implies that the vertices that are basis of a cycle are not sources of any order. Besides, if the graph is finite, the algebra $KE$ is not only left semiartinian, Proposition \ref{prop_caracterizacion_artiniano} implies that $KE$ is a finite dimensional, thus, we have that $E$ is a tree.
\end{remark}
\begin{corollary}
    Consider $E$ a finite directed graph and $A = KE$ the corresponding path algebra over the field $K$. The following assertions are equivalent:
    \begin{enumerate}
        \item The algebra $A$ is finite dimensional.
        \item The algebra $A$ is left Artinian.
        \item The algebra $A$ is left semiartinian.
        \item The graph $E$ is acyclic. 
    \end{enumerate}
\end{corollary}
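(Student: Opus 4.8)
The plan is to split the four-way equivalence into three easy pieces: the ``finiteness side'' $(1)\Leftrightarrow(2)$, the links between left semiartinianity and acyclicity that are essentially already recorded, and the one implication $(4)\Rightarrow(3)$ that needs a (short) new argument.

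For $(1)\Leftrightarrow(2)\Leftrightarrow(4)$ I would just feed the hypothesis into Proposition~\ref{prop_caracterizacion_artiniano}: since $E$ is finite we have $|E^0\cup E^1|<\infty$, so that proposition already says $A$ is finite dimensional iff $A$ is left artinian iff $E$ contains no closed path. The only remaining observation is that, for an arbitrary graph, ``no closed path'' is equivalent to ``no cycle'', i.e. to acyclicity: a cycle is in particular a closed path, and conversely any closed path $f_1\cdots f_n$ meets only finitely many vertices, so two of $s(f_1),r(f_1),\dots,r(f_n)$ coincide and an appropriate subword is a cycle.

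For $(3)\Rightarrow(4)$ I would quote the Remark following Theorem~\ref{prop_semiartiniana}: if $KE$ is left semiartinian and $X\subseteq E^0$ is the set of vertices lying on no cycle, then $KE/(X)\cong K(E\setminus X)$ has zero left socle by Corollary~\ref{corolario_zocalo_pathalgebra}, because $E\setminus X$ has no sources (each component is strongly connected); this forces $(X)=KE$, hence $X=E^0$ and $E$ is acyclic. Equivalently, via Theorem~\ref{prop_semiartiniana} directly, a vertex lying on a cycle is never an $i$-source, so its existence would contradict the characterisation there. Together with the previous paragraph this also yields $(3)\Rightarrow(1),(2)$.

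The remaining and, to my mind, only substantive step is $(4)\Rightarrow(3)$, and this is exactly where finiteness of $E$ is used. Assuming $E$ finite and acyclic, by Theorem~\ref{prop_semiartiniana} it suffices to show that every $u\in E^0$ lies in $\f^i(E)$ for some (then necessarily finite) ordinal $i$. The idea is the standard ``peel off the sources'' argument: a nonempty finite acyclic graph has a source (otherwise every vertex has an incoming edge, and walking backwards along edges from any vertex repeats a vertex within $|E^0|$ steps, producing a closed path, hence a cycle); the collapse $E\setminus\source(E)=E\setminus\f^1(E)$ is again finite and acyclic with strictly fewer vertices; iterating, the graph $E\setminus\bigcup_{j\le i}\f^j(E)$ loses at least one vertex at each step while nonempty, so $\bigcup_{i\le k}\f^i(E)=E^0$ for some $k\le|E^0|$, and Theorem~\ref{prop_semiartiniana} then gives that $A$ is left semiartinian. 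The point worth stressing is that it is precisely this exhaustion that can fail for infinite acyclic graphs (the infinite line of Figure~\ref{fig_lineainfinita} is acyclic but not left semiartinian), so $|E^0|<\infty$ is what makes the source-removal process terminate; the degenerate case $E^0=\emptyset$, $A=0$, is trivial.
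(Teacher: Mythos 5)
Your proof is correct and follows exactly the route the paper intends: the corollary is stated without proof as a collation of Proposition~\ref{prop_caracterizacion_artiniano}, Theorem~\ref{prop_semiartiniana} and the preceding Remark, which is precisely what you assemble (with the source-peeling argument supplying the only genuinely new step, $(4)\Rightarrow(3)$).
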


\begin{example}
Consider the directed graph $L$ which set of vertices is $L^0 = \{v\} \cup \{u_{i,j} \colon i \in \mathbb{N} \setminus \{0\}, 1\leq j \leq i\}$ and set of edges is $L^1 = \{f_{i,j} \colon i \in \mathbb{N}\setminus \{0\}, 1 \leq j \leq i\}$ with $s(f_{i,j}) = u_{i,j}$ for every pair $(i,j)$, $r(f_{i,j}) =u_{i,j+1}$ if $j < i$ and $r(f_{i,i}) = v$ for every $i \in \mathbb{N}$. We denote this as the Levi graph. One can think of the Levi graph as an countable infinite union of increasing finite lines connected through the same vertex $v$. The reader can find a graphic representation of the Levi graph in Figure \ref{fig_levigraph}. For a finite ordinal $\alpha$ we have that $\f^\alpha(L) = \{u_{i,\alpha} \colon i \geq \alpha\}$ and for the limit ordinal $\omega = \mathbb{N}$ we have that $\f^\omega (L) = \{v\}$. In particular, $\f^\omega(L) \cup \left ( \bigcup_{\alpha =1}^\infty \f^\alpha(L)\right ) = L^0$, that is, every vertex is a $i$-source and the path algebra $KL$ is semiartinian.

    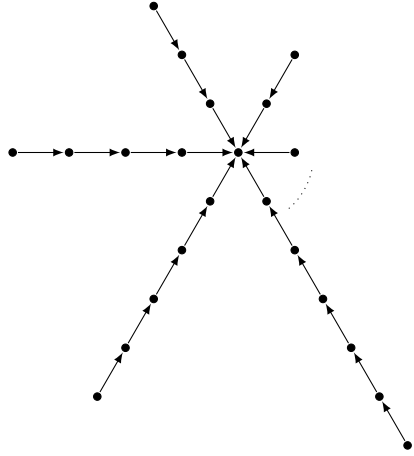
\begin{figure}[ht]
    \centering
    \begin{tikzpicture}[shorten <=2pt,shorten >=2pt,>=latex, node distance={15mm}, main/.style = {draw, fill, circle, inner sep = 1pt}]

\node[main] (0) at (0,0) {};
\def \n {6}
\def \margin {8} 
\foreach \radius in {1,...,\n}
{
\foreach \s in {\radius,...,\n}
{
  \node[main] at ({360/\n * (\s - 1)}:0.75*\radius) {};
  \draw[->] ({360/\n * (\s - 1)}: 0.75*\radius) to ({360/\n * (\s - 1)}:{0.75*\radius-0.75});
}
   }
  \draw[dotted] ({360/\n * (\n - 1)+\margin}:1) 
    arc ({360/\n * (\n - 1)+\margin}:{360/\n * (\n)-\margin}:1);

\end{tikzpicture}
    \caption{Levi graph. }
    \label{fig_levigraph}
\end{figure}
\end{example}

 In general, being acyclic is not enough to imply that the path algebra is semiartinian (if the graph is not finite).  In Figure \ref{fig_lineainfinita}, we have an example of a directed graph which its corresponding path algebra is not semiartinian.
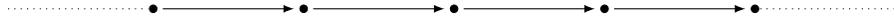
\begin{figure}[ht]
    \centering
    \begin{tikzpicture}
[shorten <=2pt,shorten >=2pt,>=latex, node distance={15mm}, main/.style = {draw, fill, circle, inner sep = 1pt}]

\foreach \i in {1,...,5}
{
\node[main] ({\i}) at ({2 * \i},0) {};
}
\draw[->] (1) to (2);
\draw[->] (2) to (3);
\draw[->] (3) to (4);
\draw[->] (4) to (5);
\draw[dotted] (10,0) to (12,0);
\draw[dotted] (0,0) to (2,0);
    \end{tikzpicture}
    \caption{Directed graph $E$ such that there is no sources and no cycles.}
    \label{fig_lineainfinita}
\end{figure}
It is very important to highlight the fact that all these previous result have their corresponding dual. Instead of talking about left semiartinian, right socle and sources of order $i$, we refer to right semiartinian, right socle and sinks of order $i$. All of the proofs above are analogous for the dual results just by taking the proper considerations.

If the directed graph $E$ and $KE$ is left semiartinian, then we can consider a total ordering in the set $E^0$ such that the adjacency matrix $A_E$ is upper triangular. We just need to order each set $\f^\alpha(E)$ (Zermelo's Theorem) and consider if $u \in \f^i(E)$ and $v \in \f^j(E)$ such that $i< j$, then $u<v$ and if $u,v \in \f^i(E)$ we just consider the previous established order. Following this order of the vertices and by Lemma \ref{prop_conexion_isources} the adjacency matrix $A_E$ is upper triangular.

\section{Prime, semiprime and primitive path algebras}\label{sec_prime}
Having characterized certain {finiteness} conditions on $KE$ (Artinian and semiartinian) directly from its graph, we pursue now to characterise {perfection} algebraic properties of $KE$, again, through conditions on their corresponding graphs. 
The first condition that one could analyze when studying a given algebraic structure is its semiprime character. 

\begin{definition}
An associative algebra is said {\it semiprime} if and only if for every $a \in A$ we have that $aAa = 0$ implies that $a = 0$, in other words, if $a \neq 0$, then $aAa \neq 0$.
An associative algebra is said {\it prime} if and only if for every $a,b \in A$ with $aAb = 0$, then $a = 0$ or $b = 0$, in other words, if $a \neq 0$ and $b \neq 0$, then $aAb \neq 0$.
\end{definition}
These conditions are well known to be equivalent to 
\begin{enumerate}
\item For any ideal of $A$, $I^2=0$ implies $I=0$ (semiprimeness)
\item For any two ideals $I$ and $J$ of $A$, $IJ=0$ implies $I=0$ o $J=0$ (primeness).
\end{enumerate}
\begin{remark}\label{lemma_semiprimo}
In case of the algebra $A$ is graded, there is a new characterisation of primeness and semiprimeness in terms of the homogeneous components. Consider $A = \bigoplus_{m \in \mathbb{Z}}A_m$ a $\mathbb{Z}$-graded associative algebra. Then $A$ is semiprime if and only if $a \in A_m$ with $aAa =0$ implies $a = 0$ for every $m \in \mathbb{Z}$. 
Consider $A = \bigoplus_{m \in \mathbb{Z}}A_m$ a $\mathbb{Z}$-graded associative algebra. Then $A$ is prime if and only if $a \in A_n,b \in A_m$ with $aAb =0$ implies $a = 0$ or $b = 0$ for every $n,m \in \mathbb{Z}$.
\end{remark}

As we have already mentioned, the next result is proven by M. Siles in \cite{prop_semiprimo_mercedes} using Leavitt path algebras. In this case, we prove it just making use of the path algebra.
\begin{proposition}\label{prop_caracterizacion_semiprima}

 Let $E$ be a directed graph and $A = KE$ the corresponding path algebra over the field $K$. We have that $A$ is semiprime if and only if for every path $\mu$ in $E$ there is another path $\lambda$ with $  s(\lambda) = r(\mu) \ \text{and} \ r(\lambda) = s(\mu)$. In other words, if there is a path $\mu$ from $u$ to $v$, with $u,v \in E^0$, then there is another path $\lambda$ from $v$ to $u$.
\end{proposition}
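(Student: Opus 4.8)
The plan is to use the $\mathbb{Z}$-grading of $A = KE$ by path length (Remark \ref{lemma_semiprimo}), which reduces the semiprimeness condition to testing homogeneous elements. First I would prove the easy direction by contrapositive: suppose there is a path $\mu$ from $u$ to $v$ with no return path from $v$ to $u$. Consider the homogeneous element $a = \mu$ (a single basis path). For any path $\nu$ appearing in a general element of $A$, the product $\mu \nu \mu$ is nonzero only if $\nu$ is a path from $v = r(\mu)$ to $u = s(\mu)$; by assumption no such $\nu$ exists, so $\mu A \mu = 0$ while $\mu \neq 0$, contradicting semiprimeness. Hence the return path must exist for every $\mu$.

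For the converse, assume that every path has a return path, and let $a \in A$ be a nonzero homogeneous element of degree $n$ with $aAa = 0$; I want a contradiction. Write $a = \sum_{i=1}^{m} k_i \lambda_i$ with $k_i \in K^{\times}$ and the $\lambda_i$ distinct paths of length $n$. The key step is to isolate a single monomial: multiply $a$ on the left and right by suitable vertices $u_0 = s(\lambda_1)$ and $v_0 = r(\lambda_1)$ to get $u_0 a v_0 = \sum_{i \in S} k_i \lambda_i$ where $S$ is the (nonempty) set of indices with $s(\lambda_i) = u_0$ and $r(\lambda_i) = v_0$; this is still a homogeneous element annihilated in the sense $u_0 a v_0 \cdot A \cdot u_0 a v_0 = 0$ since it lies in $aAa$'s "shadow" — more precisely $u_0 a v_0 \in u_0 A v_0$ and $(u_0 a v_0) A (u_0 a v_0) \subseteq u_0 (aAa) v_0 = 0$. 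Now pick the return path $\lambda$ from $v_0$ to $u_0$ guaranteed by hypothesis, and examine $(u_0 a v_0)\,\lambda\,(u_0 a v_0) = \sum_{i,j \in S} k_i k_j\, \lambda_i \lambda \lambda_j$. Each summand $\lambda_i \lambda \lambda_j$ is a genuine nonzero path (composability holds because $r(\lambda_i) = v_0 = s(\lambda)$ and $r(\lambda) = u_0 = s(\lambda_j)$), and these paths of length $2n + \len(\lambda)$ are pairwise distinct for distinct pairs $(i,j)$ because a path of this form determines $\lambda_i$ as its initial segment of length $n$ and $\lambda_j$ as its terminal segment of length $n$. Therefore the sum is a nontrivial linear combination of distinct basis paths, hence nonzero, contradicting $(u_0 a v_0) A (u_0 a v_0) = 0$.

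The main obstacle is the bookkeeping in the converse: making sure that after corner-cutting by $u_0$ and $v_0$ the element is still nonzero and still satisfies the annihilation condition, and then verifying that the paths $\lambda_i \lambda \lambda_j$ really are all distinct so that no cancellation can occur in the double sum. The distinctness argument is where some care is needed — one must use that all $\lambda_i$ have the same fixed length $n$, so that reading off the first $n$ edges and the last $n$ edges of $\lambda_i \lambda \lambda_j$ unambiguously recovers the pair $(i,j)$; this is exactly why passing to a homogeneous element via the grading (rather than working with a general element of $A$) is essential. Once distinctness is established, nonvanishing of the coefficient of, say, $\lambda_1 \lambda \lambda_1$ (namely $k_1^2 \neq 0$) finishes the contradiction.
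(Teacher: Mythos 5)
Your proposal is correct and follows essentially the same route as the paper: both directions use the $\mathbb{Z}$-grading by path length, the existence of a return path $\lambda$, and the observation that the composed paths $\mu_i\lambda\mu_j$ are pairwise distinct because all the $\mu_i$ share the same length, so no cancellation can occur. The only cosmetic difference is that you isolate the relevant monomials by cutting corners with the vertices $u_0$ and $v_0$, whereas the paper keeps the full product $a\lambda a$ and restricts the double sum to the nonempty set $J$ of composable index pairs; the two devices perform the same bookkeeping.
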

\begin{proof}
First, consider $A$ semiprime and take any non trivial path $\mu$. By definition, this path verifies that $\mu A\mu \neq 0$. This is only possible if there is a path $\lambda$ with $\mu \lambda \mu \neq 0$, and then, $s(\lambda) = r(\mu)$ and $r(\lambda) = s(\mu)$.

On the other hand, consider that for every path $\mu$ from $u$ to $v$ there is another path $\lambda$ from $v$ to $u$ and let us take a non trivial element $a \in A_m$. We can consider $a = \sum_{i = 1}^n k_i \mu_i$ with $k_i \in K\setminus \{0\}$ and all $\mu_i$ paths of length $m$ with $\mu_i \neq \mu_j$ for $i\neq j$. Choose a certain path $\mu_l$ with $1\leq l\leq n$. As our hypothesis says, there is a path $\lambda$ with $s(\lambda) = r(\mu_l)$ and $r(\lambda)= s(\mu_l)$. If we compute $a\lambda a$ we have that
\begin{equation*}
    a \lambda a= \left ( \sum_{i = 1}^n k_i \mu_i\right ) \lambda \left (\sum_{j = 1}^n k_j \mu_j\right ) =  \sum_{i,j = 1}^n k_ik_j \mu_i \lambda \mu_j = \sum_{(i,j) \in J} k_ik_j \mu_i \lambda \mu_j,
\end{equation*}
where $J = \{(i,j) \colon r(\mu_i) = r(\mu_l), s(\mu_j) = s(\mu_l)\}$. The set $J \neq \emptyset$ because $(l,l) \in J$. Also, $k_ik_j \neq 0$ and all the paths $\mu_i \lambda \mu_j$ are different because all the paths $\mu_i$ are different but with the same length.
We have then a non trivial linear combination of different paths (linearly independents) and so $a\lambda a = \sum_{(i,j) \in J} k_ik_j \mu_i \lambda \mu_j \neq 0$. This implies that $aAa \neq 0$ and $A$ is semiprime following Lemma \ref{lemma_semiprimo}. 
\end{proof}

Now let us focus on primeness of $KE$. Again, there is an approach (in a specific ambient) in \cite[Theorem 3.5]{prop_semiprimo_mercedes}. On the other hand, 
the reference \cite{quivlecs} mentions, but not proves, that primeness of $KE$ is equivalent to the strongly connectedness of the graph $E$.
Also P. Smith in \cite{paulsmith}  proves the equivalence of primeness of $KE$ with the strongly connectedness of $E$. Though the ambient graph in the aforementioned references is supposed to be finite, the statement in \cite[Lemma 2.11]{paulsmith} is also true for arbitrary graphs. This reference is not easily accessible and as far as we know has not been published in a regular journal or book. That is why we consider suitable to include an ample proof of the result.
\begin{proposition}\label{prop_caracterizacion_prima}
Let $E$ be a directed graph and $A = KE$ the corresponding path algebra over the field $K$. We have that $A$ is prime if and only if for every pair of vertices $u,v \in E^0$ there is a path $\lambda$ with $s(\lambda) = u$ and $r(\lambda) = v$. That is $KE$ is prime if and only if $E$ is strongly connected.
\end{proposition}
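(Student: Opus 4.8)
The plan is to prove both implications using the graded structure of $A = KE$ (by path length) together with Remark~\ref{lemma_semiprimo}, mirroring the style of the proof of Proposition~\ref{prop_caracterizacion_semiprima}. For the \emph{easy} direction, suppose $A$ is prime. Given two vertices $u,v \in E^0$, primeness applied to the homogeneous elements $u, v \in A_0$ forces $uAv \neq 0$ (note $u,v$ are nonzero idempotents), and any nonzero element of $uAv$ is a linear combination of paths $\lambda$ with $s(\lambda) = u$ and $r(\lambda) = v$; in particular at least one such path exists, so $E$ is strongly connected.

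For the \emph{converse}, assume $E$ is strongly connected and take nonzero homogeneous elements $a \in A_n$ and $b \in A_m$; by Remark~\ref{lemma_semiprimo} it suffices to produce a path $\lambda$ with $a\lambda b \neq 0$. Write $a = \sum_{i=1}^r k_i \mu_i$ and $b = \sum_{j=1}^s l_j \nu_j$ with the $\mu_i$ distinct paths of length $n$, the $\nu_j$ distinct paths of length $m$, and all coefficients nonzero. Fix one $\mu_{i_0}$ and one $\nu_{j_0}$; by strong connectedness there is a path $\lambda$ from $r(\mu_{i_0})$ to $s(\nu_{j_0})$. Then $a\lambda b = \sum_{i,j} k_i l_j\, \mu_i \lambda \nu_j$, where the nonzero summands are exactly those indexed by pairs $(i,j)$ with $r(\mu_i) = r(\mu_{i_0})$ and $s(\nu_j) = s(\nu_{j_0})$; this index set is nonempty since it contains $(i_0,j_0)$. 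The surviving products $\mu_i \lambda \nu_j$ are pairwise distinct paths: from such a product one recovers $\mu_i$ as the initial subpath of length $n$ and $\nu_j$ as the terminal subpath of length $m$ (here using that all $\mu_i$ have the same length $n$ and all $\nu_j$ the same length $m$), so different pairs give different paths. Hence $a\lambda b$ is a nontrivial $K$-linear combination of distinct paths, so $a\lambda b \neq 0$, whence $aAb \neq 0$ and $A$ is prime.

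The main subtlety is the same one that appears in Proposition~\ref{prop_caracterizacion_semiprima}: one must be careful that no cancellation occurs among the terms $\mu_i\lambda\nu_j$. Restricting to homogeneous components (which is legitimate by Remark~\ref{lemma_semiprimo}) is what makes this work, since it guarantees the length of $\mu_i$ and of $\nu_j$ is fixed and hence the decomposition of a product $\mu_i\lambda\nu_j$ into its three pieces is unambiguous. One should also double-check the trivial edge cases — for instance when $\mu_{i_0}$ or $\nu_{j_0}$ is itself a trivial path (length $0$), in which case $r(\mu_{i_0}) = s(\mu_{i_0})$ and $\lambda$ may be taken trivial if $u = v$ — but these present no real difficulty.
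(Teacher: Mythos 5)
Your proposal is correct and follows essentially the same route as the paper's own proof: the forward direction applies primeness to the idempotents $u,v$ to get $uAv\neq 0$, and the converse reduces to homogeneous elements via the graded criterion, picks $\lambda$ joining $r(\mu_{i_0})$ to $s(\nu_{j_0})$, and uses the fixed lengths $n$ and $m$ to see that the surviving products $\mu_i\lambda\nu_j$ are pairwise distinct, hence $a\lambda b\neq 0$. Your explicit remark on recovering $\mu_i$ and $\nu_j$ as the initial and terminal subpaths is a slightly more careful justification of the distinctness step than the paper gives, but the argument is the same.
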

\begin{proof}
Let us consider that $A$ is a prime algebra and take $u,v \in E^0$, then we have $uAv \neq 0$ and necessarily there is a path $\mu$ with $u\lambda v \neq 0$. As a consequence, $s(\lambda) = u$ and $r(\lambda) = v$.

On the other hand, take into account that for every pair of vertices there is a path that connect them. Consider $a \in A_n$ and $b \in A_m$ with $a \neq 0$, $b \neq 0$ and $a = \sum_{i=1}^k r_i \mu_i$ and $b = \sum_{j = 1}^l s_j \nu_j$ with $r_i,s_j \in K\setminus \{0\}$ for all $i= 1,\ldots,k$ and $j= 1,\ldots,l$. Also, all the paths in $a$ are different but with the same length $n$ and all the paths in $b$ are different with the same length $m$. Take two paths $\mu_{i_0}$ and $\nu_{j_0}$ in the respective linear combinations. Consider $u = r(\mu_{i_0})$ and $v = s(\nu_{j_0})$. As our hypothesis says, there is a path $\lambda$ with $s(\lambda) = u$ and $r(\lambda) = v$. Then
\begin{equation*}
    a\lambda b = \left ( \sum_{i = 1}^l r_i \mu_i\right ) \lambda \left (\sum_{j = 1}^ks_j \nu_j\right ) =  \sum_{i,j = 1}^{l,k} r_is_j \mu_i \lambda  \nu_j = \sum_{(i,j) \in J} r_is_j \mu_i \lambda \nu_j.
\end{equation*}
where $J = \{(i,j) \colon r(\mu_i) = s(\lambda), s(\nu_j) = r(\lambda)\}$. We have that $J \neq \emptyset$ because $(i_0,j_0) \in J$. Besides, all the paths $\mu_i\lambda \nu_j$ are different for the different pairs $(i,j)$ because $\mu_i \neq \mu_{i'}$ if $i \neq i'$ (but with the same length) and $\nu_j \neq \nu_{j'}$ if $j \neq j'$ (but with the same lenght too). In consequence, we have a nontrivial linear combination of different paths, then $a \lambda b \neq 0$ and $aAb \neq 0$. Following Remark \ref{lemma_semiprimo}, this means that $A$ is prime.  
\end{proof}

Primeness and semiprimenss of algebras are related,  prime implies semiprime, but they are also linked to simpleness and primitiveness of algebras. As we know, simple implies primitive and primitive implies prime. In the path algebras context, they are even more related. As we will see, the basic blocks of what semiprime path algebras are constructed are: simple, primitive and prime algebras. To be more precise we will see that any semiprime algebra is a direct sum of simple, path algebra of cycle 
and primitive ones.

Let $E$ be a connected directed graph and $A = KE$ the corresponding path algebra over the field $K$. If we consider that $E$ is formed by just one vertex, that is, $E^0 = \{u\}$ and $E^1 = \emptyset$, it is easy to see that $KE = Ku \cong K$ and then, $A$ is simple. On the converse, consider $A$ being a simple algebra. Let us take $u \in E^0$ and define the two-sided ideal $I = AuA$. Under our hypothesis, this means that $I = 0$ or $I = A$. It is clear that $I \neq 0$ because $Ku \subset I$. If there is another vertex $v \in E^0$ with $v \neq u$, we have that $v \notin I$ and $I \neq A$. Finally, $E^0 = \{u\}$. In this way, we have that the algebra $A =KE$ is simple if and only if the graph $E$ is just a vertex.

Once we have characterised prime and semiprime path algebras, the next logic step is to study primitive path algebras. The associative context of the path algebras allow us to use two different characterisations of primitive algebras. One in terms of representations and the other one in terms of ideals. 

\begin{definition}
    An associative ring $R$ is said \emph{left (resp. right) primitive} if there is a faithful simple left (resp. right) $R$-module.
\end{definition}
 In the Leavitt path algebras context, the primitive algebras have already been characterised. This is stated in Theorem \cite[4.4.10]{lpa}. The Leavitt path algebra $L_K(E)$ is primitive if and only if the graph $E$ is downward directed and satisfies Condition $(L)$. The graph $E$ is \emph{downward directed} if we consider two vertices $u,v \in E^0$ there is a third vertex $w \in E^0$ and two paths $\lambda,\mu \in \PP(E)$ with $s(\lambda) = u$, $s(\mu) = v$ and $r(\lambda) = r(\mu) = w$. This is also equivalent to $L_K(E)$ being a prime algebra. On the other hand, condition $(L)$ means that every cycle has an exit. In the path algebras context, characterisation of primitive algebras is amusingly similar to the Leavitt path algebra case, but adapting condition $(i)$ to the equivalent in path algebras. 
 
It is also important to highlight that the study of primitive Leavitt path algebras is for row finite directed graphs, but for the path algebras context we do not need any conditions of finiteness.

\begin{theorem}\label{teorema_primitiva}
    Let $E$ be a directed graph and $K$ any field. The path algebra $KE$ is left primitive if and only if
    \begin{enumerate}
        \item[(i)] $E$ is strongly connected, and
        \item[(ii)] $E$ satisfies Condition $(L)$.
    \end{enumerate}
    \begin{proof}
        Let $E$ be a row-finite directed graph and $K$ any field. Consider that the path algebra $A=KE$ is left primitive, in particular, this implies that $A$ is prime \cite[Propostion 11.6]{lam1991first}. Following Proposition \ref{prop_caracterizacion_prima}, the graph $E$ must be strongly connected. On the other hand, suppose that $A$ is a primitive algebra and there is a cycle without exit, this mean that there is a vertex $u \in E^0$ with the corner $uAu \cong K[x]$ which is not a left primitive algebra, this enters in contradiction with $A$ being left primitive because all the corners must be left primitive.

        For the converse, let us suppose that $E$ satisfies conditions $(i)$ and $(ii)$. Condition $(i)$ implies that $A$ is prime algebra and this allow us to use the result in \cite[1.2]{anquela1998local}. If we prove that there is an idempotent element $e \in A$ with $eAe$ primitive, then $A$ is primitive. Consider a vertex $u \in E^0$, we have that $uAu \cong \A(X)$ with $X$ the set of closed paths $\lambda \in \PP(E)$ with $s(\lambda) = r(\lambda) = u$ and go through $u$ just these two times (see Lemma \ref{prop_uAu_Ass}). Conditions $(i)$ and $(ii)$ imply that $|X| \geq 2$ and this implies that $uAu$ is primitive. This last result was proved by E. Formanek and appears in \cite[p. 185]{lam1991first} before Theorem \cite[11.27]{lam1991first}.
    \end{proof} 
\end{theorem}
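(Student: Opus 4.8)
The plan is to prove both implications by reducing everything to two facts already available: the characterisation of primeness in terms of strong connectedness (Proposition~\ref{prop_caracterizacion_prima}), and the description of corners $uAu$ as free associative algebras $\A_K(X_u)$ (Lemma~\ref{prop_uAu_Ass}). The starting observation is that for an idempotent $e$ in an (idempotent, or more precisely semiprime) algebra $A$, primitivity transfers in both directions along corners under a primeness hypothesis: if $A$ is prime then $A$ left primitive implies $eAe$ left primitive for every nonzero idempotent $e$, and conversely if some $eAe$ is left primitive then $A$ is left primitive. The forward direction of the corner lemma is elementary (cut down a faithful simple module by $e$), and the converse is the result of Anquela–García quoted as \cite[1.2]{anquela1998local}; these are the two engines I would invoke.

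First I would do the forward implication. Assume $KE$ is left primitive. A left primitive ring is prime (\cite[Proposition 11.6]{lam1991first}), so by Proposition~\ref{prop_caracterizacion_prima} the graph $E$ is strongly connected, giving~(i). For~(ii), suppose toward a contradiction that $E$ has a cycle $c$ without exit, based at a vertex $u$. Because $c$ has no exit, the set $X_u$ of Lemma~\ref{prop_uAu_Ass} consists of exactly one element (a single power of the cycle $c$), so $uAu \cong \A_K(X_u) \cong K[x]$. Since $A$ is left primitive, the corner $uAu$ must be left primitive as well (the forward corner transfer), but $K[x]$ is commutative and not a field, hence not primitive — contradiction. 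Therefore every cycle of $E$ has an exit, which is Condition~$(L)$.

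For the converse, assume $E$ satisfies~(i) and~(ii). By~(i) and Proposition~\ref{prop_caracterizacion_prima}, $A = KE$ is prime, which is exactly the hypothesis needed to apply \cite[1.2]{anquela1998local}: it suffices to exhibit a nonzero idempotent $e \in A$ with $eAe$ left primitive to conclude that $A$ itself is left primitive. Pick any vertex $u \in E^0$ and set $e = u$. By Lemma~\ref{prop_uAu_Ass}, $uAu \cong \A_K(X_u)$ where $X_u$ is the set of closed paths at $u$ passing through $u$ only at their endpoints. Here I would argue that~(i) together with~(ii) forces $|X_u| \geq 2$: strong connectedness guarantees at least one closed path at $u$, hence at least one element of $X_u$; and if there were exactly one, that element would be a cycle based at $u$ with no exit (any exit would, combined with strong connectedness, produce a second, distinct return path to $u$ hitting $u$ only at its ends), contradicting Condition~$(L)$. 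With $|X_u| \geq 2$, the free associative algebra $\A_K(X_u)$ on at least two generators is left primitive — this is the theorem of Formanek cited on \cite[p.~185]{lam1991first}. Hence $uAu$ is left primitive and the Anquela–García criterion yields that $A = KE$ is left primitive.

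The main obstacle I anticipate is the combinatorial step $|X_u| \geq 2$ in the converse: one must check carefully that, in a strongly connected graph where the cycle through $u$ has an exit, this exit genuinely yields a second closed path at $u$ that is ``simple through $u$'' in the sense of Lemma~\ref{prop_uAu_Ass}, rather than merely a longer closed path that revisits $u$. The cleanest way is to take the exit edge $f$ at a vertex $w$ on the cycle, follow it, then use strong connectedness to return from $r(f)$ to $u$ along a path that avoids $u$ until the very end (possible because one can always truncate a returning path at its first visit to $u$); splicing the cycle-arc from $u$ to $w$, the exit $f$, and this return path gives an element of $X_u$ distinct from the original cycle power. Everything else is a routine invocation of the quoted results; the delicate point is purely this path-surgery argument, and it is essentially the same manipulation used in the Leavitt path algebra proof of the analogous statement.
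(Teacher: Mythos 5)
Your proposal is correct and takes essentially the same route as the paper's proof: primitive $\Rightarrow$ prime $\Rightarrow$ strongly connected via Proposition~\ref{prop_caracterizacion_prima}, the corner $uAu\cong K[x]$ to rule out exitless cycles, and for the converse the Anquela--Garc\'{\i}a local-to-global criterion applied to $uAu\cong\A_K(X_u)$ with $|X_u|\geq 2$, concluded by Formanek's primitivity of free algebras. The only point where you go beyond the paper is in spelling out the path-surgery argument showing $|X_u|\geq 2$, which the paper merely asserts.
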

 
 In general, left primitivity and right primitivity are not equivalent. There are examples of left primitive algebras that are not right primitive \cite{bergman1964ring}. In the Leavitt path algebras context, the existence of an involution makes left primitivy and right primitivity notions equivalent. Despite the fact that in the path algebras context we do not have an involution, the symmetry in the conditions of Theorem \ref{teorema_primitiva} makes left and right primitivity equivalent. That is to say, the dual condition in Theorem \ref{teorema_primitiva} would be that every cycle has an entry. However, if the graph $E$ is strongly connected, every cycle having and entry is equivalent to every cycle having an exit, which makes both notions equivalent.

Let us consider a directed graph $E$ such that its path algebra $A = KE$ is semiprime. We can consider each connected component in the graph $E$ and describe $A$ as a directed sum of path algebras. That is, if $\{E_i\}_{i\in I}$ are the connected components of $E$, then $A = \bigoplus_{i\in I} A_i$ with $A_i = KE_i$. If $A$ is semiprime, Proposition \ref{prop_caracterizacion_semiprima} implies that each $E_i$ is strongly connected, that is, each $A_i$ is a prime algebra (Proposition \ref{prop_caracterizacion_prima}). In this case, the algebra $A_i$ can be simple, primitive or prime. If $E_i$ is just a vertex, then the algebra is simple; if $E_i$ has just one cycle, then $E_i = C_n$ and $A_i$ is prime; if $E_i$ has more than just one cycle, then every cycle has exit and $A_i$ is a primitive algebra. In conclusion, we have the following theorem.

\begin{theorem}[\bf Structure Theorem for semiprime path algebras]\label{teorema_estructura_semiprima}  Let $E$ be a directed graph and $K$ a field. Every semiprime path algebra $KE$ can be decomposed as
\begin{equation*}
    KE = \bigoplus_{i \in I} Ku_i \oplus \bigoplus_{j\in J} KC_{n_j} \oplus \bigoplus_{l \in L} KE_l
\end{equation*}
with $I$ the set of indices such that $E_i^0 = \{u_i\}$ for every $i \in I$, $J$ the set of indices such that $E_j = C_{n_j}$ for every $j \in J$ and $L$ the set of indices such that $KE_l$ is primitive.\end{theorem}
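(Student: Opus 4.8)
The plan is to assemble the theorem from the characterisations already established in this section, so the work is largely bookkeeping rather than new argument. First I would decompose $E$ into its connected components $\{E_i\}_{i\in I'}$, which yields $KE=\bigoplus_{i\in I'}KE_i$ as $K$-algebras; this reduces everything to understanding each $KE_i$ under the hypothesis that $KE$ is semiprime. Since an algebra is semiprime precisely when each of its ideal direct summands is semiprime, each $KE_i$ is semiprime, and by Proposition \ref{prop_caracterizacion_semiprima} this forces each connected $E_i$ to have the ``return path'' property: whenever there is a path from $u$ to $v$ there is a path from $v$ to $u$. Combined with connectedness, this is exactly strong connectedness, so by Proposition \ref{prop_caracterizacion_prima} every $KE_i$ is prime.

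Next I would trichotomise each strongly connected component $E_i$ according to how many cycles it contains. If $E_i$ has no edges at all, then (as observed in the discussion preceding Definition \ref{teorema_primitiva}, i.e.\ in the simplicity paragraph) $E_i$ is a single vertex $u_i$ and $KE_i\cong K$ is simple; these indices form $I$. If $E_i$ contains a cycle but, up to the cyclic relabelling, only ``one'' cycle --- meaning no cycle has an exit --- then strong connectedness forces $E_i$ to be precisely a cycle $C_{n_j}$, and $KE_i=KC_{n_j}$, which is prime by Proposition \ref{prop_caracterizacion_prima} (and has nontrivial centroid $K[x]$ by Theorem \ref{teo_centroide}, so it is genuinely not in the other two classes); these indices form $J$. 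In the remaining case $E_i$ contains a cycle and some cycle has an exit, so $E_i$ is strongly connected and satisfies Condition $(L)$; then Theorem \ref{teorema_primitiva} gives that $KE_i$ is (left) primitive; these indices form $L$. Reindexing the direct sum over the partition $I'=I\sqcup J\sqcup L$ gives exactly the stated decomposition.

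The one step that needs a genuine (though short) argument is the claim in the middle case that ``$E_i$ strongly connected with every cycle exit-free'' implies ``$E_i$ is literally a cycle graph $C_n$''. Here I would argue: pick any cycle $c$ through a vertex $v$; for any other vertex $w$, strong connectedness gives a path $w\to v$ and a path $v\to w$. Any edge leaving a vertex of $c$ other than the one continuing along $c$ would be an exit for $c$, contradiction; so the only outgoing edge from each vertex of $c$ is the cycle edge, and by an analogous entry argument the only incoming edge is the cycle edge. If some $w\notin c^0$ existed, the path $v\to w$ must leave $c^0$ at some vertex of $c$, which is precisely such a forbidden extra outgoing edge --- contradiction. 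Hence $E_i^0=c^0$ and $E_i^1=c^{(1)}$, i.e.\ $E_i=C_n$. I would also remark that the three classes are mutually exclusive (one vertex vs.\ exactly one cycle vs.\ a cycle with an exit are disjoint geometric conditions), so the decomposition is canonical.

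I expect the main obstacle to be purely expository: making precise the informal phrase ``$E_i$ has just one cycle'' used in the paragraph preceding the theorem, since a strongly connected graph that is not itself $C_n$ always contains infinitely many distinct closed paths. The correct dividing line is the presence or absence of a cycle \emph{with an exit}, and I would phrase the proof in those terms throughout, invoking Condition $(L)$ explicitly rather than counting cycles. Everything else --- the component decomposition, the transfer of semiprimeness to summands, and the appeals to Propositions \ref{prop_caracterizacion_semiprima}, \ref{prop_caracterizacion_prima} and Theorem \ref{teorema_primitiva} --- is routine.
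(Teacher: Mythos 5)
Your proof is correct and follows essentially the same route as the paper, whose own argument is the informal paragraph preceding the theorem statement: decompose $E$ into connected components, use Propositions \ref{prop_caracterizacion_semiprima} and \ref{prop_caracterizacion_prima} to conclude each component is strongly connected and each summand prime, and then trichotomise into a single vertex, a cycle $C_{n}$, or a component satisfying Condition $(L)$, the last case handled by Theorem \ref{teorema_primitiva}. You in fact supply a detail the paper glosses over with the phrase ``has just one cycle,'' namely the argument that a strongly connected graph containing an exit-free cycle must be exactly that cycle (which also justifies passing from ``some cycle has an exit'' to Condition $(L)$).
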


\section{Noetherian path algebras}\label{sec_noetherian}

Developing a condition for a path algebra to be Artinian has no been difficult. We have been able to work with the descending chain condition successfully. In the following, we will work with the ascending chain condition, that is, we will develop a characterisation of Noetherian path algebras. This time it will be not as easy, but the results are promising. Left Noetherianity of path algebras is mentioned in \cite{quivlecs} to be equivalent to the graph being finite and every cycle having no entries. However, the proof of this result is not given. In order to prove so we first need to see that the algebra $KC_n$ is left (resp. right) Noetherian. This is immediate following the Peirce decomposition that we described in Example \ref{ejemplo_ciclo} and Proposition \cite[1.7]{goodear}. Then, we have the following Lemma.

\begin{lemma}\label{lemma_ciclo_noetheriano}
The path algebra $KC_n$  of a cycle with $n$ vertices over a field $K$ is left (right) Noetherian. 
\end{lemma}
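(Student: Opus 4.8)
The plan is to use the explicit description of $KC_n$ obtained in Subsection \ref{ejemplo_ciclo}, namely the isomorphism $\tau\colon KC_n \xrightarrow{\ \sim\ } \Imagen(\tau) \subseteq \MM_n(K[x])$, and to combine it with the Peirce decomposition and a standard transfer result for noetherianity through a finite system of idempotents. Concretely, writing $A = KC_n$, the identity is $1 = \sum_{i=0}^{n-1} v_i$, and the $v_i$ form a complete orthogonal system of idempotents, so we have the Peirce decomposition $A = \bigoplus_{i,j} A_{i,j}$ with $A_{i,j} = v_i A v_j = K[c_i]\mu_{i,j}$. I would first observe that each diagonal block $A_{i,i} = K[c_i] \cong K[x]$ is a commutative noetherian ring by the Hilbert basis theorem, and that each $A_{i,j}$ is finitely generated as a left $A_{i,i}$-module (indeed generated by the single element $\mu_{i,j}$, since $A_{i,j} = K[c_i]\mu_{i,j}$) and as a right $A_{j,j}$-module (generated by $\mu_{i,j}$ as well, using $p(c_i)\mu_{i,j} = \mu_{i,j}p(c_j)$).

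The key step is then to invoke Proposition [1.7] of \cite{goodear} (the reference cited in the excerpt): if a ring $A$ has a complete orthogonal set of idempotents $e_1,\dots,e_n$ such that each $e_i A e_i$ is left noetherian and each $e_i A e_j$ is a finitely generated left $e_i A e_i$-module, then $A$ is left noetherian. Applying this with $e_i = v_i$ and the observations above gives that $A = KC_n$ is left noetherian; the right-noetherian statement follows by the symmetric argument (or by passing to the opposite algebra, noting $KC_n^{\mathrm{op}} \cong KC_n^{\mathrm{op}}$ is again a cycle path algebra, or directly since each $A_{i,j}$ is finitely generated as a right $A_{j,j}$-module). Alternatively, one could argue through the matrix model: $\Imagen(\tau)$ sits between $\MM_n(xK[x])$ and $\MM_n(K[x])$, both of which are noetherian (being finite matrix rings over the noetherian ring $K[x]$), and $\Imagen(\tau)$ is a finitely generated module over the noetherian subring $\MM_n(xK[x]) + \sum_i x^0 K E_{i,i}$-type order — but this route is messier, so I would present the Peirce/idempotent argument as the main line.

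I do not expect a genuine obstacle here: the statement is, as the text says, "immediate" once one has the Peirce decomposition and the cited transfer proposition. The only point requiring a line of care is verifying the finite generation of the off-diagonal Peirce components as one-sided modules over the diagonal ones, which is transparent from the formula $A_{i,j} = K[c_i]\mu_{i,j}$ together with the commutation relation $p(c_i)\mu_{i,j} = \mu_{i,j}p(c_{j})$ established in Subsection \ref{ejemplo_ciclo}. A secondary bookkeeping point is to make sure the hypotheses of \cite[1.7]{goodear} are stated for a \emph{unital} ring with a \emph{finite} complete orthogonal system — both hold for $KC_n$ since $n < \infty$ — so there is no need to worry about the nonunital subtleties that appear elsewhere in the paper.
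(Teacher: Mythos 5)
Your argument is correct and follows exactly the route the paper takes: the paper's justification is precisely that the claim ``is immediate following the Peirce decomposition'' of Subsection \ref{ejemplo_ciclo} together with \cite[Proposition 1.7]{goodear}, and you have simply spelled out the verification that each corner $v_iAv_i\cong K[x]$ is noetherian and each $v_iAv_j=K[c_i]\mu_{i,j}$ is a cyclic module over the corresponding corner. No gap; your write-up just makes explicit what the paper leaves implicit.
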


Lemma \ref{lemma_ciclo_noetheriano} establishes that a cycle with $n$ vertices, as in Example \ref{ejemplo_ciclo}, is left and right Noetherian. The next question is: what happens when we add exits or entries to the cycle? The intuition tells us that it will preserve the Noetherianity for one side and not for the other.

\begin{proposition} \label{prop_noetheriano_unciclo}
Consider a directed graph $E$ with only one cycle which has no entries (resp. exits) and $|E^0 \cup E^1|< \infty$. Then, the path algebra $A = KE$, with $K$ a field, is left (resp. right) Noetherian.
\end{proposition}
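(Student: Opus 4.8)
The plan is to realise $A=KE$ as a formal triangular matrix ring by means of a suitable Peirce decomposition, and then to reduce the problem to the noetherianity of $KC_n$ already obtained in Lemma \ref{lemma_ciclo_noetheriano}, together with the finite dimensionality of the ``acyclic part''. I spell out the left noetherian case under the hypothesis that the unique cycle $C$ of $E$ has no entries; the right case, under the dual hypothesis that $C$ has no exits, is obtained by the symmetric argument (interchanging left and right modules and the roles of entries and exits), so I only indicate it at the end.

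First I would set $V_1=C^0$, $V_2=E^0\setminus V_1$ and, since $E^0$ is finite, form the idempotents $e_1=\sum_{v\in V_1}v$ and $e_2=\sum_{v\in V_2}v$, so that $1=e_1+e_2$ and $A=e_1Ae_1\oplus e_1Ae_2\oplus e_2Ae_1\oplus e_2Ae_2$. The no-entries hypothesis enters only through the following elementary observation: if a path of $E$ ever moves from a vertex outside $C$ to a vertex of $C$, the edge it traverses has its range on $C$ and its source off $C$, hence is an entry of $C$, which is impossible. From this I get three facts: (a) there is no path from a vertex of $V_2$ to a vertex of $V_1$, so $e_2Ae_1=0$ and $A$ is \emph{upper triangular} with respect to $e_1,e_2$; (b) any path between two vertices of $C$ stays on $C$ (it could only leave and return through an entry), so $e_1Ae_1$ is spanned by the paths of $C$ and hence $e_1Ae_1\cong KC_n$ with $n=|C^0|$; (c) any path between two vertices of $V_2$ stays in $V_2$, so $e_2Ae_2\cong KF$, where $F$ is the subgraph of $E$ induced on $V_2$ (the collapse of $E$ through $V_1$, cf.\ Proposition \ref{prop_colapso}). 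Since the only cycle of $E$ lies in $V_1$, the graph $F$ is finite and acyclic, so $KF$ is finite dimensional by Proposition \ref{prop_caracterizacion_artiniano}, hence left noetherian; and $KC_n$ is left noetherian by Lemma \ref{lemma_ciclo_noetheriano}.

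It remains to control the off-diagonal bimodule $M:=e_1Ae_2$, which is the only step requiring a genuine argument. Applying the observation above once more, a path from a vertex of $C$ to a vertex of $V_2$ can leave $V_1$ only once and never returns; hence it factors as $\alpha\,g\,\beta$, where $\alpha$ is a path inside $C$ (so $\alpha\in e_1Ae_1\cong KC_n$), $g$ is one of the finitely many edges with source on $C$ and range in $V_2$, and $\beta$ is a path inside $F$, of which there are only finitely many since $KF$ is finite dimensional (here $\beta$ may be a trivial path). Consequently $M$ is generated, as a left $KC_n$-module, by the finite set $\{\,g\beta\,\}$, so $M$ is a finitely generated, hence noetherian, left $KC_n$-module.

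Finally I would assemble the pieces. Invoking the standard criterion for a formal upper triangular matrix ring $\begin{pmatrix} R & M\\ 0 & S\end{pmatrix}$ to be left noetherian, namely that $R$ and $S$ be left noetherian and $M$ be a noetherian left $R$-module (a standard property of triangular matrix rings; see \cite[1.7]{goodear} and the discussion preceding Lemma \ref{lemma_ciclo_noetheriano}), applied with $R=e_1Ae_1\cong KC_n$, $S=e_2Ae_2\cong KF$ and $M=e_1Ae_2$, we conclude that $A=KE$ is left noetherian. Equivalently, $B:=e_1Ae_1\oplus e_2Ae_2$ is a left noetherian subring of $A$ with the same identity $e_1+e_2$, and $A=B\oplus M$ is finitely generated as a left $B$-module, so the ascending chain condition for left ideals of $A$ follows from that for left $B$-submodules. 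For the right noetherian case one runs the dual argument: now the no-exits hypothesis gives $e_1Ae_2=0$, so $A$ is lower triangular, and $e_2Ae_1$ is finitely generated as a \emph{right} $KC_n$-module because each path from $V_2$ to $C$ enters $V_1$ exactly once and then stays on $C$. The main obstacle is essentially the bookkeeping in (b), (c) and in the factorisation of $M$; once the no-entries condition is converted into the principle ``one can only enter a cycle vertex along a cycle edge'', everything else is formal.
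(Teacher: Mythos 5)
Your proof is correct and follows essentially the same route as the paper's: the Peirce decomposition with respect to $e_1=\sum_{v\in C^0}v$ and $e_2=1-e_1$, the vanishing of $e_2Ae_1$ from the no-entries hypothesis, the identifications $e_1Ae_1\cong KC_n$ and $e_2Ae_2\cong KF$ with $F$ finite acyclic, the finite generation of $e_1Ae_2$ as a left $KC_n$-module via the factorisation of paths leaving the cycle, and the triangular matrix ring criterion from \cite{goodear}. Your write-up is in fact a little more explicit than the paper's about why paths between cycle vertices stay on the cycle and why the off-diagonal paths factor as $\alpha g\beta$.
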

\begin{proof}
We will prove this result for the left Noetherian case. The other one is completely analogous (or dual). The arguments of the proof will rely on several results of \cite{goodear} and the Peirce decomposition of $KE$.

Consider $c$ the cycle without entries and $F^0 = E^0\setminus c^0$. Without any loss of generality we can order the vertices such that $u_i \in c^0$ for $i=1,\ldots,n$ and $u_i \in F^0$ for $i=n+1,\ldots,n+m$. Let us take the complete ortonormal system of idempotent elements $e_1 = \sum_{i=1}^nu_i $ and $e_2 = \sum_{i=n+1}^{n+m}u_i$, that is, $e_1e_2 = e_2e_1=0$, $e_j^2=e_j$ for $j =1,2$ and $e_1+e_2 =1$. As we know $c$ has no entries, thus, there is no path from a vertex in $F^0$ to a vertex in $c^0$ and $A_{21}:= e_2Ae_1 = 0$. In consequence, we have that
\begin{equation*}
    A \cong \left ( \begin{array}{cc}
        S & B  \\
       0  & T
    \end{array} \right )
\end{equation*}
with $S := e_1Ae_1$, $T:= e_2Ae_2$ and $B := e_1Ae_2$.
Following \cite[Proposition 1.8]{goodear}, if we prove that $S$ and $T$ are left Noetherian and $B$ is a finitely generated left $S$-module, we will prove that $A$ is left Noetherian. As we know from the Peirce decomposition, $S$ and $T$ are subalgebras of $A$ and $B$ is a $(S,T)$-bimodule. The algebra $S$ is isomorphic to the path algebra of a cycle and following Lemma \ref{lemma_ciclo_noetheriano}, $S$ is Noetherian. If we consider the algebra $T$, we have that it is finite dimensional because there is a finite number of paths connecting vertices in  $F^0$ (there are no cycles and the number of vertices and edges is finite). As a consequence, $T$ is left Noetherian.

For the last part we need to prove that $B$ is finite generated as a left $S$-module. If we consider $G$ the subgraph of $E$ obtained by removing the cycle except the vertices that have exits. It is not difficult to check that the graph $G$, under our hypothesis, does not contain any cycle and $|G^0\cup G^1|< \infty$. This means that the number of paths in $G$ is finite. Let us now prove that $B = \sum_{\mu \in \PP(G)}S \mu$. As we know, $B$ is the vector space generated by all the paths that start at some vertex in $c^0$ and end in a vertex in $F^0$. From that, it is easy to verify that $B' = \sum_{\mu \in \PP(G)}S \mu  \subseteq B$. On the other hand, if we consider a path $\lambda \in B$ we have that $\lambda = \beta \mu$ with $\beta$ a path in $Kc$ and $\mu$ a path in $KG$. Finally, for all the paths in $B$ we have that $\lambda = \beta \mu \subseteq S \mu \subseteq B'$ and $B \subseteq B'$. This proves that $B$ is finitely generated as a $S$-module and concludes the proof. 
\end{proof}

Now that we know some examples of Noetherian path algebras, and we are aware of how the quotient of path algebras over two-sided ideals, generated by vertices, behave. We are in conditions to prove the next characterisation.

\begin{proposition}\label{prop_noetheriano} 
Let $E$ be a directed graph and $K$ a field. The path algebra $KE$ is left (resp. right) Noetherian  if and only if $|E^0\cup E^1| <  \infty $ and every cycle has no entries (resp. exits). 
\end{proposition}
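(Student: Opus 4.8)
The plan is to prove both implications by reducing to the pieces already established, namely Proposition \ref{prop_caracterizacion_artiniano}, Proposition \ref{prop_noetheriano_unciclo}, and the behaviour of collapses from Proposition \ref{prop_colapso}. I will give the argument for the left noetherian case; the right case is entirely dual (exchange sources/sinks, entries/exits, left/right modules throughout).

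For the ``if'' direction, assume $|E^0\cup E^1|<\infty$ and every cycle has no entries. Since the vertex set is finite, $A=KE$ is unital with $1=\sum_{v\in E^0}v$. The key structural observation is that, because no cycle has an entry, the cycles of $E$ are pairwise disjoint and ``terminal'' with respect to being entered: if $c_1,\dots,c_r$ are the distinct cycles, then no edge of $E$ ends at a vertex of any $c_k$ except the edges of $c_k$ itself. Order $E^0$ so that the cycle vertices come first, grouped by cycle, and the acyclic vertices afterwards. Setting $e_k=\sum_{v\in c_k^0}v$ for $k=1,\dots,r$ and $e_0=\sum_{v\ \text{acyclic}}v$, we get a complete orthogonal system of idempotents, and the no-entry condition forces $e_iAe_k=0$ whenever $k\ge 1$ and $i\ne k$ (any path landing in $c_k^0$ lies entirely inside $c_k$). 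Hence $A$ is, after reordering, block upper triangular with diagonal blocks $e_kAe_k$. For $k\ge 1$, $e_kAe_k\cong KC_{n_k}$ is left noetherian by Lemma \ref{lemma_ciclo_noetheriano}; the block $e_0Ae_0$ is the path algebra of a finite acyclic graph, hence finite dimensional and therefore left noetherian by Proposition \ref{prop_caracterizacion_artiniano}. Finally, each off-diagonal block $e_iAe_j$ is the $K$-span of the paths from the $i$-th group to the $j$-th group; I will show it is finitely generated as a left $e_iAe_i$-module by the same factorisation trick as in Proposition \ref{prop_noetheriano_unciclo}: any such path factors as (a power of a loop inside the source cycle, if $i\ge 1$) times a path through the finite acyclic part, and the latter paths are finite in number. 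An iterated application of \cite[Proposition 1.8]{goodear} up the triangular structure then yields that $A$ is left noetherian.

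For the ``only if'' direction, assume $A=KE$ is left noetherian. First, $|E^0\cup E^1|<\infty$: if $|E^0|=\infty$ pick distinct vertices $v_1,v_2,\dots$ and consider the strictly ascending chain of left ideals $Av_1\subsetneq Av_1+Av_2\subsetneq\cdots$, which does not stabilise since $v_{n+1}\notin\sum_{i\le n}Av_i$ (a linear combination of paths ending at $v_1,\dots,v_n$ cannot equal $v_{n+1}$); similarly, if $|E^1|=\infty$ while $|E^0|<\infty$ then some vertex emits infinitely many edges $f_1,f_2,\dots$ and the chain $Af_1\subsetneq Af_1+Af_2\subsetneq\cdots$ is strictly ascending. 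Second, no cycle has an entry: suppose $c$ is a cycle based at $v$ and $e$ is an entry, so $r(e)=r(f_j)$ for some edge $f_j$ of $c$ with $e\ne f_j$; then, writing $c=f_1\cdots f_n$ and letting $\mu=f_{j+1}\cdots f_n f_1\cdots f_j$ be the rotation of $c$ so that it is a cycle based at $w:=r(f_j)$, the elements $e\mu^k$ ($k\ge 0$) are linearly independent paths and $Ae\mu\subsetneq Ae\mu^2\subsetneq\cdots$ — I need to check this chain is strictly ascending, i.e. $e\mu^{k}\notin Ae\mu^{k+1}$, which holds because every nonzero path in $Ae\mu^{k+1}$ has length at least $\len(e)+(k+1)\len(\mu)>\len(e\mu^k)$. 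This contradicts the ascending chain condition, so no cycle can have an entry.

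The main obstacle is the ``if'' direction: one must be careful that the no-entry hypothesis really gives the clean block-triangular decomposition with all the ``backward'' blocks $e_iAe_k$ (for $k\ge 1$) vanishing, and that the off-diagonal blocks are genuinely finitely generated as one-sided modules over the diagonal cycle algebras rather than merely over the ambient algebra — this is where the factorisation $\lambda=\beta\mu$ (loop power times acyclic path) of Proposition \ref{prop_noetheriano_unciclo} must be pushed through in the presence of several cycles. Once the vanishing of the backward blocks and the finite generation of the forward blocks are in hand, the rest is a routine induction on the number of diagonal blocks using \cite[Proposition 1.8]{goodear}, exactly as in Proposition \ref{prop_noetheriano_unciclo}.
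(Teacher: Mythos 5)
Your ``only if'' direction contains a genuine error in the step that rules out entries. The left ideals $Ae\mu^{k}$ do not form a chain under inclusion: neither $Ae\mu^{k}\subseteq Ae\mu^{k+1}$ nor $Ae\mu^{k+1}\subseteq Ae\mu^{k}$ holds. Since the paths of $E$ form a basis of $KE$, every nonzero element of $Ae\mu^{k}$ is a linear combination of paths whose terminal segment of length $1+k\len(\mu)$ is exactly the word $e\mu^{k}$. The path $e\mu^{k+1}$ fails this test: the edge sitting $1+k\len(\mu)$ places from its end is the last edge $f_j$ of $\mu$, and $f_j\neq e$ precisely because $e$ is an entry; hence $e\mu^{k+1}\notin Ae\mu^{k}$. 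Combined with your own length count (which correctly gives $e\mu^{k}\notin Ae\mu^{k+1}$), this shows the ideals $Ae\mu^{k}$ are pairwise incomparable, so the displayed ``chain'' $Ae\mu\subsetneq Ae\mu^{2}\subsetneq\cdots$ is not a chain at all; moreover the condition you propose to verify, $e\mu^{k}\notin Ae\mu^{k+1}$, is evidence \emph{against} the inclusion $Ae\mu^{k}\subseteq Ae\mu^{k+1}$ you would need, not a proof of its strictness. The repair is to pass to the genuinely ascending chain of partial sums $Ae\mu\subset Ae\mu+Ae\mu^{2}\subset\cdots\subset\sum_{i=1}^{n}Ae\mu^{i}\subset\cdots$ (this is what the paper does) and to prove strictness by showing $e\mu^{n+1}\notin\sum_{i=1}^{n}Ae\mu^{i}$. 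Note that here a pure length comparison no longer suffices, because $e\mu^{n+1}$ is \emph{longer} than each $e\mu^{i}$; you must use the terminal-segment argument above, i.e.\ that the edge of $e\mu^{n+1}$ at position $1+i\len(\mu)$ from its end is $f_j\neq e$.

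The rest of your argument is sound. The finiteness part of the ``only if'' direction agrees with the paper's, and your ``if'' direction is correct but follows a different route: you build one global block-triangular Peirce decomposition, with one idempotent per cycle and one for the acyclic part, observe that the no-entry hypothesis annihilates every block $e_iAe_k$ with $k\geq 1$ and $i\neq k$ (any path ending in a cycle lies entirely inside that cycle, which also gives the pairwise disjointness of the cycles), and then iterate \cite[Proposition 1.8]{goodear}. The paper instead inducts on the number of cycles, collapsing one cycle at a time via Proposition \ref{prop_colapso}, applying \cite[Proposition 1.2]{goodear} to the ideal $(c^0)$ and the quotient, and identifying the left $KE$-submodules of $(c^0)$ with modules over the path algebra of a one-cycle subgraph so as to invoke Proposition \ref{prop_noetheriano_unciclo}. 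Your version sidesteps that identification and is arguably more transparent, at the price of verifying directly that each block $e_kAe_0$ is finitely generated over $KC_{n_k}\cong e_kAe_k$ --- which your factorisation of a path as a loop power inside the $k$-th cycle times one of the finitely many paths through the acyclic part does, since the absence of entries prevents any path from re-entering a cycle.
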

\begin{proof}
We will prove the left Noetherian case. The right one is completely analogous. In fact, it is the dual result.

First, suppose that $A = KE$ is left Noetherian. Consider that $E^0\cup E^1$ is not finite. This means that $E^0$ or $E^1$ are infinte sets. If $E^0$ is not finite there is a set of vertices $\{u_i\}_{i \geq 1}$ and then we can construct the chain of left ideals
\begin{equation*}
    Au_1 \subset Au_1+Au_2 \subset \cdots \subset  \sum_{i=1}^nAu_i \subset \cdots 
\end{equation*}
It is easy to verify that $u_{n+1} \notin \sum_{i=1}^n Au_i$. Then, we have constructed a strictly ascending chain of left ideals. This can not be possible because $KE$ is Noetherian. As a consequence, $|E^0|<\infty$.

In the same way we can prove that $|E^1|<\infty$. This leads us to $|E^0\cup E^1|<\infty$.

Let us suppose that there is a cycle $c \in \PP(E)$ with an entry $f \in E^1$, that is, $0 \neq fc \in \PP(E)$. In the same way as we did before, we construct the chain of left ideals
\begin{equation*}
    Afc \subset Afc + Afc^2 \subset \cdots \subset \sum_{i=1}^n Afc^i \subset \cdots 
\end{equation*}
This is a strictly ascending chain of left ideals and that enters in contradiction with the fact that $KE$ is left Noetherian. As a consequence, every cycle has no entries. 

Let us prove the reciprocal implication. Consider $E$ a graph with $|E^0\cup E^1| < \infty$ and every cycle has no entries. We will prove that $KE$ is left Noetherian by induction on the number of cycles.

If $E$ has no cycles, we have that $KE$ is a finite dimensional $K$-algebra and this meas that $KE$ is left Noetherian. 

The case $n = 1$ is just a consequence of Proposition \ref{prop_noetheriano_unciclo}.

Suppose as our induction hypothesis that if the graph has $n$ cycles then $KE$ is left Noetherian. Consider a finite graph $E$ with $n+1$ cycles with no entries. Let us take $c$ any cycle and consider $I = (c^0)$ the two-sided ideal generated by the vertices of $c^0$. Following Proposition \cite[1.2]{goodear}, if we prove that $I$ and $KE/I$ are $KE$-Noetherian, then $KE$ will be Noetherian.

As we know from Proposition \ref{prop_colapso} it is true that $KE/I \cong KF$ where $F = E \setminus c^0$ (the collapse of $E$ through the set of vertices $c^0$). Because all the cycles in $E$ have no entries, the graph $F$ has $n$ cycles with no entries, and by our induction hypothesis, this implies that $KF$ is $KF$-Noetherian. In addition, all the $KE$-left modules in $KE/I$ are $KE/I$-left modules in $KE/I$ and viceversa. This is a consequence of the surjectivity of the canonical projection. Then, we have that $KE/I$ is $KE$-Noetherian. 

On the other hand, if we consider the subgraph $G = (G^0,G^1,s,r)$ with 
\begin{equation*}
    G^0 = \{u \in E^0 \colon \exists \ \mu \in I \cap \PP(E) \text{ with } u \in \mu^{(0)}\}
\end{equation*}
\begin{equation*}
     G^1 = \{f \in E^1 \colon \exists \ \mu \in I \cap \PP(E) \text{ with } f \in \mu^{(1)}\}.
\end{equation*}
we can think of this subgraph as the graph formed by selecting all the vertices and edges in the paths that generate $I$. It is important to notice some things.
\begin{enumerate}
    \item The graph $G$ has only one cycle. This fact is because every cycle in $E$ has no entries. As a consequence, there is no path $\mu \in I$ of the form $\mu = c\lambda c'$, with $c'$ another cycle. In other words, there is no path connecting to cycles. 
    \item The set $I$ is contained in $KG$. As we know, the ideal $I$ is generated by all the paths that goes through some vertex $u \in c^0$ and all of them are in $\PP(G)$ by construction. Then $I\subset KG$. Finally, because $G$ is a subgraph of $E$, $KG$ is a subalgebra of $KE$ implying that $I$ is an ideal of $KG$.
    \item If $ \nu \in \PP(E)\setminus\PP(G)$ then $\nu \cdot(X) = 0$. This implies that all the left $KG$-submodules contained in $I$ are left $KE$-modules. Let us take $J$ a left $KG$-module contained in $I$. Consider $a \in KE$, we can write $a = \sum_{i=1}^n a_i \mu_i + \sum_{j = 1}^m b_j \lambda_i $ with $a_i,b_j \in K\setminus \{0\}$, $\mu_i \in \PP(G)$ with $\mu_i \neq \mu_l$ for $i \neq l$ and $\lambda_j \in \PP(E)\setminus \PP(G)$ with $\lambda_j \neq \lambda_k$ for $j\neq k$. Then $aJ =(\sum_{i=1}^n a_i \mu_i + \sum_{j = 1}^m b_j \lambda_i )J = (\sum_{i=1}^n a_i \mu_i)J \subset J  $ and $J$ is a left $KE$-module. In addition, all the left $KE$-modules contained in $I$ are left $KG$-modules because $KG$ is a subalgebra of $KE$.
\end{enumerate}
Finally, thanks to the case $n=1$ we have that $KG$ is left Noetherian and this implies that $I$ is left Noetherian as a $KG$-module. But, as we know, all the left $KE$-submodules in $I$ are left $KG$-submodules in $I$ and viceversa which means that $I$ is left Noetherian.
\end{proof}

\begin{proposition}\label{prop_noeth_formal_triangular}
Let us consider a directed graph $E$ and $A = KE$ the corresponding path algebra over the field $K$. If the algebra $A$ is left or right Noetherian, then $A$ is isomorphic to a triangular matrix algebra. That is, $A$ is isomorphic to 
\begin{equation*}
    \left ( \begin{array}{cc}
        S & B\\
        0 & T
    \end{array}\right ) 
\end{equation*}
with $S$ and $T$ two $K$-algebras and $B$ a $(S,T)$-bimodule.
\end{proposition}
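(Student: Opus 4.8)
The plan is to obtain the triangular form from a Peirce decomposition of $A$ with respect to a well-chosen idempotent built out of the vertex idempotents. First I would invoke Proposition \ref{prop_noetheriano}: since $A=KE$ is left (resp.\ right) noetherian, the graph $E$ is finite — hence $A$ is unital with $1=\sum_{v\in E^0}v$, and $E^0$ is a complete orthogonal system of idempotents — and moreover every cycle of $E$ has no entries (resp.\ no exits). I will treat the left case; the right case is entirely dual.

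The key step is a purely geometric claim: \emph{if every cycle of $E$ has no entries, then there is no path in $E$ from a vertex lying on no cycle to a vertex lying on some cycle.} To see this I would take such a path $\mu=h_1\cdots h_m$, visiting vertices $z_0,z_1,\dots,z_m$ with $z_0$ on no cycle and $z_m$ on a cycle, and pick the least index $l\ge 1$ with $z_l$ lying on some cycle $c'$. Then $z_{l-1}$ lies on no cycle, so for the edge $e=h_l$ (which goes from $z_{l-1}$ to $z_l$) we have $s(e)=z_{l-1}\notin (c')^0$, while $z_l=r(e)$ is the range of one of the edges of $c'$, because every vertex of a cycle is the range of a cycle edge; hence $e$ is an entry of $c'$, contradicting the hypothesis. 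This is essentially the same mechanism already exploited inside the proof of Proposition \ref{prop_noetheriano} (``there is no path connecting two cycles'').

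With this in hand, let $X\subseteq E^0$ be the set of vertices lying on no cycle (these are exactly the vertices of the skeleton $\Sigma(E)$ of Definition \ref{def_innertree}) and $Y=E^0\setminus X$ the set of vertices lying on some cycle, and put $e=\sum_{v\in Y}v$, an idempotent with $1-e=\sum_{v\in X}v$. Since $vAw$ is the span of the paths from $v$ to $w$, the claim above gives $(1-e)Ae=\bigoplus_{v\in X,\,w\in Y}vAw=0$. The Peirce decomposition relative to the orthogonal system $\{e,1-e\}$, recalled in Section \ref{sec_preliminares}, is compatible with matrix multiplication, so
\[
A\;\cong\;\begin{pmatrix} eAe & eA(1-e)\\ (1-e)Ae & (1-e)A(1-e)\end{pmatrix}\;=\;\begin{pmatrix} eAe & eA(1-e)\\ 0 & (1-e)A(1-e)\end{pmatrix},
\]
which is of the required form with $S=eAe$, $T=(1-e)A(1-e)$ two $K$-algebras and $B=eA(1-e)$ the associated $(S,T)$-bimodule. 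When $X=\emptyset$ or $Y=\emptyset$ one takes $e=1$ or $e=0$ and the off-diagonal (or one diagonal) block vanishes, so $A$ is still, trivially, a formal triangular matrix algebra. In the right-noetherian case ``no entries'' is replaced by ``no exits'', the dual of the claim gives that there is no path from a cycle vertex to a non-cycle vertex, i.e.\ $eA(1-e)=0$, and relabelling the two idempotents yields the upper triangular form with $S=(1-e)A(1-e)$, $B=(1-e)Ae$, $T=eAe$.

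I do not expect a genuine obstacle here: the content is the geometric claim in the second paragraph, and everything else is the standard Peirce/matrix bookkeeping already set up in the preliminaries. The only points needing a word of care are the degenerate cases (no cycles at all, or every vertex on a cycle) and the explicit remark that the Peirce blocks of $A$ really multiply like matrix entries, which is exactly what is recorded just before the cycle example in Section \ref{sec_preliminares}.
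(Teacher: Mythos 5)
Your proof is correct and follows essentially the same route as the paper: invoke Proposition \ref{prop_noetheriano}, split $E^0$ into cycle and non-cycle vertices, form the corresponding pair of orthogonal idempotents, and read off the triangular shape from the Peirce decomposition. The only difference is that you spell out the verification that the lower-left Peirce block vanishes (the ``no path from a non-cycle vertex into a cycle'' claim), which the paper leaves as ``not difficult to check''.
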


\begin{proof}
As we know from Proposition \ref{prop_noetheriano} being left (resp. right) Noetherian means that $|E^0\cup E^1|< \infty $ and all the cycles in the graph have no entries (resp. exits). Considering these facts, if we define $S^0 = \{v \in E^0 \colon v \text{ is in a cycle} \}$ (resp. $S^0 = \{v \in E^0 \colon v \text{ is not in a cycle}\}$) and $T^0 = E^0 \setminus S^0$. These two sets will allow us to construct a complete system of ortogonal idempotents $ e_1 = \sum_{v \in S^0} v , \ \ e_2 = \sum_{v \in T^0} v$.
For the last part we just need to consider the Peirce decomposition with $S = e_1Ae_1$, $B= e_1Ae_2$ and $T = e_2Ae_2$. It is not difficult to check that under our hypothesis $e_2Ae_1 = 0$.
\end{proof}
\begin{observation}
Under the previous hypothesis $T$ is isomorphic to $K\Sigma(E)$ if $KE$ is left Noetherian and $S$ is isomorphic to $K\Sigma(E)$ if $KE$ is right Noetherian. 
\end{observation}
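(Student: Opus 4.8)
The plan is to avoid manipulating paths directly and instead combine the triangular description of Proposition \ref{prop_noeth_formal_triangular} with the collapse isomorphism of Proposition \ref{prop_colapso}. Suppose first that $A = KE$ is left noetherian, and keep the notation of the proof of Proposition \ref{prop_noeth_formal_triangular}: the graph $E$ is finite (Proposition \ref{prop_noetheriano}), $S^0$ is the set of vertices lying on some cycle, $T^0 = E^0\setminus S^0$, $e_1 = \sum_{v\in S^0}v$, $e_2 = \sum_{v\in T^0}v$, $e_2Ae_1 = 0$, and
\begin{equation*}
A = e_1Ae_1\oplus e_1Ae_2\oplus e_2Ae_2 = S\oplus B\oplus T .
\end{equation*}
The point that makes everything work is that $S^0$ is precisely the set $X$ of Definition \ref{def_innertree}, so $\Sigma(E)$ is the collapse of $E$ through $X = S^0$ and Proposition \ref{prop_colapso} gives $K\Sigma(E)\cong A/(X)$, where $(X)$ denotes the two-sided ideal generated by the vertices of $X$.

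The step that does the work is to locate $(X)$ inside this Peirce decomposition. Since the vertices form an orthogonal family of idempotents and $1 = e_1+e_2$, each $v\in X$ satisfies $v = ve_1v$, so $(X) = \sum_{v\in X}AvA = Ae_1A$. This ideal contains $e_1$, hence it contains $e_1A = S\oplus B$; and $e_2(Ae_1A)e_2 = (e_2Ae_1)(Ae_2) = 0$, so $(X)\cap T = 0$. Together with the displayed direct sum this forces $(X) = S\oplus B$, hence $A = (X)\oplus T$ and the composite $T\hookrightarrow A\twoheadrightarrow A/(X)$ is a $K$-algebra isomorphism (injective because $T\cap(X) = 0$, surjective because $T+(X) = A$). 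Therefore $T\cong A/(X)\cong K\Sigma(E)$, as claimed.

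For the right noetherian case the roles are swapped: now $S^0$ is the set of vertices not on a cycle, $T^0 = E^0\setminus S^0$ is the set $X$ of cycle vertices, $e_2 = \sum_{v\in X}v$, and the triangularity again reads $e_2Ae_1 = 0$ (no path leaves a cycle once cycles have no exits). Running the identical computation with $e_2$ in place of $e_1$ yields $(X) = Ae_2A = B\oplus T$, whence $S = e_1Ae_1\cong A/(X)\cong K\Sigma(E)$. The only slightly delicate point in either case is identifying the ideal generated by a sum of orthogonal vertex idempotents with the corresponding corner strip of the Peirce decomposition, and this is immediate from the triangularity $e_2Ae_1 = 0$ already secured in Proposition \ref{prop_noeth_formal_triangular}; the remainder is bookkeeping.
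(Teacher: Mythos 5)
Your proof is correct and complete. The paper states this observation without proof, so there is nothing to match it against; the implicit argument the authors presumably have in mind is the direct one: in the left noetherian case cycles have no entries, so a path between two vertices of $T^0$ can never touch a cycle vertex, hence $T=e_2Ae_2$ is spanned exactly by the paths of $\Sigma(E)$ and is literally a copy of $K\Sigma(E)$ inside $KE$. Your route is different and arguably cleaner: you reduce all the combinatorics to the single identity $e_2Ae_1=0$ already secured in Proposition \ref{prop_noeth_formal_triangular}, compute $(X)=Ae_1A=S\oplus B$ (resp.\ $(X)=Ae_2A=B\oplus T$) by Peirce bookkeeping, and then read off $T\cong A/(X)\cong K\Sigma(E)$ from Proposition \ref{prop_colapso}. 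A pleasant by-product of your argument is that the combinatorial fact above comes out for free: a path of $e_2Ae_2$ passing through a cycle vertex would be a nonzero element of $T\cap(X)=0$. The only hypotheses you rely on --- finiteness of $E$ (so that $A$ is unital and $(X)=Ae_1A$) and the vanishing of the lower corner --- are both supplied by Propositions \ref{prop_noetheriano} and \ref{prop_noeth_formal_triangular}, so the argument is sound as written.
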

\begin{observation}
The reader may not find very difficult to check that if $A$ is left (resp. right) Noetherian the algebra $S$ (resp. $T$) is of the form
\begin{equation*}
    \left (\begin{array}{cccc}
        S_1 & 0 & \cdots & 0  \\
         0& S_2 & \cdots & 0 \\
         \vdots & \vdots & \ddots & \vdots \\
         0 & 0 & \cdots & S_m
    \end{array} \right ) 
\end{equation*}
with $m$ the number of cycles and $S_i$ the algebra of a cycle for every $i = 1,\ldots, m$. 
\end{observation}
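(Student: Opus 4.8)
The plan is to treat the left noetherian case (the right one being dual) and to refine the idempotent $e_1=\sum_{v\in S^0}v$ of Proposition \ref{prop_noeth_formal_triangular}, where $S^0$ is the set of vertices lying on some cycle, into a complete family of orthogonal idempotents $g_1,\dots,g_m$ indexed by the cycles of $E$, in such a way that $g_iAg_j=0$ for $i\neq j$ and $g_iAg_i\cong KC_{n_i}$.

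First I would extract from Proposition \ref{prop_noetheriano} that $|E^0\cup E^1|<\infty$ and that no cycle has an entry. The crucial elementary remark is that a vertex $v$ lying on a cycle $c$ has exactly one incoming edge in all of $E$: if $f$ is the edge of $c$ entering $v$, any edge $e\neq f$ with $r(e)=v$ would be an entry of $c$, contradicting Proposition \ref{prop_noetheriano}. Moreover this unique incoming edge is an edge of $c$, so its source again lies on $c\subseteq S^0$. Consequently the rule sending $v\in S^0$ to the source of its unique incoming edge is a well-defined self-map of the finite set $S^0$; the backward uniqueness it encodes makes it injective, hence bijective, and its orbit decomposition is exactly a partition $S^0=\bigsqcup_{i=1}^m c_i^{0}$ into pairwise vertex-disjoint cycles, where $m$ is the number of cycles of $E$.

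Next I would analyse the paths contributing to $S$. Put $g_i=\sum_{v\in c_i^{0}}v$, so that $e_1=\sum_{i=1}^m g_i$ is a decomposition into orthogonal idempotents and, by the Peirce decomposition, $S=e_1Ae_1=\bigoplus_{i,j=1}^m g_iAg_j$, with $g_iAg_j$ spanned by the paths from a vertex of $c_i$ to a vertex of $c_j$. If $\lambda=h_1\cdots h_k$ is such a path ending at $w\in c_j^{0}$, then $h_k$ must be the unique incoming edge of $w$, that is, an edge of $c_j$; iterating this, every $h_t$ is an edge of $c_j$, so $s(\lambda)\in c_j^{0}$. For $i\neq j$ this contradicts the disjointness of the $c_\ell^{0}$ (and the length-zero case would put a vertex in $c_i^{0}\cap c_j^{0}$), hence $g_iAg_j=0$ whenever $i\neq j$. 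The same argument identifies $g_iAg_i$ with the span of the paths running along the cycle $c_i$, which is a subalgebra isomorphic to $KC_{n_i}$ through the description of Subsection \ref{ejemplo_ciclo}. Thus $S\cong\bigoplus_{i=1}^m KC_{n_i}$, realised inside matrices as the stated block-diagonal algebra; the right noetherian statement about $T$ follows verbatim after exchanging ``entry'' with ``exit'' and ``incoming'' with ``outgoing''.

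I do not anticipate a genuine obstacle: the only step requiring a little attention is the backward-propagation argument, which simultaneously shows that the cycles are vertex-disjoint and that no path joins two distinct cycles, and which is precisely where the hypothesis that every cycle has no entries is used.
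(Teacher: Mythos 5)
Your argument is correct, and since the paper leaves this Observation as an exercise there is no proof of record to compare it with; what you wrote is a legitimate way to fill the gap. The mechanism you isolate is exactly the right one: by Proposition \ref{prop_noetheriano} no cycle has an entry, so every vertex lying on a cycle has a unique incoming edge, that edge belongs to the cycle, and backward propagation along unique incoming edges forces (a) two cycles sharing a vertex to coincide, hence $S^0$ is partitioned by the vertex sets of the $m$ cycles, and (b) any path ending at a vertex of $c_j$ to consist entirely of edges of $c_j$, so $g_iAg_j=0$ for $i\neq j$ and $g_iAg_i$ is the span of the paths running along $c_i$, i.e.\ a copy of $KC_{n_i}$ by Subsection \ref{ejemplo_ciclo}. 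The right noetherian case is indeed the exact dual with exits and outgoing edges.

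The only step you should tighten is the sentence ``the backward uniqueness it encodes makes it injective'': injectivity of the map $\phi\colon v\mapsto s(f_v)$ (where $f_v$ is the unique edge into $v$) is a statement about \emph{forward} branching at $\phi(v)$, not about backward uniqueness, so it does not follow formally from what precedes it. It is nevertheless true and easy to repair: if $\phi(v)=\phi(w)=u$ with $v\neq w$, the two distinct edges $u\to v$ and $u\to w$ are the unique incoming edges of $v$ and $w$, hence edges of the cycles $c_v$ and $c_w$ through $v$ and $w$; these two cycles then share the vertex $u$ but have different edges leaving $u$, contradicting the fact (from your own backward-propagation argument) that cycles through a common vertex have the same edge set. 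Alternatively, a counting argument works: edges with range in $S^0$ are in bijection with $S^0$ via the range map (in-degree one), each has source in $S^0$, and each vertex of $S^0$ has at least one outgoing edge into $S^0$ because it lies on a cycle, so each has exactly one, and $\phi$ is invertible. With either patch the proof is complete.
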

Once we have a characterisation of the Noetherian path algebras, we would like to obtain an structure theorem for them. If we have isomorphic path algebras, then this result would give us a one-to-one relation between the geometric properties of the corresponding graphs. The \emph{Jacobson radical} of an associative algebra, together with the central clousure of the path algebra of a cycle, will be key to achieve our aim. The central clousure is discussed in Section \ref{sec_centroide} and the characterisation of the Jacobson radical of a path algebra is mentioned (without a proof) in \cite{quivlecs} for finite acyclic graphs. However, we will prove the same result for arbitrary graphs.

Consider a ring $R$ not necessarily unital  and define the product  $a\circ b=a+b-ab$. An element $z \in R$  es said to be  quasi-regular if there exists $z'\in R$ such that $z\circ z'=z'\circ z=0$ (see \cite[p. 8]{jacobson1956structure}).  Then, in Proposition \cite[1, p. 9]{jacobson1956structure} it is proved that 
the radical is the set of all $z\in R$ such that $azb$ is quasi-regular for any $a,b\in R$.

\begin{proposition}\label{prop_radical_path}
    Consider a directed graph $E$ and the path algebra $KE$ over the field $K$. Then the radical $\rad (KE)$ is the vector space generated by the set of paths without return
    \begin{equation*}
        \{\lambda \in \PP(E) \colon \len(\lambda) \geq 1, \nexists \ \mu \in \PP(E), s(\mu) = r(\lambda), r(\mu) = s(\lambda)\}.
    \end{equation*}
\end{proposition}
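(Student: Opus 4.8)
The plan is to show both inclusions, using Jacobson's quasi-regularity description of the radical recalled just above: $\rad(KE)$ consists of those $z$ such that $azb$ is quasi-regular for all $a,b\in KE$. Write $W$ for the span of the paths without return, i.e. paths $\lambda$ of length $\geq 1$ such that no path goes from $r(\lambda)$ back to $s(\lambda)$. First I would check that $W$ is a two-sided ideal: if $\lambda$ has no return and $\mu$ is any path composable with it (on either side), then $\mu\lambda$ or $\lambda\mu$ still has no return, since a return path for the product would, after composing with $\mu$, furnish a return path for $\lambda$ itself; hence $W$ is an ideal, and in particular each element of $W$ is a sum of paths without return all of which lie in $W$.

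For the inclusion $W\subseteq\rad(KE)$, the key point is that $W$ is a \emph{nil} ideal of bounded-free type in the following sense: although $KE$ has no identity, any element $z\in W$ is a finite sum of paths without return, and I claim $z$ is nilpotent. Indeed, consider the finite subgraph $G$ consisting of the vertices and edges occurring in those paths; a return-free path cannot be extended indefinitely within products of return-free paths because concatenating return-free paths strictly increases length while staying inside the finite set of ``reachable'' vertices, and the absence of returns forbids revisiting a vertex that lies on a cycle — more carefully, one argues that $z^N=0$ for $N$ larger than the number of vertices of $G$, since a nonzero monomial in $z^N$ would be a path of length $\geq N$ all of whose length-$\geq 1$ ``syllables'' are return-free, forcing a repeated vertex and hence a closed subpath, and a closed subpath based at a vertex on one of the syllables would give that syllable a return. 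Thus $W$ is a nil ideal, every nil ideal is quasi-regular (for $z$ nilpotent, $z'=-z-z^2-\cdots-z^{m-1}$ is a quasi-inverse), and a nil ideal is contained in the radical; so $W\subseteq\rad(KE)$.

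For the reverse inclusion $\rad(KE)\subseteq W$, I would argue that no element outside $W$ can lie in the radical. Take $z\in\rad(KE)$ and write $z=\sum_i k_i\lambda_i + w$ with $w\in W$, $k_i\in K^\times$, and $\lambda_i\notin W$ pairwise distinct (the $\lambda_i$ may have length $0$, i.e.\ be vertices). If some vertex $u=s(\lambda_{i_0})$ appears, multiply on the left by $u$ and on the right by a suitably chosen vertex $v$ to isolate, inside $uzv$, the monomials of a fixed source and range; since $\lambda_{i_0}\notin W$ there is a return path $\mu$ from $r(\lambda_{i_0})$ to $s(\lambda_{i_0})$, and then for an appropriate $a,b$ (built from such vertices and from $\mu$) the element $azb$ contains a monomial of the form $c^{m}$ with $c$ a genuine closed path, with nonzero coefficient and of top length among the monomials present; such an element cannot be quasi-regular, because $c^m$ generates a polynomial subalgebra $K[c^m]$ (no identity) in which $azb$ would need a quasi-inverse, and a nonzero element of $K[x]x$ is never quasi-regular — equivalently, $1-azb$ would have to be invertible in the unitization, impossible by a degree/leading-term comparison. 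This contradicts $z\in\rad(KE)$, so no $\lambda_i\notin W$ can occur and $z\in W$.

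The main obstacle I expect is the bookkeeping in the reverse inclusion: choosing the multipliers $a,b$ so that $azb$ is \emph{exactly} controlled — i.e.\ so that the return-free ``noise'' terms $w$ and the other $\lambda_i$ do not interfere, and so that what survives is honestly a nonzero polynomial without constant term in some closed path $c$. This is precisely the kind of leading-term argument used in the proofs of Propositions \ref{prop_caracterizacion_semiprima} and \ref{prop_caracterizacion_prima} (linear independence of distinct paths, with length grading), so I would lean on the $\mathbb{Z}$-grading by length and on the corner description $uAu\cong\A_K(X_u)$ from Lemma \ref{prop_uAu_Ass} to make the ``not quasi-regular'' step clean. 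Everything else — $W$ an ideal, $W$ nil, nil $\Rightarrow$ quasi-regular $\Rightarrow$ inside the radical — is routine.
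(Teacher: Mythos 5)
Your overall strategy is sound, but two of your justifications need repair, and the reverse inclusion is only complete once you commit to the corner/free-algebra argument you mention at the very end.

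For $W\subseteq\rad(KE)$ you argue that $W$ is a nil ideal; the paper instead observes directly that $(a\lambda b)\circ(-a\lambda b)=a\lambda b\,a\lambda b=0$, because $\lambda\tau\lambda=0$ for every path $\tau$ when $\lambda$ has no return — each $a\lambda b$ has square zero, so it is quasi-regular on the spot. Both routes work, but your nilpotency justification is imprecise: a return-free path can perfectly well contain a closed subpath (take $u\xrightarrow{e}v$, a loop $f$ at $v$, and $v\xrightarrow{g}w$ with no path from $w$ to $u$; then $efg$ is return-free), so ``repeated vertex $\Rightarrow$ closed subpath $\Rightarrow$ some syllable has a return'' fails when the repetition is interior to a single syllable. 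The argument is saved by looking only at the $N+1$ junction vertices $s(\lambda_{i_1}),\,r(\lambda_{i_1})=s(\lambda_{i_2}),\dots$: if $v_p=v_q$ with $p<q$, then $\lambda_{i_{p+2}}\cdots\lambda_{i_q}$ (the trivial path at $v_p$ if $q=p+1$) is a return for $\lambda_{i_{p+1}}$. With that correction $W$ is nil, hence quasi-regular, hence contained in the radical.

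For $\rad(KE)\subseteq W$ the ``not quasi-regular'' step is the genuine weak point as written. A quasi-inverse of $azb$ lives in $KE$, not in the subalgebra $K[c^m]$, so you cannot test quasi-regularity inside $K[c^m]$; and a leading-term comparison in the unitization of all of $KE$ does not work either, because top-length components of two elements can multiply to zero (distinct paths need not compose), so $1-azb$ being non-invertible does not follow from a degree count there. The correct fix is the corner reduction: after cutting by vertices, every surviving $\lambda_i$ runs from $u$ to $v$ and has a return $\mu$, and $\sum_i k_i\lambda_i\mu$ is a nonzero element of $\rad(KE)\cap uKEu=\rad(uKEu)$; since $uKEu\cong\A_K(X_u)$ is a free associative algebra (or $K$, or $K[x]$), whose only units are nonzero scalars, it is semiprimitive and the element must vanish — a contradiction. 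That is exactly the paper's argument, so once you replace the $K[c^m]$ reasoning by this one, your proof coincides with the paper's for this inclusion.
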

\begin{proof}
Consider $\lambda$ a path without return. We have to prove that $a\lambda b$ is quasi-regular for any $a,b\in KE$. We have $(a\lambda b)\circ(-a\lambda b)=a\lambda ba\lambda b$ and if we write $ba=\sum_i k_i\tau_i$ with $k_i\in K$ and $\tau_i\in\PP(E)$, we have $\lambda ba\lambda=\sum_i k_i \lambda\tau_i\lambda=0$ since $\lambda$ is a path without return. Thus $a\lambda b$ is quasi-regular implying $\lambda\in\rad(KE)$. Consequently, $\rad(KE)$ contains the linear span of all no-return paths.

 Conversely, assume that a linear combination of paths is in $\rad(KE)$. Those paths in the linear combination without return are in $\rad(KE)$ so assume that we have in $\rad(KE)$ a linear combination of paths all of them with return. Let $z:=\sum_i k_i\lambda_i$ be a such combination. Since $z$ is a finite sum $z = \sum_{u,v} uzv$
where $u,v\in E^0$, there is no loss of generality assuming that $z:=\sum_i k_i\lambda_i\in\rad(KE)$ where $s(\lambda_i)=u$ and $r(\lambda_i)=v$ for every $i$. Furthermore, given that the $\lambda_i$ are paths with return we may assume even that 
$z:=\sum_i k_i\lambda_i\in\rad(KE)$ with $s(\lambda_i)=r(\lambda_i)=u$ for each $i$. But then, taking into account Proposition \cite[1, p. 48]{jacobson1956structure}, $z\in uKEu\cap\rad(KE)=\rad(uKEu)=0$, because $uKEu$ is a free associative algebra (Lemma \ref{prop_uAu_Ass}) and it is left primitive (\cite[Proposition 11.23]{lam1991first}) hence its radical is zero (even if the number of generators of the free algebra is less than or equal to $1$, because in this case the algebra is $K$ or $K[x]$ whose radical is zero).
\end{proof}

Although we already have obtained a characterisation of the Jacobson radical of a path algebra in terms of its generators as a vector space, we can go deeper and describe it in terms of its generators as an ideal. This is shown in the following theorem. 

\begin{theorem}\label{teo_radical}
    Consider a directed graph $E$ and a field $K$. If we consider the path algebra $KE$, then $\rad(KE)$ is the two-sided ideal generated by the set of edges without return, that is,
    \begin{equation*}
        X = \{e \in E^1 \colon \nexists \lambda \in \PP(E) \text{ with } s(\lambda) = r(e), r(\lambda) = s(e)\}
    \end{equation*}
\end{theorem}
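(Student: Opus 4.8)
The plan is to show the two-sided ideal $(X)$ generated by the no-return edges coincides with $\rad(KE)$, which by Proposition~\ref{prop_radical_path} equals the linear span $N$ of all no-return paths. One inclusion is nearly immediate: every edge $e \in X$ is itself a no-return path, so $e \in N = \rad(KE)$, and since $\rad(KE)$ is a two-sided ideal we get $(X) \subseteq \rad(KE)$.

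For the reverse inclusion I would take an arbitrary no-return path $\lambda = f_1 f_2 \cdots f_n$ (with $n \geq 1$) and argue that at least one of its edges $f_i$ lies in $X$; then $\lambda = (f_1\cdots f_{i-1}) f_i (f_{i+1}\cdots f_n) \in (X)$, and since such $\lambda$ span $\rad(KE)$ we are done. So the core claim is: \emph{if no edge of $\lambda$ is a no-return edge, then $\lambda$ has return.} Suppose each $f_i$ has return, i.e. for each $i$ there is a path $\gamma_i$ with $s(\gamma_i) = r(f_i)$ and $r(\gamma_i) = s(f_i)$. I want to stitch these together to build a path from $r(\lambda) = r(f_n)$ back to $s(\lambda) = s(f_1)$. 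The natural approach is downward induction on $i$: starting from $\gamma_n$ (a path from $r(f_n)$ to $s(f_n) = r(f_{n-1})$), prepend nothing and then use $\gamma_{n-1}$? — more carefully, I build a path $\delta_i$ from $r(f_n)$ to $s(f_i)$ for $i = n, n-1, \ldots, 1$: set $\delta_n = \gamma_n$, and given $\delta_{i+1}$ from $r(f_n)$ to $s(f_{i+1}) = r(f_i)$, concatenate $\delta_{i+1}$ with $\gamma_i$ (which runs from $r(f_i)$ to $s(f_i)$) to get $\delta_i$ from $r(f_n)$ to $s(f_i)$. Then $\delta_1$ is a path from $r(\lambda)$ to $s(f_1) = s(\lambda)$, so $\lambda$ has return, contradiction.

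The main obstacle — really the only subtlety — is making sure the concatenations in the induction are legitimate paths, i.e. that the ranges and sources match up at each splice; this is exactly where the definition $r(f_i) = s(f_{i+1})$ for a path is used, together with the defining properties of the witnessing paths $\gamma_i$. One should also note the trivial-path edge case is excluded since no-return paths have length $\geq 1$ by definition, and that the argument does not require the graph to be connected or finite. I would present the downward induction explicitly but keep the routine index-chasing brief.
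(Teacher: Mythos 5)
Your proposal is correct and follows essentially the same route as the paper: both inclusions are handled identically, and your downward induction building $\delta_1=\gamma_n\gamma_{n-1}\cdots\gamma_1$ is just an explicit unwinding of the paper's concatenation $\mu=\mu_n\mu_{n-1}\cdots\mu_1$ used to derive the contradiction. No gaps.
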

\begin{proof}
        Consider $I =  (X)$ the ideal generated by the set of edges $X$. First of all, Proposition \ref{prop_radical_path} directly implies that $I \subseteq \rad(KE)$. 

        Conversely, if we prove that every path without return is in $I$, then $\rad(KE) \subseteq I$ and the proof is complete. If we consider $\lambda$ a no return path with $\lambda = f_1f_2\cdots f_n \in \PP(E)$ such that there is no path $\mu \in \PP(E)$ with $s(\mu) = r(\lambda)$ and $r(\mu) = s(\lambda)$, then there is an edge $f_i$ with $i \in \{1,\ldots, n\}$ such that $f_i$ is a no return edge. If this were not true, then for every $i=1,2,\ldots,n$ there would exist a path $\mu_i \in \PP(E)$ such that $s(\mu_i) = r(f_i)$ and $r(\mu_i) = s(f_i)$. In this way, we could consider the path $\mu = \mu_n \mu_{n-1}\cdots \mu_1 \in \PP(E)$ which is a return path for $\lambda$ and this can not be possible.  In particular, $\lambda \in I$.    
\end{proof}

\begin{remark}
    If $E$ is a finite graph without cycles (a quiver) we denote $KE_+$ as the ideal generated by $E^1$. Recall that $KE_+ \cong \rad(KE)$.
\end{remark}

The fact that the Jacobson radical of a path algebra is an ideal generated by edges is really interesting. This means that the algebra $KE/\rad(KE)$ is isomorphic to a path algebra of a new graph. Furthermore, $KE/\rad(KE)$ is a semiprime algebra, in consequence, we have that, following Theorem \ref{teorema_estructura_semiprima},
\begin{equation}\label{descomp}
    KE/ \rad(KE) \cong\bigoplus_{i \in I} Kv_i  \oplus \bigoplus_{j \in J} KC_{n_j} \oplus \bigoplus_{l \in L} KF_l
\end{equation}
 where $v_i,C_{n_j}$ and $F_l$ are the strongly connected components of the graph $E$. In particular, $KF_l$ are primitive, $KC_{n_j}$ are prime and $Kv_i$ are simple path algebras.

As we know from Proposition \ref{prop_noeth_formal_triangular}, the Noetherian path algebras (left or right) are isomorphic to a upper triangular matrix algebras. In order to study its corresponding Jacobson radical we obtain the following result. 

\begin{proposition}\label{prop_radical_noetheriano}
Let us consider a directed graph $E$ with $A = KE$ its corresponding path algebra over the field $K$. If $A$ is a left Noetherian path algebra, following the description given in Proposition \ref{prop_noeth_formal_triangular}, the Jacobson radical of $A$ is
\begin{equation*}
    \rad(A) = \left ( \begin{array}{cc}
        0 & B \\
        0 & \rad(T)
    \end{array}\right )
\end{equation*}
with $\rad(T) \cong K\Sigma(E)_+$. Analogously, if $A$ is right Noetherian, then 
\begin{equation*}
    \rad(A) = \left ( \begin{array}{cc}
        \rad(S) & B \\
         0 & 0
    \end{array}\right )
\end{equation*}
with $\rad(S) \cong K\Sigma(E)_+$.
\end{proposition}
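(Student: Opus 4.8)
The plan is to exploit the triangular description $A\cong\left(\begin{smallmatrix}S&B\\0&T\end{smallmatrix}\right)$ from Proposition~\ref{prop_noeth_formal_triangular} together with the explicit generator-description of $\rad(KE)$ from Theorem~\ref{teo_radical}. Recall that in the left noetherian case $S=e_1Ae_1$ is the (block-diagonal) direct sum of the path algebras of the cycles of $E$ and $T=e_2Ae_2\cong K\Sigma(E)$, where $\Sigma(E)$ is the skeleton (Definition~\ref{def_innertree}); moreover $\Sigma(E)$ is a finite acyclic graph, so by the Remark following Theorem~\ref{teo_radical} we have $\rad(T)\cong K\Sigma(E)_+$, the ideal generated by $\Sigma(E)^1$. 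The first step is therefore to identify, among the edges of $E$, which ones are \emph{edges without return} in the sense of Theorem~\ref{teo_radical}. Since every cycle has no entries, an edge $e$ lying on a cycle admits a return path (go around the cycle), so $e\notin X$; any other edge either connects a cycle vertex to a non-cycle vertex, or connects two non-cycle vertices, and in both cases no return path exists because a non-cycle vertex can never be reached from anywhere it points forward to (acyclicity of $\Sigma(E)$ plus the no-entry hypothesis). Hence $X$ consists exactly of the edges of $E$ not lying on any cycle, i.e. precisely the edges that survive in $\Sigma(E)$.

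Next I would compute the Peirce blocks of $I:=\rad(KE)=(X)$ against the idempotents $e_1,e_2$. Because every edge of $X$ has its source or its target (or both) outside $S^0$ (the set of cycle vertices), and because there are no paths from $F^0$ back into $c^0$ for any cycle $c$, a path that passes through an edge of $X$ can never return to a cycle vertex after leaving the cycle part; consequently $e_1 I e_1=0$, which gives the upper-left $0$. For the off-diagonal block, any path in $B=e_1Ae_2$ goes from a cycle vertex to a non-cycle vertex, hence must traverse at least one edge of $X$ (the first edge that leaves the cycle region), so $B\subseteq I$; combined with $I\subseteq \rad(KE)$ and the fact that $\rad$ respects the Peirce decomposition of a triangular algebra, we get $e_1Ie_2=B$. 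Finally $e_2 I e_2$ is the ideal of $T\cong K\Sigma(E)$ generated by the images of $X$, which are exactly the edges of $\Sigma(E)$; so $e_2 I e_2=K\Sigma(E)_+\cong\rad(T)$. Assembling these three blocks yields the claimed matrix form. The right noetherian case is the mirror image: now $S\cong K\Sigma(E)$ and $T$ is the sum of cycle algebras, every cycle has no exits, and the same argument with sources and targets interchanged gives $\rad(A)=\left(\begin{smallmatrix}\rad(S)&B\\0&0\end{smallmatrix}\right)$ with $\rad(S)\cong K\Sigma(E)_+$.

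The step I expect to be the main obstacle is the bookkeeping in the claim $e_1Ie_2=B$ together with $e_1Ie_1=0$: one must argue carefully that \emph{every} path from a cycle vertex to a non-cycle vertex actually contains an edge of $X$ (not merely that it leaves the cycle), and that no path contributing to $e_1Ae_1$ can pass through such an edge, both of which rest essentially on the no-entry hypothesis forbidding any route back into a cycle. A clean way to organise this is to note that the decomposition of $E^1$ into ``cycle edges'' and $X$ is exactly the decomposition used implicitly in the proof of Proposition~\ref{prop_noeth_formal_triangular} and to track how multiplication by an element of $X$ acts on the path basis, using the observation already recorded there that $e_2Ae_1=0$. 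Once that is in place, the identification $\rad(T)\cong K\Sigma(E)_+$ is immediate from the Remark after Theorem~\ref{teo_radical}, since $T\cong K\Sigma(E)$ with $\Sigma(E)$ finite and acyclic, and there is nothing further to prove.
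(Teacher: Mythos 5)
Your argument is correct, but it takes a genuinely different route from the paper's. The paper's proof is essentially a citation: it invokes a general formula for the Jacobson radical of a formal triangular matrix ring, namely $\rad\left(\begin{smallmatrix}S&B\\0&T\end{smallmatrix}\right)=\left(\begin{smallmatrix}\rad(S)&B\\0&\rad(T)\end{smallmatrix}\right)$, and then applies Proposition~\ref{prop_radical_path} to conclude that $\rad(S)=0$ (no path inside a cycle algebra $KC_{n_i}$ is a no-return path) and $\rad(T)\cong K\Sigma(E)_+$. You instead compute the Peirce blocks $e_iIe_j$ of $I=\rad(KE)=(X)$ directly from Theorem~\ref{teo_radical}, and your bookkeeping goes through: under the no-entry hypothesis every path ending at a cycle vertex consists entirely of edges of that cycle, so $e_1Ae_1=S$ contains no edge of $X$ and $e_1Ie_1=0$; every path from a cycle vertex to a non-cycle vertex must traverse a no-return edge, so $B\subseteq I$ and $e_1Ie_2=B$; and $e_2Ie_2$ is the ideal of $T\cong K\Sigma(E)$ generated by the skeleton's edges. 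What your approach buys is self-containedness (no appeal to the external result on formal triangular matrix rings); what the paper's buys is brevity and the avoidance of having to re-describe $X$. One small slip you should correct: $X$ is the set of edges not lying on any cycle, which is in general \emph{strictly larger} than $\Sigma(E)^1$, since an edge leaving a cycle belongs to $X$ but is deleted when the skeleton is formed (such edges land in the $B$ block, exactly as your own block computation shows); the clause ``i.e.\ precisely the edges that survive in $\Sigma(E)$'' should therefore be deleted, while the statement that $e_2Xe_2=\Sigma(E)^1$ is the one you actually use and is correct.
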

\begin{proof}
These results are immediate to prove thanks to Proposition \ref{prop_noeth_formal_triangular} and Corollary \cite[II.1]{radical_formal_triangular} which implies that 
\begin{equation*}
    \rad \left ( \begin{array}{cc}
        S & B \\
        0 & T
    \end{array}\right ) = \left ( \begin{array}{cc}
        \rad(S) & B \\
        0 & \rad(T)
    \end{array}\right ).  
\end{equation*}
Following Proposition \ref{prop_radical_path} we prove the rest. 
\end{proof}

Finally, after all the results obtained above, we are in condition to develop an structure theorem for the Noetherian (left and right) path algebras. We will make use of the concepts of socle of a path algebra (Proposition \ref{prop_zocalo_pathalgebra}) the radical of a path algebra (Theorem \ref{teo_radical}), together with Proposition \ref{prop_noeth_formal_triangular}.

\begin{theorem}[\bf Structure Theorem for left Noetherian path algebras]\label{teorema_structurenoetheriana}  If $E$ is a directed graph with $KE$ left (resp. right) Noetherian, then

\begin{equation*}
    KE/ \rad(KE) \cong\bigoplus_{i =1}^{n_0} Kv_i  \oplus \bigoplus_{j = 1}^s KC_{n_j}.
\end{equation*}
Let us consider two directed graphs $E$ and $F$, and denote by $s$ and $t$ the number of cycles in the graph $E$ and $F$ respectively and $n_i,m_j$ (for $i=1,\ldots,s$ and $j=1,\ldots,t$) the number of vertices in the corresponding cycle. If $KE$ and $KF$ are left (resp. right) Noetherian and $KE \cong KF$, then $|E^0| = |F^0|$, $s = t$ and there is a permutation $\sigma \in S_s$ that $ n_i = m_{\sigma(i)} \text{ for } i=1,\ldots, s$.
\end{theorem}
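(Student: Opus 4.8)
The plan is to reduce everything to the structure results already established, by passing to the semiprime quotient $KE/\rad(KE)$ and exploiting that the Jacobson radical, the centre, and the central closure are isomorphism invariants. Throughout I read ``noetherian'' one-sidedly and ``$\cong$'' as an isomorphism of $K$-algebras; the right-handed statements are obtained by the dual argument.

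\emph{The first displayed isomorphism.} By Theorem~\ref{teo_radical} the ideal $\rad(KE)$ is generated by the no-return edges, so Proposition~\ref{prop_colapso} identifies $KE/\rad(KE)$ with a path algebra $K\hat E$; being a quotient by the radical it is semiprime, and being a quotient of a noetherian ring it is noetherian. Hence the semiprime decomposition~\eqref{descomp} applies, giving $KE/\rad(KE)\cong\bigoplus_i Kv_i\oplus\bigoplus_j KC_{n_j}\oplus\bigoplus_l KF_l$ with every $KF_l$ primitive, and the crux is that no primitive summand can occur. By Proposition~\ref{prop_noetheriano} every cycle of $\hat E$ has no entries, so I would invoke the lemma: \emph{a strongly connected graph in which no cycle has an entry is a single vertex or a single cycle}. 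Indeed, in such a graph with at least two vertices every vertex lies on a cycle (a shortest positive-length closed walk through it cannot repeat a vertex, hence is a cycle), whence the no-entry hypothesis forces in-degree $1$ at every vertex; counting edges then forces out-degree $1$ everywhere, and a connected graph with all in- and out-degrees $1$ is a single cycle. Since $KC_n$ is not primitive — the cycle $C_n$ has no exit inside $C_n$, against Condition~$(L)$ in Theorem~\ref{teorema_primitiva} — the strongly connected components of $\hat E$ are only vertices and cycles, so $L=\emptyset$ and the first claim follows.

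\emph{The invariants and the vertex count.} Assume $KE\cong KF$. Isomorphisms carry the radical onto the radical, so $A:=KE/\rad(KE)\cong KF/\rad(KF)=:B$; write their decompositions from the first part with data $(n_0;n_1,\dots,n_s)$ and $(m_0;m_1,\dots,m_t)$. Each summand is a directly indecomposable unital ring (a copy of $K$, which is a field, or a prime algebra $KC_n$, which has no nontrivial central idempotents), and distinct summands annihilate one another, so $A$ and $B$ are finite direct products of indecomposable rings. Such a decomposition is unique up to permutation and isomorphism — the identity elements of the factors are precisely the minimal nonzero central idempotents — so the multiset $\{Kv_i\}\cup\{KC_{n_j}\}$ agrees, up to $K$-algebra isomorphism, with $\{Kw_i\}\cup\{KC_{m_j}\}$. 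To extract the numbers I would distinguish the factors by the pair (centre, rank over the centre): the summands $Kv_i$ are exactly the finite-dimensional ones and have centre $\cong K$, so $n_0=m_0$; the remaining factors have centre $\cong K[x]$ (Theorem~\ref{teo_centroide}), and from the Peirce decomposition $KC_n=\bigoplus_{i,j}K[c_i]\mu_{i,j}$ one reads off that $KC_n$ is free of rank $n^2$ over its centre $K[x]$ (the central element $\sum_i c_i$ acts making each $K[c_i]\mu_{i,j}$ free of rank one). Comparing these ranks yields $s=t$ and a permutation $\sigma\in S_s$ with $n_i^2=m_{\sigma(i)}^2$, hence $n_i=m_{\sigma(i)}$. (Equivalently one may pass to central closures: $\widehat A\cong K^{n_0}\times\prod_j\MM_{n_j}(K(x))$ by Theorem~\ref{teorema_clausura_central_ciclo} and the fact that $\widehat{(\ )}$ commutes with direct sums, and then apply the uniqueness in Wedderburn--Artin to this semisimple $K$-algebra.) Finally, collapsing $E$ through edges does not change $E^0$, and $\hat E$ is the disjoint union of $n_0$ isolated vertices and $s$ cycles of lengths $n_1,\dots,n_s$, so $|E^0|=n_0+\sum_{j=1}^s n_j$; the identical computation for $F$, together with the equalities just proved, gives $|E^0|=|F^0|$.

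\emph{Main obstacle.} The only genuine difficulty is the rigidity in the second part — converting an abstract ring isomorphism $A\cong B$ into numerical data. The heart of the matter is that $KC_n\not\cong KC_m$ for $n\ne m$, and this is precisely where a true invariant is needed: the rank over the centre, equivalently the degree $n$ of the central closure $\MM_n(K(x))$, or the minimal degree $2n$ of a standard identity satisfied by $KC_n$. The uniqueness of the decomposition into indecomposable ideals is routine but should be stated with some care in the non-unital category; it is available here precisely because a noetherian path algebra has finitely many connected components, each of which carries a unit.
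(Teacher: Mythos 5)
Your proof is correct, and it follows the same overall strategy as the paper --- pass to the semiprime quotient $KE/\rad(KE)$, apply the structure theorem for semiprime path algebras, and then match up the summands --- but both halves are executed differently enough to be worth comparing. For the first display the paper simply declares the absence of primitive summands ``immediate'' from Proposition \ref{prop_noetheriano}; you supply the missing graph-theoretic lemma (a finite strongly connected graph in which no cycle has an entry is a single vertex or a single cycle, via the in-degree/out-degree count), which genuinely fills a gap in the exposition. For the rigidity statement the paper works with the triangular presentation of Proposition \ref{prop_noeth_formal_triangular}, separates the $K^{n_0}$ part by computing the socle of $\overline{A}$, and then distinguishes the cycle algebras by passing to the central closures $\MM_{n_i}(K(x))$ and invoking the uniqueness in Wedderburn--Artin; you instead use the uniqueness of the decomposition of a unital ring into indecomposable blocks (via minimal central idempotents) and tell $KC_n$ apart from $KC_m$ by its rank $n^2$ as a free module over its centre $K[x]$, read off from the Peirce decomposition. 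Your invariant is more elementary --- no extended centroid or central closure is needed --- whereas the paper's route ties in with its earlier computation of $\widehat{KC_n}$; both are valid, and you correctly record the central-closure argument as the alternative. Finally, your explicit count $|E^0|=n_0+\sum_j n_j$, which implicitly uses that the no-entry condition makes distinct cycles vertex-disjoint, makes precise a step the paper leaves unstated.
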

\begin{proof}
Making use of Equation \eqref{descomp} and Proposition \ref{prop_noetheriano} the first part is immediate.

Let us consider $A_1= KE$ and $A_2=KF$ left Noetherian algebras (for the right Noetherian algebras is analogous). Thanks to Proposition \ref{prop_noeth_formal_triangular} we have
\begin{equation*}
    A_1\cong \left (\begin{array}{cc}
        \bigoplus_{i=1}^s KC_{n_i} & B_1 \\
        0 & T_1 
    \end{array}\right ) \cong \left (\begin{array}{cc}
        \bigoplus_{j=1}^t KC_{m_j} & B_2 \\
        0 & T_2 
    \end{array}\right ) \cong A_2.
\end{equation*}
Following Proposition \ref{prop_radical_noetheriano} and Theorem \cite[0.1.4]{kasch1982modules} we have that $\rad(A_1)\cong \rad(A_2)$ and
\begin{equation*}
    \rad(A_1) \cong \left (\begin{array}{cc}
        0 & B_1 \\
        0 & \rad(T_1) 
    \end{array}\right ) \cong \left (\begin{array}{cc}
        0 & B_2 \\
        0 & \rad(T_2) 
    \end{array}\right ) \cong \rad(A_2).
\end{equation*}
If we consider $\overline{A_1} = A_1/\rad(A_1)$ and $\overline{A_2}= A_2/\rad(A_2)$ we have $\overline{A_1}\cong \overline{A_2}$ which means
\begin{equation*}
    \overline{A_1} \cong \left (\begin{array}{cc}
        \bigoplus_{i=1}^s KC_{n_i} & 0 \\
        0 & K^{n_0}
    \end{array}\right ) \cong \left (\begin{array}{cc}
        \bigoplus_{j=1}^t KC_{m_j} & 0 \\
        0 & K^{m_0}
    \end{array}\right ) \cong \overline{A_2}
\end{equation*}
with $n_0$ the number of vertices of $\Sigma(E)$ and $m_0$ the number of vertices of $\Sigma(F)$. Again, by Theorem \cite[9.1.4]{kasch1982modules}, we have that $\soc(\overline{A_1})\cong \soc(\overline{A_2})$ with $\soc(\overline{A_1}) = K^{n_0}$ and $\soc(\overline{A_2}) = K^{m_0}$  (by Corollary \ref{corolario_zocalo_pathalgebra}) which means that $n_0=m_0$. Then, we have that $\bigoplus_{i=1}^s KC_{n_i} \cong  \bigoplus_{j=1}^t KC_{m_j}$. 
By Wedderburn-Artin Theorem \cite[p. 204]{jacobson2012basic} we have that $s = t$ and there is a permutation $\sigma \in S_t$ such that $n_i = m_{\sigma(i)}$. 
\end{proof}

In order to describe the structure of left Noetherian path algebras $KE$, we first approach its quotient $\overline{KE}=KE/\rad(KE)$. Equivalently, we consider a left Noetherian Jacobson semisimple path algebra $KE$. Then  $KE=\soc(KE)\oplus\ann(\soc(KE))$ and the socle is a finite direct sum $K^n$ (with componentwise product). 
A first invariant under isomorphism is the pair
$(\soc(KE),\ann(\soc(KE))$. We have $\ann(\soc(KE))=\oplus_i KC_{n_i}$ (finite direct sum).
Since the central closure of each $KC_{n_i}$ is 
$\widehat{KC_{n_i}}\cong M_{n_i}(K(x))$ we have that the central closure of $\ann(\soc(KE))$ is an Artinian semisimple $K(x)$-algebra and, in fact, is isomorphic to 
the (finite) direct sum $\oplus_{i}M_{n_i}(K(x))$. 
Therefore, an isomorphism of left Noetherian J-semisimple path algebras $KE\cong KF$ directly leads to an isomorphism of Artinian semisimple $K(x)$-algebras. Consequently the tuples $(n_1,\ldots, n_s)$, $(m_1,\ldots, m_t)$ of both algebras have the same length and up to a permutation the $n_i$'s and the $m_j$'s coincide. Summarizing: the tuple
$(n,n_1,\ldots,n_k)$ determine completely $KE$ and 
$KE=K^n\oplus KC_{n_1}\oplus\cdots\oplus KC_{n_k}$. Furthermore, $n=\dim_K(\soc(KE))$ and each $n_i^2$ is the dimension (as $K(x)$-vector space) of the simple components of the Artinian semisimple part $\widehat{\ann(\soc(KE))}$. Alternatively, one can see that each $n_i$ is the uniform dimension of $KC_{n_i}$.

\section{Declarations}

\subsection*{Ethical Approval:}

This declaration is not applicable.

\subsection*{Conflicts of interests/Competing interests:} We have no conflicts of interests/competing interests to disclose.

\subsection*{Authors' contributions:}

All authors contributed equally to this work. 

\subsection*{Data Availability Statement:} The authors confirm that the data supporting the findings of this study are available within the article.
\vskip 1cm

\section*{Acknowledgements}{The authors are supported by the Spanish Ministerio de Ciencia e Innovaci\'on   through projects  PID2019-104236GB-I00 and PID2023-152673NB-I00 and by the Junta de Andaluc\'{i}a  through projects  FQM-336 and UMA18-FEDERJA-119,  all of them with FEDER funds. The third author is supported by a Junta de Andalucía PID fellowship no. PREDOC\_00029.}

\bibliographystyle{plain}
\bibliography{ref}

\end{document}